\documentclass[11pt]{article}

\usepackage[margin=1in]{geometry}
\usepackage{amsmath, amsfonts, amsthm, amssymb}
\usepackage{mathrsfs}
\usepackage{hyperref}
\usepackage{tikz}
\usetikzlibrary{arrows.meta,calc,patterns}
\usepackage{booktabs}   

\hypersetup{
  colorlinks=true,
  linkcolor=blue,
  citecolor=blue,
  urlcolor=blue,
}

\newcommand{\NP}{\Gamma_+}

\DeclareMathOperator{\Exc}{Exc}

\newtheorem{theorem}{Theorem}[section]
\newtheorem{proposition}[theorem]{Proposition}
\newtheorem{corollary}[theorem]{Corollary}
\newtheorem{lemma}[theorem]{Lemma}
\theoremstyle{definition}
\newtheorem{definition}[theorem]{Definition}
\newtheorem{remark}[theorem]{Remark}
\newtheorem{example}[theorem]{Example}
\newtheorem{hypothesis}[theorem]{Hypothesis}

\title{Geometric Criteria for Morrey Admissibility via the Real Log-Canonical Threshold}

\author{
Nivaldo Grulha\thanks{Instituto de Ci\^encias Matem\'aticas e de Computa\c{c}\~ao (ICMC), Universidade de S\~ao Paulo, S\~ao Carlos, Brazil.}
\and
Andr\'ea Prokopczyk\thanks{Universidade Estadual Paulista (UNESP), S\~ao Jos\'e do Rio Preto, Brazil.}
}

\date{}

\begin{document}

\maketitle

\begin{abstract}
This paper studies the local integrability of the gradient of singular
interaction kernels in aggregation equations, with a view toward the
well-posedness theory in Morrey spaces of
\cite{suleiman2020aggregation,suleiman2023existence}. The velocity field is
a nonlocal convolution involving $\nabla K$. Near the singularity we take
$\nabla K$ to be analytically comparable, in the two-sided sense, to
$|\nabla f|\,|f|^{-(\kappa+1)}$ for some $\kappa>0$ and some real-analytic
$f$ vanishing at the origin; the model case is $K=|f|^{-\kappa}$.

The exact local integrability threshold of $\nabla K$ admits a complete
geometric description under this assumption:
\[
\nabla K \in L^p_{\mathrm{loc}}
\quad\Longleftrightarrow\quad
p < p^*(\kappa):=\min_i \frac{a_i+1}{(\kappa+1)\nu_i - M_i},
\]
where $\nu_i$, $M_i$, $a_i$ are the vanishing orders of $f$, the vanishing
orders of its Jacobian ideal, and the discrepancy exponents of a real
log-resolution of $f$ together with its Jacobian ideal. The real
log-canonical threshold of $f$ gives a computable lower bound
$\overline p(\kappa):=\mathrm{rlct}_0(f)/(\kappa+1)$ for $p^*(\kappa)$.
Equality holds exactly when some divisor computing $\mathrm{rlct}_0(f)$ has
vanishing Jacobian order $M_i=0$---necessarily a reduced, generically
smooth component of the strict transform---so the bound is sharp precisely
in this mildly singular regime, and strict wherever the threshold is
instead governed by a divisor over a critical point of $f$, in particular
for every isolated zero in dimension at least two. Quasi-homogeneous
singularities admit the closed formula $|w|/(\kappa d + \max_i w_i)$ in
terms of the weights $w$ and weighted degree $d$; the classical Newtonian
kernel is recovered exactly within this framework.

This geometric threshold connects to well-posedness through
\cite[Thm.~3.1]{suleiman2023existence}, whose only hypothesis on the
kernel---as stated there---is the global condition
$\nabla K\in L^1(\mathbb R^n)$, independent of the Morrey scaling parameter
$\lambda$ of the solution space. That condition follows from the local
criterion $p^*(\kappa)>1$ together with a mild global truncation of $K$
away from its singularity, with $\kappa+1<\mathrm{rlct}_0(f)$ a computable
sufficient condition. For the Newtonian kernel the RLCT bound is exactly
borderline, yet the exact threshold certifies the classical theory for
every dimension $d\ge3$ and, once certified, simultaneously for the
\emph{entire} admissible range of $\lambda$---a case where the divisorial
threshold does work the RLCT alone cannot.

The proof reduces, through resolution of singularities, to a monomial
model that yields the integrability estimates directly. The resulting
criterion covers isotropic kernels of Riesz type together with anisotropic
kernels modeled on analytic divisors, and the threshold governing
well-posedness turns out to be resolution-theoretic (birational) data
attached to the singularity.
\end{abstract}

\noindent\textbf{Keywords:} Aggregation equations; Morrey spaces; Singular
interaction kernels; Nonlocal partial differential equations; Real
log--canonical threshold; Resolution of singularities; Anisotropic
singularities.

\noindent\textbf{2020 Mathematics Subject Classification:} Primary 35Q92,
35A01, 14B05; Secondary 35B65, 35B40, 32S45.

\section*{Introduction}
Nonlocal aggregation equations model chemotaxis, swarming, and related
collective phenomena, where long-range interactions are represented by
convolution terms with singular kernels. A basic example is
\begin{equation}\label{eq:agr}
\partial_t u = \Delta u - \nabla\cdot\bigl(u\,(\nabla K * u)\bigr),
\end{equation}
where \(u\) is the density and \(K\) an interaction potential. Solutions of
\eqref{eq:agr} depend on the singularity of \(K\), and specifically on the
local integrability of \(\nabla K\).

Aggregation equations of this type trace back to classical models of
chemotaxis and biological swarming. Their analysis has since developed
across several functional settings: $L^p$ spaces \cite{bertozzi2012lp},
Sobolev and measure-valued frameworks
\cite{carrillo2011global,bedrossian2015existence}, and continuum
descriptions of collective dynamics
\cite{carrillo2014derivation,topaz2004swarming,mogilner1999nonlocal}. Across
this literature, the nature of the singularity of the interaction potential
governs both the qualitative behavior of solutions and the onset of
blow-up.

Morrey spaces are a natural setting once diffusion is present, capturing
the interplay between scaling, concentration, and nonlocal effects.
Suleiman, Precioso, and Prokopczyk
\cite{suleiman2020aggregation,suleiman2023existence} used this framework to
establish global well-posedness for \eqref{eq:agr} in critical Morrey
spaces, under conditions ensuring the boundedness of the bilinear map
\[
(u,v)\longmapsto u\,(\nabla K * v)
\]
that are formulated directly in terms of local integrability and
convolution estimates for $\nabla K$. Morrey spaces have a long history in
parabolic and fluid equations, beginning with Kozono--Yamazaki and Taylor
\cite{kozono1994semilinear,taylor1992analysis} and developed within Kato's
theory of critical spaces \cite{kato1992strong}; see also
\cite{lemarierieusset2016navier,miao2011wellposedness,giga1999cauchy}.

What \cite{suleiman2020aggregation,suleiman2023existence} leaves open is
which geometric feature of the kernel fixes the precise admissibility
regime. For isotropic kernels $K(x)\sim|x|^{-\alpha}$, scaling arguments
answer this directly. Anisotropic or degenerate singularities---arising
naturally in cross-diffusion and multi-species models
\cite{carrillo2020zoology}---call for a more refined analysis.

Here we show that divisorial data from a real resolution of singularities
govern the local integrability of $\nabla K$, with the \emph{real log
canonical threshold} (RLCT) supplying a computable leading-order
approximation. Assuming
$|\nabla K| \asymp |\nabla f|/|f|^{\kappa+1}$ for some $\kappa>0$ near a
zero of a real-analytic function $f$ (model case $K=|f|^{-\kappa}$), we
obtain a complete geometric characterisation of the exact local
integrability threshold:
$\nabla K \in L^p_{\mathrm{loc}}$ if and only if
$p < p^*(\kappa):=\min_i (a_i+1)/\bigl((\kappa+1)\nu_i - M_i\bigr)$, the
minimum running over the real divisorial components of a log-resolution of
$f$ together with its Jacobian ideal; $\nu_i$ and $M_i$ are the
corresponding vanishing orders of $f$ and of the Jacobian ideal, and $a_i$
the discrepancies. The RLCT gives the computable lower bound
$\mathrm{rlct}_0(f)/(\kappa+1)$ for this threshold.

As stated in \cite[Thm.~3.1]{suleiman2023existence}, the only requirement
on the kernel is the \emph{global} condition $\nabla K\in
L^1(\mathbb R^n)$---not a local $L^p_{\mathrm{loc}}$ condition tied to the
Morrey scaling parameter $\lambda=n-p$ of the solution space. Here
$p^*(\kappa)>1$ is exactly the local condition for the singular part of the
kernel to lie in $L^1_{\mathrm{loc}}$; together with a mild global
truncation of $K$ away from its singularity (made precise in
Corollary~\ref{cor:morrey_bridge} via the condition
$(1-\chi)\nabla K\in L^1$ on the truncated far field), it yields the full
global hypothesis $\nabla K\in L^1(\mathbb R^n)$ required by
\cite[Thm.~3.1]{suleiman2023existence}, with $\kappa+1<\mathrm{rlct}_0(f)$
a computable sufficient condition; see Section~\ref{sec:morrey_bridge}. The
passage from this global $L^1$ hypothesis to the full bilinear estimate
above, and hence to well-posedness of the aggregation equation, is
established in \cite[Thm.~3.1]{suleiman2023existence} under exactly this
hypothesis; we do not reprove those bilinear estimates here.

Resolution of singularities produces the vanishing orders and discrepancies
that control the $L^p$-integrability of $\nabla K$. The RLCT captures the
leading contribution; the finer invariants $M_i$, measuring the vanishing
of the gradient along each divisor, fix the sharp threshold. On any divisor
$E_i$ lying over a critical point of $f$, $M_i>0$, so
$(\kappa+1)\nu_i-M_i<(\kappa+1)\nu_i$ and the ratio
$(a_i+1)/\bigl((\kappa+1)\nu_i-M_i\bigr)$ strictly exceeds the RLCT-based
term $(a_i+1)/\bigl((\kappa+1)\nu_i\bigr)$: the gap between $p^*(\kappa)$
and $\overline p(\kappa)$ is produced specifically by $M_i>0$, which occurs
in particular for every isolated zero in dimension at least two. The sharp
threshold is thus genuinely divisorial rather than an RLCT phenomenon.

The classical isotropic theory sits inside this picture as a special case:
the Newtonian kernel is recovered \emph{exactly} as the instance $f=|x|^2$,
$\kappa=(d-2)/2$ of the threshold formula. Related mechanisms appear in the
study of integrability of reciprocal functions
\cite{mustata2002singularities}, its real-analytic and combinatorial
refinement for hyperplane arrangements \cite{kostawindisch2026}, and the
analysis of oscillatory integrals
\cite{varchenko1976newton,collins2017real}.

Hypersurface zero sets are treated on the same footing as isolated zeros:
components of the strict transform enter the threshold alongside
exceptional divisors. Singular sets requiring genuinely stratified or
valuation-theoretic refinements---kernels whose singular behavior is not
governed by a single analytic divisor---are left for future work.

\medskip
\noindent\textbf{Structure of the paper.}
Section~\ref{sec:expanded_background} reviews the Morrey space framework of
\cite{suleiman2020aggregation,suleiman2023existence}.
Section~\ref{sec:rlct_morrey} develops the resolution-based integrability
criteria above and establishes, in Section~\ref{sec:morrey_bridge}, the
precise bridge to \cite[Thm.~3.1]{suleiman2023existence}.
Section~\ref{sec:examples} works through examples based on Newton
polyhedra, including a discussion of when the geometric threshold secures
the global $L^1$ hypothesis (Regime A of Remark~\ref{rem:three_regimes}).

\section{Analytic framework for aggregation equations in Morrey spaces}
\label{sec:morrey_framework}
\label{sec:expanded_background}

This section establishes the analytical framework used throughout the paper
to construct mild solutions of the aggregation equation.
The presentation is a synthesis of the approaches developed in
\cite{suleiman2020aggregation,suleiman2023existence}.

The Morrey space framework for parabolic equations was systematically
developed by Kozono--Yamazaki and Taylor
\cite{kozono1994semilinear,taylor1992analysis}, building on Kato's
pioneering work \cite{kato1992strong}. The scaling-invariant approach employed
here is part of a broader program treating critical spaces for evolution
equations; see \cite{lemarierieusset2016navier} for a comprehensive modern
treatment and \cite{miao2011wellposedness,giga1999cauchy} for related
techniques in critical function spaces.

\medskip
The framework relies on three ingredients:
\begin{enumerate}
\item the Duhamel formulation of the equation;
\item structural and scaling properties of Morrey-type spaces;
\item semigroup and bilinear estimates leading to a fixed-point argument.
\end{enumerate}
\subsection{Aggregation equation and mild formulation}

We consider the viscous aggregation equation
\begin{equation}\label{eq:agg}
u_t=\Delta u-\nabla\!\cdot\!\big(u(\nabla K*u)\big),
\qquad x\in\mathbb R^n,\ t>0,
\end{equation}
with initial data
\[
u(x,0)=u_0(x).
\]

The linearized problem is the heat equation
\[
u_t=\Delta u, \qquad u(0)=u_0,
\]
whose solution is given by the heat semigroup
\[
G(t)u_0=g(\cdot,t)*u_0,
\qquad
g(x,t)=(4\pi t)^{-n/2}e^{-|x|^2/(4t)}.
\]

By Duhamel's principle, a mild solution of \eqref{eq:agg}
satisfies the integral equation
\begin{equation}\label{eq:mild}
u(t)=G(t)u_0+B(u,u)(t),
\end{equation}
where the bilinear operator is
\begin{equation}\label{eq:bilinear_operator}
B(u,v)(t)
:=-\int_0^t
\nabla_x G(t-s)\big(u(\nabla K * v)\big)(s)\,ds .
\end{equation}

The negative sign in \eqref{eq:bilinear_operator} corresponds exactly to the
divergence structure of the nonlinear transport term.

\subsection{Morrey spaces}

For \(1\le p<\infty\) and \(0\le\lambda<n\),
\[
M_{p,\lambda}(\mathbb R^n)
=\{f\in L^p_{\rm loc}:\|f\|_{M_{p,\lambda}}<\infty\},
\]
with norm
\[
\|f\|_{M_{p,\lambda}}
=\sup_{x_0\in\mathbb R^n,R>0}
R^{-\lambda/p}\|f\|_{L^p(B_R(x_0))}.
\]

Important special cases:
\[
M_{p,0}=L^p,
\qquad
M_{1,0}=\mathcal M(\mathbb R^n).
\]

\paragraph{Scaling property}
\[
\|f(\alpha\cdot)\|_{M_{p,\lambda}}
=\alpha^{-\frac{n-\lambda}{p}}\|f\|_{M_{p,\lambda}}.
\]

This scaling is compatible with the invariance of the aggregation equation.

\subsection{Functional inequalities in Morrey spaces}

\begin{lemma}[H\"older inequality in Morrey spaces]
\label{lem:holder_morrey}
Let $1 \le p_1,p_2,p_3 < \infty$ and $0 \le \lambda_1,\lambda_2,\lambda_3 < n$
satisfy
\[
\frac1{p_3}=\frac1{p_1}+\frac1{p_2},
\qquad
\frac{\lambda_3}{p_3}
=\frac{\lambda_1}{p_1}+\frac{\lambda_2}{p_2}.
\]
If
\[
f \in M_{p_1,\lambda_1}(\mathbb{R}^n)
\quad \text{and} \quad
g \in M_{p_2,\lambda_2}(\mathbb{R}^n),
\]
then the product $fg$ belongs to $M_{p_3,\lambda_3}(\mathbb{R}^n)$ and
\[
\|fg\|_{M_{p_3,\lambda_3}}
\le
\|f\|_{M_{p_1,\lambda_1}}
\|g\|_{M_{p_2,\lambda_2}} .
\]
\end{lemma}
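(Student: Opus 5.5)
The plan is to fix an arbitrary ball $B_R(x_0)$ and apply the classical H\"older inequality there, then take the supremum. Since $1/p_3=1/p_1+1/p_2$ with all exponents finite, the pair $(p_1/p_3,\,p_2/p_3)$ is conjugate. Applied to $|f|^{p_3}$ and $|g|^{p_3}$ on $B_R(x_0)$, this gives
\[
\|fg\|_{L^{p_3}(B_R(x_0))}\le \|f\|_{L^{p_1}(B_R(x_0))}\,\|g\|_{L^{p_2}(B_R(x_0))}.
\]
In particular $fg\in L^{p_3}_{\rm loc}$, because the right-hand side is finite for $f\in L^{p_1}_{\rm loc}$ and $g\in L^{p_2}_{\rm loc}$.

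Next we multiply both sides by $R^{-\lambda_3/p_3}$ and use the hypothesis $\lambda_3/p_3=\lambda_1/p_1+\lambda_2/p_2$. This splits the weight exactly as $R^{-\lambda_3/p_3}=R^{-\lambda_1/p_1}R^{-\lambda_2/p_2}$, so
\[
R^{-\lambda_3/p_3}\|fg\|_{L^{p_3}(B_R(x_0))}
\le \Bigl(R^{-\lambda_1/p_1}\|f\|_{L^{p_1}(B_R(x_0))}\Bigr)\Bigl(R^{-\lambda_2/p_2}\|g\|_{L^{p_2}(B_R(x_0))}\Bigr)
\le \|f\|_{M_{p_1,\lambda_1}}\|g\|_{M_{p_2,\lambda_2}}.
\]
The right-hand side does not depend on $x_0$ or $R$, so taking the supremum over $x_0\in\mathbb R^n$ and $R>0$ gives the claimed norm inequality and shows $fg\in M_{p_3,\lambda_3}$.

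I do not expect any real obstacle. The only points needing care are bookkeeping ones. First, the constant is exactly $1$ because H\"older's inequality carries no constant. Second, $\lambda_3$ automatically lies in the range $[0,n)$, since it is a convex combination: $\lambda_3=(p_3/p_1)\lambda_1+(p_3/p_2)\lambda_2$ with weights $p_3/p_1+p_3/p_2=1$. Third, in the degenerate case where one exponent equals $p_3$, the other is forced to be infinite, and this is excluded by the hypothesis that all $p_i$ are finite.
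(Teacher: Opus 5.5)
Your argument is correct and complete. Hölder's inequality on each ball with the conjugate pair $(p_1/p_3,\,p_2/p_3)$, the exact splitting $R^{-\lambda_3/p_3}=R^{-\lambda_1/p_1}R^{-\lambda_2/p_2}$, and then a supremum over all balls is the standard proof. The paper does not prove this lemma; it recalls it as a known tool from the Morrey-space literature it cites, and your argument supplies the routine verification left implicit there.
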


\begin{lemma}[Young inequalities in Morrey spaces]
\label{lem:young_morrey}
Let $1 \le p < \infty$ and $0 \le \lambda < n$.

\medskip
\noindent
(i) If
\[
g \in L^1(\mathbb{R}^n)
\quad \text{and} \quad
f \in M_{p,\lambda}(\mathbb{R}^n),
\]
then the convolution $g*f$ belongs to $M_{p,\lambda}(\mathbb{R}^n)$ and
\[
\|g*f\|_{M_{p,\lambda}}
\le
\|g\|_{L^1(\mathbb{R}^n)}
\|f\|_{M_{p,\lambda}(\mathbb{R}^n)}.
\]

\medskip
\noindent
(ii) If $\mu$ is a finite Radon measure on $\mathbb{R}^n$ and
\[
f \in M_{p,\lambda}(\mathbb{R}^n),
\]
then the convolution $\mu*f$ belongs to $M_{p,\lambda}(\mathbb{R}^n)$ and
\[
\|\mu*f\|_{M_{p,\lambda}}
\le
|\mu|(\mathbb{R}^n)\,
\|f\|_{M_{p,\lambda}(\mathbb{R}^n)},
\]
where $|\mu|$ denotes the total variation of $\mu$.
\end{lemma}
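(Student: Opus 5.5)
The plan is to prove the two parts separately by the standard averaging argument. Throughout I fix an arbitrary ball $B_R(x_0)$ and bound the $L^p(B_R(x_0))$ norm of the convolution by $\|f\|_{M_{p,\lambda}}R^{\lambda/p}$ times the total mass of the kernel. Taking the supremum over $x_0$ and $R$ then gives the estimate. The key point is that translation does not change the radius of a ball, so the Morrey weight $R^{-\lambda/p}$ passes through the convolution unchanged.

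For (i), I write $g*f(x)=\int g(y)f(x-y)\,dy$ and apply Minkowski's integral inequality on $B_R(x_0)$:
\[
\|g*f\|_{L^p(B_R(x_0))}\le\int_{\mathbb R^n}|g(y)|\,\|f(\cdot-y)\|_{L^p(B_R(x_0))}\,dy .
\]
By translation invariance of Lebesgue measure,
\[
\|f(\cdot-y)\|_{L^p(B_R(x_0))}=\|f\|_{L^p(B_R(x_0-y))}\le R^{\lambda/p}\|f\|_{M_{p,\lambda}},
\]
and the right-hand side is uniform in $y$. Multiplying by $R^{-\lambda/p}$ and taking the supremum yields the claim.

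A preliminary point must be checked first: that $g*f$ is defined almost everywhere and measurable. Since $f$ is only in $L^p_{\rm loc}$, I would first run the same argument with $|g|*|f|$, using Tonelli together with the Minkowski inequality for nonnegative functions. This shows $|g|*|f|\in L^p(B_R(x_0))$ for every ball, so it is finite almost everywhere. Hence the integral defining $g*f$ converges absolutely almost everywhere, and the estimate for $g*f$ follows from the pointwise bound $|g*f|\le|g|*|f|$.

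For (ii), I repeat the argument with $d|\mu|(y)$ in place of $|g(y)|\,dy$. Here $\mu*f(x)=\int f(x-y)\,d\mu(y)$, and Minkowski's inequality for the finite measure $|\mu|$ applies in the same way. The translation step is unchanged, and the bound becomes $|\mu|(\mathbb R^n)\,\|f\|_{M_{p,\lambda}}$.

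The only delicate issue is measurability of $(x,y)\mapsto f(x-y)$ with respect to the product of Lebesgue measure and $|\mu|$. I would handle this with Borel representatives of $f$ and Tonelli. Almost-everywhere finiteness of $|\mu|*|f|$ again follows from the local $L^p$ bound. As a remark, the case $\lambda=0$ reduces to the classical Young and Minkowski inequalities, which is a consistency check; no earlier lemma of the paper is needed.
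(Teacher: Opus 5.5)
The paper gives no proof of this lemma: it is recalled in the background section as a standard tool from the Morrey-space literature (\cite{suleiman2020aggregation,suleiman2023existence}), so the only comparison possible is with the statement itself. Your argument is correct and is the standard one. Minkowski's integral inequality reduces everything to the bound $\sup_{y}\|f\|_{L^p(B_R(x_0-y))}\le R^{\lambda/p}\|f\|_{M_{p,\lambda}}$. That bound holds precisely because the Morrey norm takes the supremum over all centres at a fixed radius. Your handling of a.e.\ definedness, through $|g|*|f|$ and $|\mu|*|f|$ together with Tonelli, is also sound.

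Two small points would tighten it. First, part (i) is literally the special case $d\mu=g\,dx$ of part (ii), since then $|\mu|(\mathbb R^n)=\|g\|_{L^1(\mathbb R^n)}$, so only (ii) needs a proof. Second, in (ii) you should record the one-line Tonelli check that two representatives of $f$ agreeing Lebesgue-a.e.\ give convolutions $\mu*f$ agreeing a.e. Without it, $\mu*f$ is not shown to be well defined as an element of $L^p_{\rm loc}$ independently of the Borel representative you choose.
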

\subsection{Admissible parameter regime}

Throughout the Morrey framework we assume
\begin{equation}\label{eq:param_cond}
n\ge2,\qquad 0\le\lambda<n-1,
\end{equation}
\[
1<p<q<\infty,
\qquad
p=n-\lambda,
\qquad
\frac1p+\frac1q<1,
\]
and define the time--decay exponent
\[
\eta=\frac12-\frac{n-\lambda}{2q}>0.
\]

\subsection{Scaling--invariant solution space}

We now introduce the functional space in which the fixed--point argument is
performed.

\begin{definition}[Scaling--invariant Morrey solution space]
Let $p,q,\lambda$ and $\eta$ satisfy \eqref{eq:param_cond}.
We define
\[
\mathcal E_q
:=
\Big\{
u:(0,\infty)\to M_{p,\lambda} \;:\;
u\in BC((0,\infty);M_{p,\lambda}),
\quad
t^\eta u(t)\in BC((0,\infty);M_{q,\lambda})
\Big\},
\]
endowed with the norm
\[
\|u\|_{\mathcal E_q}
:=
\sup_{t>0}\|u(t)\|_{M_{p,\lambda}}
+
\sup_{t>0} t^\eta\|u(t)\|_{M_{q,\lambda}}.
\]
\end{definition}

The space $\mathcal E_q$ is invariant under the natural parabolic scaling of the
aggregation equation. Its completeness follows from the completeness of the
Banach spaces $BC((0,\infty);M_{p,\lambda})$ and
$BC((0,\infty);M_{q,\lambda})$; see \cite{suleiman2023existence}.
 Indeed, if $(u_k)$ is a
Cauchy sequence in $\mathcal E_q$, then both
\[
\sup_{t>0}\|u_k(t)-u_\ell(t)\|_{M_{p,\lambda}}
\quad\text{and}\quad
\sup_{t>0}t^\eta\|u_k(t)-u_\ell(t)\|_{M_{q,\lambda}}
\]
converge to zero.

We now recall the analytic ingredients underlying the fixed--point framework
for aggregation equations in Morrey spaces, following closely the
well--posedness theory developed in
\cite{suleiman2020aggregation,suleiman2023existence}.

\begin{lemma}[Abstract fixed point lemma, cf.\ \cite{suleiman2020aggregation,suleiman2023existence}]
\label{lem:fixed_point}

Let \(X\) be a Banach space and \(B:X\times X\to X\) bilinear with
\[
\|B(x_1,x_2)\|_X\le N\|x_1\|_X\|x_2\|_X.
\]
If \(\|y\|_X\le\varepsilon\) and \(4N\varepsilon<1\),
the equation \(x=y+B(x,x)\) has a unique solution.
\end{lemma}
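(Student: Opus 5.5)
The plan is to obtain the solution as the unique fixed point of the map $\Phi(x):=y+B(x,x)$ on a closed ball $\overline{B}_r=\{x\in X:\|x\|_X\le r\}$, using the Banach contraction principle. The radius will be chosen in terms of $N$ and $\varepsilon$ only. Since the statement leaves the case $N=0$ trivial ($x=y$ is the unique solution), we assume $N>0$. Take
\[
r:=\frac{1-\sqrt{1-4N\varepsilon}}{2N},
\]
the smaller root of $Nr^2-r+\varepsilon=0$. This root is real because $4N\varepsilon<1$. It satisfies $\varepsilon\le r<2\varepsilon$, and also $r<1/(2N)$.

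\textbf{Step 1 (self-map).} For $\|x\|_X\le r$ the bilinear bound gives
\[
\|\Phi(x)\|_X\le \|y\|_X+N\|x\|_X^2\le \varepsilon+Nr^2=r .
\]
Hence $\Phi(\overline B_r)\subset\overline B_r$.

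\textbf{Step 2 (contraction).} By bilinearity, $B(x,x)-B(z,z)=B(x-z,x)+B(z,x-z)$. Therefore
\[
\|\Phi(x)-\Phi(z)\|_X\le N\bigl(\|x\|_X+\|z\|_X\bigr)\|x-z\|_X\le 2Nr\,\|x-z\|_X .
\]
Since $2Nr=1-\sqrt{1-4N\varepsilon}<1$, the map $\Phi$ is a strict contraction on the closed subset $\overline B_r$ of the Banach space $X$. The contraction principle then gives existence of a fixed point, and uniqueness within $\overline B_r$. We also get the bound $\|x\|_X\le r\le 2\varepsilon$. A cruder alternative is $r=2\varepsilon$, which gives the contraction constant $4N\varepsilon<1$ directly from the hypothesis.

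\textbf{Main obstacle.} The only delicate point is what ``unique'' means. Uniqueness holds in the ball $\overline B_r$; it cannot hold in all of $X$. For example, in the scalar case with $B(a,b)=Nab$ there is a second solution of size about $1/N$. I will therefore state the conclusion as uniqueness among solutions with $\|x\|_X\le 2\varepsilon$, or more sharply among $\|x\|_X<1/(2N)$. To cover this larger set, suppose $x$ is the fixed point in $\overline B_r$ and $z$ is any solution with $\|z\|_X<1/(2N)$. The estimate in Step 2 gives $\|x-z\|_X\le N(\|x\|_X+\|z\|_X)\|x-z\|_X$, and the factor $N(\|x\|_X+\|z\|_X)$ is strictly less than $1$, so $z=x$. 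This is the form in which the lemma is applied with $X=\mathcal E_q$, $y=G(t)u_0$, and $B$ as in \eqref{eq:bilinear_operator}.
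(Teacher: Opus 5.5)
Your argument is correct and is the standard Banach contraction proof behind this lemma; the paper gives no proof of its own and only cites \cite{suleiman2020aggregation,suleiman2023existence}. Two refinements of the usual choice $r=2\varepsilon$ are harmless: taking instead the smaller root $r$ of $Nr^2-r+\varepsilon=0$, and extending uniqueness to $\|x\|_X<1/(2N)$. You are also right that the uniqueness clause needs a qualifier. Your scalar example shows the lemma as literally stated in the paper (uniqueness in all of $X$) is false, and the standard form of this result asserts uniqueness only among solutions with $\|x\|_X\le 2\varepsilon$. Your version covers that case, since $4N\varepsilon<1$ gives $2\varepsilon<1/(2N)$.
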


In our setting the Banach space is \(X=\mathcal E_q\), with \(y=G(\cdot)u_0\).

\begin{lemma}[Semigroup estimates]\label{lem:semigroup}
Let \(1\le p_1,p_2<\infty\) and \(0\le\lambda_1,\lambda_2<n\).
If
\[
0\le \frac{n-\lambda_1}{p_1}\le \frac{n-\lambda_2}{p_2},
\]
then for all \(t>0\),
\[
\|G(t)f\|_{M_{p_2,\lambda_2}}
\le C\, t^{-\frac12\left(\frac{n-\lambda_1}{p_1}-\frac{n-\lambda_2}{p_2}\right)}
\|f\|_{M_{p_1,\lambda_1}},
\]
\[
\|\nabla G(t)f\|_{M_{p_2,\lambda_2}}
\le C\, t^{-\frac12\left(1+\frac{n-\lambda_1}{p_1}
-\frac{n-\lambda_2}{p_2}\right)}
\|f\|_{M_{p_1,\lambda_1}}.
\]
\end{lemma}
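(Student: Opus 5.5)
The plan is to reduce both estimates to pointwise bounds on the heat kernel and its gradient on balls, combined with a covering argument at scale $\sqrt t$. The scaling property of $M_{p,\lambda}$ and the parabolic scaling $g(x,t)=t^{-n/2}g(x/\sqrt t,1)$ reduce everything to $t=1$. Indeed, with $f_t(y)=f(\sqrt t\,y)$ we have $(G(t)f)(\sqrt t\,x)=(G(1)f_t)(x)$, and similarly $\sqrt t\,(\nabla G(t)f)(\sqrt t\,x)=(\nabla G(1)f_t)(x)$. By the scaling identity, $\|f_t\|_{M_{p_1,\lambda_1}}=t^{-\frac{n-\lambda_1}{2p_1}}\|f\|_{M_{p_1,\lambda_1}}$, and likewise for the target norm. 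These factors produce exactly the powers $t^{-\frac12(\frac{n-\lambda_1}{p_1}-\frac{n-\lambda_2}{p_2})}$ and, with the extra $t^{-1/2}$ for the gradient, the second estimate. So it suffices to prove
\[
\|\Phi*f\|_{M_{p_2,\lambda_2}}\le C\|f\|_{M_{p_1,\lambda_1}},
\qquad \Phi\in\{g(\cdot,1),\nabla g(\cdot,1)\},
\]
under $\frac{n-\lambda_1}{p_1}\ge\frac{n-\lambda_2}{p_2}$.

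At $t=1$ I first establish an $L^\infty$ bound. Decompose $\mathbb R^n$ into unit cubes $Q_k$. Since $|\Phi(z)|\le Ce^{-|z|^2/8}$, Hölder on each cube together with $\|f\|_{L^{p_1}(Q_k)}\le C\|f\|_{M_{p_1,\lambda_1}}$ (radius-one balls) gives
\[
|\Phi*f(x)|\le \sum_k \sup_{Q_k}|\Phi(x-\cdot)|\,\|f\|_{L^1(Q_k)}\le C\|f\|_{M_{p_1,\lambda_1}}.
\]
The sum converges by Gaussian decay. Hence $\Phi*f\in L^\infty$. Also $\|\Phi*f\|_{M_{p_1,\lambda_1}}\le\|\Phi\|_{L^1}\|f\|_{M_{p_1,\lambda_1}}$ by Lemma~\ref{lem:young_morrey}(i).

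Next I pass from these two bounds to $M_{p_2,\lambda_2}$ by splitting radii. For $R\le1$, bound $R^{-\lambda_2/p_2}\|\Phi*f\|_{L^{p_2}(B_R)}\le CR^{(n-\lambda_2)/p_2}\|\Phi*f\|_\infty$, which is fine since $\lambda_2<n$. For $R\ge1$, I need $R^{-\lambda_2/p_2}\|h\|_{L^{p_2}(B_R)}$ with $h=\Phi*f$. I interpolate between $\|h\|_{L^{p_1}(B_R)}\le R^{\lambda_1/p_1}\|h\|_{M_{p_1,\lambda_1}}$ and $L^\infty$ when $p_2\ge p_1$:
\[
\|h\|_{L^{p_2}(B_R)}\le \|h\|_\infty^{1-p_1/p_2}\|h\|_{L^{p_1}(B_R)}^{p_1/p_2}\lesssim R^{\lambda_1/p_2}.
\]
This is acceptable when $\lambda_1\le\lambda_2$. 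When instead $p_2<p_1$, use Hölder on $B_R$ to get the factor $R^{n(1/p_2-1/p_1)+\lambda_1/p_1}$.

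In each case, the resulting exponent of $R$ is at most $\lambda_2/p_2$ precisely under the stated hypothesis. I expect the main obstacle to be this bookkeeping. The single condition $\frac{n-\lambda_1}{p_1}\ge\frac{n-\lambda_2}{p_2}$ does not by itself force both $\lambda_1\le\lambda_2$ and $p_1\le p_2$. The interpolation must therefore be done more carefully, applying Hölder/interpolation on $B_R$ with the exponent chosen to match the dimension count.

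If the casework fails, I would fall back on citing the standard result in Kozono--Yamazaki \cite{kozono1994semilinear}, as \cite{suleiman2023existence} does. It covers the parameter regimes actually used, namely $\lambda_1=\lambda_2=\lambda$ and $p_1\le p_2$. In that regime the argument above closes directly, since $\lambda_1=\lambda_2$.
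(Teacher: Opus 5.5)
Your scaling reduction to $t=1$, the unit-cube $L^\infty$ bound for $\Phi*f$, and the split into $R\le 1$ and $R\ge 1$ are all correct. The paper itself gives no proof of this lemma: it imports it from the Kozono--Yamazaki theory via \cite{suleiman2023existence}. The problems are in how you close the argument.

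First, you silently reversed the hypothesis. The statement assumes $\frac{n-\lambda_1}{p_1}\le\frac{n-\lambda_2}{p_2}$. Your H\"older step for $p_2<p_1$ yields $R^{\frac{n}{p_2}-\frac{n-\lambda_1}{p_1}}$, and that needs $\frac{n-\lambda_2}{p_2}\le\frac{n-\lambda_1}{p_1}$. So the exponents do not work out ``precisely under the stated hypothesis''. As printed, the lemma is false. With $\lambda_1=\lambda_2=0$ and $p_2<p_1$ it asserts $G(t)\colon L^{p_1}\to L^{p_2}$. This fails for $f=(1+|x|)^{-a}$ with $n/p_1<a\le n/p_2$, since $G(t)f\ge c_t(1+|x|)^{-a}$. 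Your direction is the correct one, and it is the one Lemma~\ref{lem:bilinear} actually uses. But you have to flag it as a correction of the statement rather than present it as the given hypothesis.

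Second, the case you leave open, $p_2\ge p_1$ with $\lambda_1>\lambda_2$, cannot be rescued by a more careful interpolation, because the estimate is false there. For $n\ge2$ take $f=\mathbf{1}_{\{|x_n|<1\}}$.
\begin{itemize}
\item Every ball satisfies $\int_{B_R(x_0)}|f|^{p_1}\le CR^{n-1}$, so $f\in M_{p_1,n-1}$.
\item With $\lambda_1=n-1$, $\lambda_2=0$, $p_2=np_1$, the two scaling indices coincide, so the hypothesis holds in either direction.
\item Yet $G(1)f$ is a positive function of $x_n$ alone, hence not in $L^{p_2}(\mathbb R^n)=M_{p_2,0}$.
\end{itemize}
So the lemma fails even on the equality hyperplane, independently of the typo. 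More generally, take indicators of unions of disjoint balls of radius $\tfrac12$, with at most $CR^{\lambda_1}$ of them meeting any ball of radius $R\ge1$ and at least $cR^{\lambda_1}$ inside $B_R(0)$. These show that $\lambda_1\le\lambda_2$ is necessary for all $p_1,p_2$.

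The missing ingredient is therefore an extra hypothesis, not a sharper estimate: assume $\lambda_1\le\lambda_2$ and $\frac{n-\lambda_2}{p_2}\le\frac{n-\lambda_1}{p_1}$. With that, your two cases already close.
\begin{itemize}
\item For $p_2\ge p_1$, your bound $R^{\lambda_1/p_2}$ is at most $R^{\lambda_2/p_2}$.
\item For $p_2<p_1$, the H\"older bound is controlled exactly by the scaling inequality, which in that case forces $\lambda_1<\lambda_2$ anyway.
\end{itemize}
Your fallback regime $\lambda_1=\lambda_2$, $p_1\le p_2$ lies inside this corrected statement. It covers every use of the lemma in the paper: $G(t)u_0$ from $M_{p,\lambda}$ to $M_{q,\lambda}$, and $\nabla G(t-s)$ applied to products lying in $M_{r,\lambda}$ with $1/r=1/p+1/q$, or in $M_{q/2,\lambda}$. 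That is all the paper relies on through its citation.
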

\begin{lemma}[Bilinear estimates, cf.\ \cite{suleiman2023existence}]
\label{lem:bilinear}
Assume $\nabla K\in L^1(\mathbb R^n)$. Then there exist constants
$C_1,C_2>0$, depending only on $\|\nabla K\|_{L^1(\mathbb R^n)}$ and the
parameters $(p,q,\lambda,\eta)$, such that
\[
\sup_{t>0}\|B(u,v)(t)\|_{M_{p,\lambda}}
\le C_1
\sup_{t>0}t^\eta\|u(t)\|_{M_{q,\lambda}}
\sup_{t>0}\|v(t)\|_{M_{p,\lambda}},
\]
\[
\sup_{t>0}t^\eta\|B(u,v)(t)\|_{M_{q,\lambda}}
\le C_2
\sup_{t>0}t^\eta\|u(t)\|_{M_{q,\lambda}}
\sup_{t>0}t^\eta\|v(t)\|_{M_{q,\lambda}}.
\]
This is \cite[Lemma 4.3]{suleiman2023existence}; the proof combines the
semigroup estimates of Lemma~\ref{lem:semigroup} with the H\"older
inequality of Lemma~\ref{lem:holder_morrey} and the Young inequality
$\|g*f\|_{M_{p,\lambda}}\le\|g\|_{L^1}\|f\|_{M_{p,\lambda}}$ of
Lemma~\ref{lem:young_morrey}(i), applied with $g=\nabla K$; in particular
$C_1,C_2$ are, explicitly, constant multiples of $\|\nabla K\|_{L^1(\mathbb R^n)}$.
We emphasize that, as stated there, this is the \emph{only} hypothesis on
$\nabla K$ used by
\cite{suleiman2020aggregation,suleiman2023existence}: it is a statement
about the \emph{global} $L^1(\mathbb R^n)$ norm of $\nabla K$, not a local
$L^p_{\mathrm{loc}}$ statement for $p\ne1$. This is Mechanism (A) of
Section~\ref{subsec:rlct_definition} below; Mechanism (B) (local
$L^p_{\mathrm{loc}}$ integrability for $p\ne1$) is a variant we record for
completeness but is not what \cite{suleiman2020aggregation,suleiman2023existence}
require.
\end{lemma}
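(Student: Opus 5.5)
We bound the two pieces of the Duhamel term $B(u,v)$ in \eqref{eq:bilinear_operator} separately, each time reducing to a one-dimensional time integral of Beta type. The only place the kernel enters is the Young inequality of Lemma~\ref{lem:young_morrey}(i) with $g=\nabla K$:
\[
\|\nabla K*v(s)\|_{M_{r,\lambda}}\le\|\nabla K\|_{L^1}\|v(s)\|_{M_{r,\lambda}},\qquad r\in\{p,q\}.
\]
This is what makes $C_1,C_2$ constant multiples of $\|\nabla K\|_{L^1}$. Throughout we write $\mu:=n-\lambda=p$.

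\textbf{First estimate.} Fix $t>0$. By Lemma~\ref{lem:holder_morrey} with $(p_1,\lambda_1)=(q,\lambda)$, $(p_2,\lambda_2)=(p,\lambda)$, the product $u(s)(\nabla K*v(s))$ lies in $M_{r,\lambda}$ with $1/r=1/p+1/q<1$. This is admissible because $\lambda/r=\lambda/q+\lambda/p$ and $r>1$ by \eqref{eq:param_cond}. Combining with Young's inequality,
\[
\|u(\nabla K*v)(s)\|_{M_{r,\lambda}}\le\|\nabla K\|_{L^1}\,s^{-\eta}\sup_\tau\tau^\eta\|u\|_{M_{q,\lambda}}\sup_\tau\|v\|_{M_{p,\lambda}}.
\]
Next apply the gradient estimate of Lemma~\ref{lem:semigroup} from $M_{r,\lambda}$ to $M_{p,\lambda}$. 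The condition $\mu/r\ge\mu/p$ holds, and the resulting decay is $(t-s)^{-\frac12(1+\mu/q)}$. The exponent $\frac12(1+\mu/q)$ is less than $1$ because $q>p=\mu$. We obtain the time integral
\[
\int_0^t(t-s)^{-\frac12(1+\mu/q)}s^{-\eta}\,ds=t^{\frac12-\frac{\mu}{2q}-\eta}\,\mathrm{B}\!\left(1-\eta,\tfrac12-\tfrac{\mu}{2q}\right).
\]
Since $\eta=\frac12-\frac{\mu}{2q}$, the power of $t$ vanishes. The Beta function is finite because $\eta<1$ and $\frac12-\frac{\mu}{2q}>0$. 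This gives $C_1$.

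\textbf{Second estimate.} Use Hölder in the form $M_{q,\lambda}\cdot M_{q,\lambda}\subset M_{q/2,\lambda}$, which needs $q\ge2$. Together with Young's inequality this yields the bound $\|\nabla K\|_{L^1}s^{-2\eta}(\cdots)$. Then apply the gradient semigroup estimate from $M_{q/2,\lambda}$ to $M_{q,\lambda}$, whose exponent is $\frac12(1+\mu/q)$ as before. The integral $\int_0^t(t-s)^{-\frac12(1+\mu/q)}s^{-2\eta}ds$ scales like $t^{\frac12-\frac{\mu}{2q}-2\eta}=t^{-\eta}$, so multiplying by $t^\eta$ gives a constant. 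Its finiteness requires $2\eta<1$, i.e.\ $\mu/q>0$, which holds.

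\textbf{Main obstacle.} The delicate point is the range restriction in the second estimate. If $q<2$, then $q/2<1$ and Lemma~\ref{lem:holder_morrey} does not apply as stated. I would first try to rule this out from \eqref{eq:param_cond}: since $q>p=n-\lambda>1$ and $1/p+1/q<1$, one might hope that $q\ge2$ is forced. If it is not, I would instead interpolate, estimating $u\in M_{q,\lambda}$ and $\nabla K*v$ in $M_{p,\lambda}$ weighted in time, reusing the first-estimate structure while keeping the Hölder exponent $r>1$. The time integral then becomes $\int(t-s)^{-\frac12(1+\mu/p-\mu/q+\mu/q)}\cdots$, and its exponent still has to be checked to be less than $1$. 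That check is the step where the admissibility conditions must be verified carefully. All remaining constants come from Lemma~\ref{lem:semigroup} and Beta integrals, and depend only on $(p,q,\lambda,\eta)$.
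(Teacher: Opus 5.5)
Your argument follows the paper's route. The paper gives no proof of its own beyond citing \cite[Lemma 4.3]{suleiman2023existence} and naming the ingredients: Morrey H\"older, Young with $g=\nabla K$, and the gradient heat estimate. Your two computations carry this out correctly. In both estimates the time singularity is $(t-s)^{-\frac12(1+\mu/q)}=(t-s)^{-(1-\eta)}$. The Beta integrals $\mathrm{B}(1-\eta,\eta)$ and $\mathrm{B}(1-2\eta,\eta)$ then give $C_1,C_2$ as explicit multiples of $\|\nabla K\|_{L^1}$. You rightly use the semigroup estimate in its smoothing direction, with source scaling index at least the target index (e.g.\ $\mu/r\ge\mu/p$). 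The inequality as printed in Lemma~\ref{lem:semigroup} points the other way and would not permit the step $M_{r,\lambda}\to M_{p,\lambda}$.

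The one step you leave open is easily closed, and you should close it rather than leave it as a hope. Since $p<q$, one has $2/q<1/p+1/q<1$, so $q>2$ is forced by \eqref{eq:param_cond}. Hence $M_{q,\lambda}\cdot M_{q,\lambda}\subset M_{q/2,\lambda}$ with $q/2>1$ is always available, and your second estimate goes through as written. This is the condition $1<q/2$ the paper lists when recalling \cite[Thm.~3.1]{suleiman2023existence}; it is redundant.

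Your fallback would not have worked. Pairing $u\in M_{q,\lambda}$ with $\nabla K*v\in M_{p,\lambda}$ and mapping into $M_{q,\lambda}$ gives the singularity $(t-s)^{-\frac12(1+\mu/p)}=(t-s)^{-1}$, because $\mu=p$. This is not integrable at $s=t$. It would also require $\sup_s\|v(s)\|_{M_{p,\lambda}}$, which does not appear on the right-hand side of the second estimate.
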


Together, the linear and bilinear bounds allow the contraction lemma to be
applied, yielding global existence and uniqueness of mild solutions to
\eqref{eq:agg} in the space $\mathcal E_q$.

\section{Geometric criteria for Morrey admissibility}
\label{sec:rlct_morrey}

We now turn to the geometric analysis of singular interaction kernels.
Our goal is to identify the mechanism governing the local integrability of
$\nabla K$, which is the key ingredient in the fixed--point framework above.

Rather than relying on isotropic or purely scaling arguments, we make explicit
the geometric structure underlying admissibility thresholds. We show that these
thresholds are organized by resolution--theoretic data associated with the
singularities of the interaction kernel. We begin by recalling the distinction
between ordinary resolutions and log--resolutions, then extract the numerical
data controlling volume distortion under resolution, and finally introduce the
real log--canonical threshold and its relation to Morrey admissibility via
local $L^p$ properties of $\nabla K$.

\subsection*{Notation summary for Section~\ref{sec:rlct_morrey}}

For the convenience of the reader, we collect here the main divisorial symbols
used throughout this section. Throughout, $\kappa>0$ is a fixed exponent and
the model kernel is $K=|f|^{-\kappa}$ (the case $\kappa=1$ corresponds to
$K=1/f$). Let $\pi:\widetilde X\to X$ be a log-resolution of the pair formed
by $f$ and its Jacobian ideal
$\mathcal J_f=(\partial_1 f,\dots,\partial_n f)$, and let $E_i$ denote the
irreducible components of the \emph{total transform} of $(f=0)$ meeting
$\pi^{-1}(0)$; these comprise both the exceptional divisors of $\pi$ and the
components of the strict transform of $\{f=0\}$.

\begin{center}
\small
\renewcommand{\arraystretch}{1.5}
\begin{tabular}{@{}c p{6.7cm} p{3.1cm}@{}}
\toprule
Symbol & Definition & Role \\
\midrule
$\nu_i$ & $\nu_{E_i}(f)$: order of vanishing of $f$ along $E_i$ & controls pole of $K=|f|^{-\kappa}$ \\
$M_i$ & $\nu_{E_i}(\mathcal J_f)$: order of vanishing of $\mathcal J_f$ along $E_i$; $0\le M_i\le\nu_i-1$ & controls pole of $\nabla K$ \\
$a_i$ & discrepancy exponent: $\pi^*(dx)\asymp\prod|y_i|^{a_i}dy$ & Jacobian distortion \\
$\mathrm{rlct}_0(f)$ & $\min_i (a_i+1)/\nu_i$ & integrability of $|f|^{-c}$ \\
$p^*(\kappa)$ & $\min_i (a_i+1)/\bigl((\kappa+1)\nu_i-M_i\bigr)$ & exact gradient-integrability threshold \\
$\overline{p}(\kappa)$ & $\mathrm{rlct}_0(f)/(\kappa+1)$ & computable lower bound for $p^*(\kappa)$ \\
\bottomrule
\end{tabular}
\end{center}

\medskip
On strict-transform components one has $a_i=0$, and on reduced components
along which $\{f=0\}$ is generically smooth one has $\nu_i=1$ and $M_i=0$.
The inequality $M_i\le\nu_i-1$ is proved in
Lemma~\ref{lem:chain_rule} below; it replaces, and reverses, the naive
expectation that differentiation lowers vanishing orders by exactly one,
which applies to the gradient of $f\circ\pi$ but \emph{not} to the pullback
of the ambient gradient $\nabla f$. When $\kappa=1$ we write simply $p^*$
and $\overline{p}$.

\subsection{Resolution versus log--resolution}
\label{subsec:resolution_vs_logresolution}

We begin by clarifying the geometric framework needed to quantify integrability,
starting with a distinction that is essential for the subsequent analysis.

Resolution of singularities plays a central role in the geometric analysis of
singular kernels. However, for the purposes of Morrey admissibility, it is
important to distinguish between resolving the singularities of the underlying
space and resolving a pair consisting of the space together with analytic data.
This distinction becomes essential once integrability and volume growth are
taken into account.

\paragraph{Ordinary resolution.}
An ordinary resolution of a real or complex analytic germ $(X,0)$ is a proper
birational morphism
\[
\pi:\widetilde X \longrightarrow X
\]
such that $\widetilde X$ is smooth, $\pi$ is an isomorphism over the regular locus
of $X$, and the exceptional divisor
\[
\Exc(\pi)=\pi^{-1}(\mathrm{Sing}(X))
\]
has simple normal crossings in $\widetilde{X}$.

Ordinary resolutions remove singularities of the space and provide detailed
information on its local geometry. However, they do not record how analytic
functions or measures vanish or concentrate near the exceptional divisor.
Consequently, ordinary resolutions alone do not yield quantitative information
sufficient for local integrability estimates.

\begin{example}[Why ordinary resolution is insufficient]
\label{ex:ordinary_resolution_insufficient}
Consider the plane curve singularity $X = \{xy = 0\} \subset \mathbb{R}^2$.
An ordinary resolution $\pi:\widetilde{X} \to X$ yields two smooth branches
meeting transversally at the exceptional divisor. While this resolution
clarifies the topological structure of $X$, it provides no information on
how a given analytic function $f = xy$ vanishes along the exceptional
components.

Now let $K = 1/f = 1/(xy)$ be a singular interaction kernel. To determine
whether $\nabla K \in L^p_{\mathrm{loc}}$, one must know not only that
$f$ vanishes along $E$, but also \emph{how fast} it vanishes---that is,
the vanishing order $\nu_{E}(f)$. This numerical data is invisible in an
ordinary resolution but becomes explicit in a log--resolution of the pair
$(X, f)$. Only the latter provides the quantitative input needed to
determine the admissible range of Morrey exponents.
\end{example}

\paragraph{Log--resolution.}
A log--resolution resolves both the space and prescribed analytic data. More
precisely, a log--resolution of a pair $(X,\mathcal I)$, where $\mathcal I$ is a
coherent analytic ideal, is a resolution
\[
\pi:\widetilde X \longrightarrow X
\]
such that $\widetilde X$ is smooth, the exceptional divisor has simple normal
crossings, and the total transform of $\mathcal I$ is locally monomial along the
exceptional locus.

Concretely, in adapted local coordinates $(y_1,\ldots,y_n)$ on $\widetilde{X}$,
the pullback ideal factors as
\[
\mathcal I\cdot\mathcal O_{\widetilde X}
=
(u) \cdot \prod_{i=1}^r (y_i)^{\nu_i},
\]
where $u$ is an analytic unit (nowhere vanishing), each hypersurface
$E_i = \{y_i = 0\}$ is an irreducible component of the exceptional divisor,
and $\nu_i := \nu_{E_i}(\mathcal I) \in \mathbb{N}$ denotes the
\emph{vanishing order} of $\mathcal{I}$ along $E_i$. Equivalently, in
divisorial notation,
\[
\mathcal I\cdot\mathcal O_{\widetilde X}
=
\mathcal O_{\widetilde X}\!\left(-\sum_{i=1}^r \nu_i\, E_i\right).
\]

Simultaneously, the resolution map induces a distortion of volume. The
Jacobian determinant of $\pi$ satisfies
\[
|\det J_\pi(y)|
=
|v(y)| \cdot \prod_{i=1}^r |y_i|^{a_i},
\]
where $v$ is again an analytic unit and $a_i \geq 0$ is the
\emph{discrepancy exponent} along $E_i$. In other words, the ambient
volume form pulls back as
\[
\pi^*(dx)
=
v(y) \cdot \prod_{i=1}^r |y_i|^{a_i} \, dy,
\]
where $dx$ denotes the Euclidean volume form on $X$ and $dy$ the
corresponding form on $\widetilde{X}$.

These numerical pairs $(\nu_i, a_i)$ encode the information needed to analyze
the local integrability of $|f|^{-\kappa}$ near the singular set. To treat
the gradient $\nabla K$, however, one further numerical invariant is needed:
the vanishing of $|\nabla f|$ along each divisor. This is measured by the
\emph{Jacobian ideal}
\[
\mathcal J_f := (\partial_1 f,\dots,\partial_n f)\subset\mathcal O_{X,0}.
\]
By Hironaka's theorem we may, and throughout the paper do, choose $\pi$ to be
a simultaneous log-resolution of the product ideal
$(f)\cdot\mathcal J_f$, so that in each adapted chart the pullback of
$\mathcal J_f$ is also principal and monomial:
\[
\mathcal J_f\cdot\mathcal O_{\widetilde X}
=
\Bigl(\prod_{i=1}^r y_i^{M_i}\Bigr),
\qquad
M_i:=\nu_{E_i}(\mathcal J_f)\in\mathbb Z_{\ge0}.
\]
Since the generators of the pulled-back ideal are the monomial times analytic
functions without common zeros, this principalization yields the two-sided
comparability
\begin{equation}\label{eq:gradf_monomial}
|\nabla f|\circ\pi
\;\asymp\;
\prod_{i=1}^r |y_i|^{M_i}
\qquad\text{locally on each chart,}
\end{equation}
which is precisely the estimate needed below. We emphasize that
\eqref{eq:gradf_monomial} concerns the pullback of the \emph{ambient}
gradient, not the gradient of $f\circ\pi$; the two are related by the chain
rule, which yields the following key inequality.

\begin{lemma}[Chain-rule bound for gradient orders]
\label{lem:chain_rule}
With the notation above, for every component $E_i$ along which $f\circ\pi$
vanishes,
\[
0\;\le\;M_i\;\le\;\nu_i-1.
\]
Moreover, if $\nabla f(0)=0$, then $M_i\ge1$ for every divisor $E_i$ with
$\pi(E_i)=\{0\}$. Locally on each component: $M_i=0$ forces $\nu_i=1$, and
on a strict-transform component $E_i$ with $M_i=0$ the ambient gradient
$\nabla f$ is generically nonzero along $\pi(E_i)$ (equivalently,
$\{f=0\}$ is reduced and smooth at the generic point of $E_i$).
\end{lemma}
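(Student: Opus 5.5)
The plan is to work locally in an adapted chart of $\pi$ around a generic point $q$ of $E_i=\{y_1=0\}$. At such a point no other component of the total transform passes, so
\[
f\circ\pi = y_1^{\nu_i}\,u(y), \qquad u(q)\neq 0 .
\]
Likewise, by \eqref{eq:gradf_monomial}, $|\nabla f|\circ\pi\asymp |y_1|^{M_i}$ near $q$. Nonnegativity $M_i\ge0$ is immediate because $M_i$ is a vanishing order of a principal monomial ideal.

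For the upper bound I would use the chain rule, written in the form
\[
\nabla_y (f\circ\pi)(y) = J_\pi(y)^{T}\,(\nabla f)(\pi(y)).
\]
Since $J_\pi$ is analytic, its entries are bounded near $q$, so
\[
|\nabla_y(f\circ\pi)| \lesssim |\nabla f|\circ\pi \asymp |y_1|^{M_i}.
\]
On the other hand, $\partial_{y_1}(f\circ\pi) = y_1^{\nu_i-1}\bigl(\nu_i u + y_1\partial_{y_1}u\bigr)$, and the factor in parentheses is a unit near $q$ because $\nu_i\ge1$ and $u(q)\ne0$. Hence $|y_1|^{\nu_i-1}\lesssim |y_1|^{M_i}$ as $y_1\to0$ along a transversal segment, which forces $M_i\le \nu_i-1$. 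The key point is the direction of the inequality: the pulled-back ambient gradient dominates the gradient of the composite, not conversely. Consequently $M_i=0$ forces $\nu_i=1$.

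For the statement at the origin: if $\nabla f(0)=0$ and $\pi(E_i)=\{0\}$, then every generator $\partial_j f\circ\pi$ of $\mathcal J_f\cdot\mathcal O_{\widetilde X}$ vanishes identically on $E_i$, because $\pi$ maps $E_i$ to $0$ where $\partial_j f$ vanishes. So the principal monomial ideal vanishes along $E_i$, and $M_i\ge1$.

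For the last clause, take a strict-transform component $E_i$ with $M_i=0$. Then $|\nabla f|\circ\pi\asymp1$ near a generic point of $E_i$, so $\nabla f\neq0$ at generic points of $\pi(E_i)$, where $\pi$ is a local isomorphism off the exceptional locus. By the implicit function theorem, $\{f=0\}$ is a smooth hypersurface there, and $f$ vanishes to order exactly one, i.e.\ the component is reduced; this is consistent with $\nu_i=1$. Conversely, if $\{f=0\}$ is reduced and smooth at the generic point of $\pi(E_i)$, then $f$ has nonzero differential there, giving $M_i=0$.

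The main obstacle I anticipate is the converse direction in the equivalence for real-analytic $f$. "Reduced" has to be interpreted via the vanishing order $\nu_i=1$ of $f$ itself rather than of its zero set, and a real hypersurface can fail to have a smooth real point of $f$'s order one (e.g.\ $f=x^2+y^2$ has no real hypersurface component). I would therefore phrase the equivalence as "$f$ vanishes to order one along $\pi(E_i)$ at its generic point" and argue that order one means some first partial derivative is nonzero generically, hence $M_i=0$.
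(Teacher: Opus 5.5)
Your proposal is correct and follows the paper's proof essentially step for step: the chain rule $\nabla_y(f\circ\pi)=D\pi^{T}\,(\nabla f)\circ\pi$ shows that the pulled-back ambient gradient dominates the gradient of the composite, while $\partial_{y_i}(f\circ\pi)$ vanishes to order exactly $\nu_i-1$; the remaining clauses then follow from $(\nabla f)\circ\pi\equiv0$ on divisors over a critical point and from $M_i=0\Rightarrow\nu_i=1$. The only difference is cosmetic---you compare pointwise sizes along a transversal segment at a generic point of $E_i$, where the paper compares orders of vanishing along $E_i$---and reading ``reduced'' as $\nu_i=1$ for $f$ itself is the right way to make the parenthetical equivalence precise (the paper argues only its forward direction).
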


\begin{proof}
By the chain rule,
\[
\nabla(f\circ\pi)(y)
=
D\pi(y)^{T}\,\bigl((\nabla f)\circ\pi\bigr)(y).
\]
The entries of $D\pi$ are analytic, hence have nonnegative order along each
$E_i$, so
\[
\operatorname{ord}_{E_i}\nabla(f\circ\pi)\;\ge\;
\operatorname{ord}_{E_i}\bigl((\nabla f)\circ\pi\bigr)
= M_i .
\]
On the other hand, differentiating the monomial factorisation
$f\circ\pi=u\prod_j y_j^{\nu_j}$ with respect to $y_i$ gives
\[
\partial_{y_i}(f\circ\pi)
=
\Bigl(\nu_i\,u + y_i\,\partial_{y_i}u\Bigr)
\prod_{j\ne i} y_j^{\nu_j}\; y_i^{\nu_i-1},
\]
whose restriction to $E_i=\{y_i=0\}$ equals
$\nu_i\,u\prod_{j\ne i}y_j^{\nu_j}$, not identically zero. Hence
$\operatorname{ord}_{E_i}\nabla(f\circ\pi)\le\nu_i-1$, and combining the two
displays gives $M_i\le\nu_i-1$.

If $\nabla f(0)=0$ and $\pi(E_i)=\{0\}$, then $(\nabla f)\circ\pi$ vanishes
identically on $E_i$, so $M_i\ge1$. Finally, $M_i=0$ means that
$\mathcal J_f$ does not vanish along $E_i$, i.e.\ $|\nabla f|$ is generically
nonzero along $\pi(E_i)\subset\{f=0\}$; since $M_i\le\nu_i-1$ forces
$\nu_i=1$ in that case, $E_i$ is a reduced strict-transform component along
which $\{f=0\}$ is generically smooth.
\end{proof}

The inequality $M_i\le\nu_i-1$ guarantees in particular that
$(\kappa+1)\nu_i-M_i\ge\kappa\nu_i+1>0$ for every $\kappa>0$, so the
threshold ratios introduced below are always well defined and positive.

\begin{remark}
\label{rem:pde_readers_logres}
For a reader focused on the PDE side, a log--resolution is simply a change
of coordinates in which $f$, its gradient magnitude $|\nabla f|$, and the
volume distortion $|\det J_\pi|$ all become explicit monomials near the
singularity. Away from the zero set of $f$, the integrability of
\[
|\nabla K| \;=\; \kappa\,\frac{|\nabla f|}{|f|^{\kappa+1}}
\qquad\text{for } K=|f|^{-\kappa}
\]
then reduces, via the change-of-variables formula, to the convergence of
one-dimensional monomial integrals
\[
\int_0^\varepsilon |y_i|^{-p\bigl((\kappa+1)\nu_i - M_i\bigr) + a_i} \, dy_i,
\]
where $M_i$ is the vanishing order of the Jacobian ideal along $E_i$. Each
such integral converges precisely when its exponent satisfies the
corresponding divisorial inequality, and these inequalities are exactly the
admissibility conditions on the Morrey exponents derived below.
\end{remark}

Concretely, this quantitative information is encoded by the numerical pairs
$(\nu_{E_i}(\mathcal I),a_{E_i})$ arising from a log--resolution, which
govern the local integrability of $\nabla K$ in the Morrey fixed--point
framework.

Although a log--resolution yields a finite family of divisorial ratios
\[
\frac{a_{E_i} + 1}{(\kappa+1)\nu_{E_i} - M_{E_i}},
\quad i = 1, \ldots, r,
\]
attached to the components of the total transform of $(f=0)$---exceptional
divisors and strict-transform components alike---only the minimal one
controls the leading--order concentration of volume near the singular set and
therefore determines the maximal integrability range relevant for Morrey
estimates and well--posedness of the associated aggregation equation. This
observation motivates the systematic use of log--resolutions throughout the
remainder of this section.

\subsection{Resolution data and asymptotic volume growth}
\label{subsec:resolution_data_volume}

We extract from a log--resolution
the numerical data that control both local volume growth and integrability near
the singular set.

Let $(X,0)$ be a real or complex analytic germ of pure dimension $n$, and let
$\mathcal I \subset \mathcal O_{X,0}$ be a coherent analytic ideal.
Fix a log--resolution
\[
\pi:\widetilde X \longrightarrow X
\]
of $\mathcal I$, and denote by
\[
\Exc(\pi)=\bigcup_{E\in\mathcal E} E
\]
the exceptional divisor.

In suitable local coordinates
$u=(u_1,\ldots,u_n)$ on $\widetilde X$ adapted to the divisor, by definition
of log--resolution, the total transform of the ideal is monomial:
\[
\mathcal I\cdot\mathcal O_{\widetilde X}
=
\prod_{E\in\mathcal E} u_E^{\nu_E(\mathcal I)},
\]
where each component $E$ is locally given by $u_E=0$ and
$\nu_E(\mathcal I)$ denotes the divisorial valuation of $\mathcal I$ along $E$.

\medskip
At the same time, the resolution map induces a distortion of volume.
If $dx$ denotes a smooth volume form on $X$, then its pullback satisfies
\[
\pi^*(dx)\;\asymp\;\prod_{E\in\mathcal E}|u_E|^{a_E}\,du,
\]
where $a_E$ is the discrepancy exponent of $E$ and $du$ denotes the Euclidean
volume form in the resolution coordinates.

These two families of exponents play complementary roles:

\begin{itemize}
\item $\nu_E(\mathcal I)$ measures the vanishing of the ideal along $E$;
\item $a_E$ measures the Jacobian distortion of the resolution map.
\end{itemize}

Together, they determine the asymptotic behavior of integrals of the form
\[
\int_{U} \Phi(x)\,dx
\]
for functions $\Phi$ with singularities along $V(\mathcal I)$.
After resolution and change of variables, such integrals reduce locally to
products of one--dimensional monomial integrals
\[
\int |u_E|^{\alpha_E}\,du_E,
\]
whose convergence is organized by the exponents $\alpha_E>-1$.
This reduction is the basic mechanism behind the integrability criteria
used throughout the paper.

\medskip
In direct analogy with the linear and bilinear estimates in
Section~\ref{sec:expanded_background}, the analysis therefore focuses on:

\begin{enumerate}
\item local $L^p$ properties of $\nabla K$ near the singular set,
\item the role of resolution data in determining admissible exponents,
\item convolution estimates for the operator
\[
T_K(u)=\nabla K*u,
\]
which plays the same structural role as the gradient of the heat semigroup in
the classical aggregation theory.
\end{enumerate}

The guiding principle is that the admissible Morrey exponents are ultimately
controlled by the same divisorial data that govern the integrability of
reciprocal analytic functions and the asymptotic volume of sublevel sets.

\begin{example}[Monomial calculation: the cusp]
\label{ex:cusp}
Consider the analytic function $f(x,y)=x^2+y^3$ and the kernel $K=1/f$
(i.e.\ $\kappa=1$). A log--resolution is obtained from the weighted blow--up
with weights $(3,2)$; in the chart $x=t^3$, $y=t^2 s$ one computes
\[
f\circ\pi = t^{6}\,(1+s^{3}),
\qquad
|\det J_\pi| = 3\,|t|^{4},
\qquad
(\nabla f)\circ\pi = \bigl(2t^{3},\,3t^{4}s^{2}\bigr),
\]
so that along the exceptional divisor $E=\{t=0\}$,
\[
\nu_E(f)=6,
\qquad
a_E=4,
\qquad
M_E=\nu_E(\mathcal J_f)=3.
\]
Note that $M_E=3$ is \emph{strictly smaller} than $\nu_E-1=5$: the value $5$
is the vanishing order of $\nabla(f\circ\pi)=\bigl(6t^{5}(1+s^{3})+\cdots\bigr)$,
i.e.\ of the gradient computed \emph{in the resolution coordinates}, which is a
different object from the pullback of the ambient gradient
(cf.\ Lemma~\ref{lem:chain_rule}). The exceptional divisor therefore
contributes the ratio
\[
\frac{a_E+1}{2\nu_E - M_E}
=\frac{5}{12-3}
=\frac{5}{9}.
\]

However, the \emph{real} zero set of $f$ is not the origin alone: it is the
cusp curve $x^2=-y^3$, $y\le0$. Its strict transform $E_0$ carries the data
$\nu_{E_0}=1$, $M_{E_0}=0$, $a_{E_0}=0$ (the curve is reduced and generically
smooth), hence the ratio $1/(2\cdot1-0)=1/2$. The threshold is the minimum of
the two contributions:
\[
\nabla K\in L^p_{\mathrm{loc}}
\quad\Longleftrightarrow\quad
p<\min\Bigl(\tfrac59,\tfrac12\Bigr)=\tfrac12 .
\]

Both contributions can be verified directly. Near a smooth point of the
curve, $|f|\asymp d$ and $|\nabla f|\asymp1$, where $d$ is the distance to the
curve, so $|\nabla K|\asymp d^{-2}$ and $\int d^{-2p}\,dd<\infty$ iff $p<1/2$.
Near the origin, the anisotropic scaling $x=\varepsilon^3 X$,
$y=\varepsilon^2 Y$ on dyadic annuli gives $|f|\sim\varepsilon^{6}$,
$|\nabla f|\sim\varepsilon^{3}$, hence $|\nabla K|\sim\varepsilon^{-9}$ on a
region of volume $\sim\varepsilon^{5}$, and
$\sum_k 2^{-k(5-9p)}<\infty$ iff $p<5/9$. This simple calculation
illustrates how the full resolution data---including the gradient orders
$M_i$ and the strict transform---determine the critical integrability
threshold, and how omitting either ingredient produces an incorrect value.
\end{example}

\subsection{The real log--canonical threshold and the gradient integrability threshold}
\label{subsec:rlct_definition}

With the resolution data in place, we are now in a position to introduce the
geometric invariant that governs the dominant integrability behavior of analytic
singularities.
Rather than providing an \emph{a priori} exact characterization of the
integrability threshold, this invariant encodes the leading geometric
contribution to the local concentration of volume and therefore yields a
robust geometric criterion for the class of singularities under study.

\begin{remark}[Real resolutions]
\label{rem:real_resolution}
By ``log-resolution'' we always mean a resolution in the category of
real-analytic manifolds; in the isolated-singularity setting considered
here, its existence follows from Hironaka's theorem together with its
real-analytic refinements (Bierstone--Milman). A subtler point is whether a
log-resolution defined over $\mathbb R$ actually restricts, at every real
point, to an honest $\mathbb R$-normal-crossing local model, rather than
merely an ambient complex simple-normal-crossing structure viewed through
its real points. This is not automatic; it is established rigorously in
\cite[Thm.~3.1]{kostawindisch2026}, via an isomorphism of completions of
local rings and Cohen's structure theorem. The divisorial formula
$\mathrm{rlct}_0(f)=\min_i(a_i+1)/\nu_i$ used below
(Remark~\ref{rem:rlct_divisorial}) is the real-analytic counterpart of the
formula obtained on that basis in \cite[Def./Thm.~3.4]{kostawindisch2026}.
Consequently, every divisorial component
$E_i$ appearing in the minima defining $\mathrm{rlct}_0(f)$,
$p^*(\kappa)$, and $\mathrm{MTI}(K)$ below is a real subvariety of
the real resolution space $\widetilde X$, computed
from the real vanishing orders $\nu_i$, $M_i$, and the real discrepancy
$a_i$: no complexification enters these minima, and no complex-only
exceptional component is ever included.
\end{remark}

\begin{definition}[Real log--canonical threshold]
Let $f$ be real--analytic in a neighbourhood of $0\in\mathbb{R}^n$ with $f(0)=0$.
The real log--canonical threshold of $f$ at the origin is defined by
\[
\mathrm{rlct}_0(f)
:=
\sup\bigl\{\,c>0:\;|f|^{-c}\in L^1_{\mathrm{loc}}\,\bigr\}.
\]
\end{definition}

The real log--canonical threshold provides a sharp measure of the local
integrability of the analytic germ $f$ itself and identifies the divisors
that dominate the asymptotic volume growth of the sublevel sets
$\{|f|\le\varepsilon\}$ as $\varepsilon\to0$.
As such, it captures the principal geometric scale at which singular behavior
concentrates near the origin.

\begin{remark}[Divisorial formula for the RLCT]
\label{rem:rlct_divisorial}
In terms of a real log-resolution as above (Remark~\ref{rem:real_resolution}),
the standard change-of-variables computation gives
\[
\mathrm{rlct}_0(f)=\min_i\frac{a_i+1}{\nu_i},
\]
where the minimum runs over \emph{all} components $E_i$ of the total
transform of $(f=0)$ meeting $\pi^{-1}(0)$, including strict-transform
components---all of which are real, by Remark~\ref{rem:real_resolution}.
When the real zero set of $f$ contains a hypersurface through
the origin, a reduced strict-transform component contributes the ratio
$(0+1)/1=1$, so in that case $\mathrm{rlct}_0(f)\le1$, with equality if and
only if every other divisor $E_i$ entering the minimum also satisfies
$(a_i+1)/\nu_i\ge1$ (e.g.\ when $f$ is smooth with $\nabla f(0)\neq0$, so
that this reduced component is the only one meeting $\pi^{-1}(0)$). If
instead the real zero set is of higher codimension---for instance an
isolated zero---the RLCT can exceed $1$: e.g.\
$\mathrm{rlct}_0(|x|^2)=d/2$ in $\mathbb R^d$. We emphasize that a
component of the strict transform is included in this minimum only when it
is a reduced, generically smooth, \emph{real} branch of $\{f=0\}$ actually
meeting the fiber $\pi^{-1}(0)$; components without real points, or whose
real locus does not meet $\pi^{-1}(0)$, do not contribute. This bookkeeping
is essential rather than a formality: $\lambda_{\mathbb R}\neq\lambda_{\mathbb
C}$ can genuinely occur, as shown by the explicit examples in
\cite[Ex.~5.5, 5.6]{kostawindisch2026}, whose Lemmas~3.5--3.6 give sufficient
conditions, in terms of which divisors meet the real locus, for the real and
complex thresholds (and their multiplicities) to coincide.
\end{remark}

\begin{definition}[Gradient Integrability Threshold]\label{def:MTI}
Let $K$ be an interaction kernel defined in a neighbourhood of the origin in
$\mathbb R^n$. The Gradient Integrability Threshold of $K$ is defined by
\[
\mathrm{MTI}(K)
:=\sup\bigl\{\,p>0:\ \nabla K\in L^p_{\mathrm{loc}}(\mathbb R^n)\,\bigr\}.
\]
\end{definition}

\begin{remark}[Scope and conventions]
\label{rem:MTI_scope}
The quantity $\mathrm{MTI}(K)$ may take the value $+\infty$. If $K$ is smooth
near the origin, then $\nabla K\in L^p_{\mathrm{loc}}$ for every $p>0$, and
consequently $\mathrm{MTI}(K)=\infty$. The interest of the present work lies
in the singular case, where the index is finite. We emphasize that
$\mathrm{MTI}(K)$ is, by definition, a statement about local $L^p$
integrability of $\nabla K$ alone; its relevance to the Morrey fixed-point
framework of Section~\ref{sec:morrey_framework} is made precise only in
Section~\ref{sec:morrey_bridge} below, where it is identified with a
specific inequality involving the Morrey scaling parameter.

Under the two-sided comparability assumption
$|\nabla K|\asymp |\nabla f|/|f|^{\kappa+1}$
of Hypothesis~\ref{hyp:dominant} below, the exact $L^p_{\mathrm{loc}}$
threshold of $\nabla K$ is the resolution-theoretic quantity
$p^*(\kappa)=\min_i \frac{a_i+1}{(\kappa+1)\nu_i-M_i}$
arising from a log-resolution of $f$ and its Jacobian ideal, while the real
log-canonical threshold provides a computable geometric lower bound via
$p < \mathrm{rlct}_0(f)/(\kappa+1)$.
The precise relationship between these quantities is established in
Theorem~\ref{thm:admissibility} below; in particular, the lower bound is
\emph{strict} whenever the threshold is governed by divisors lying over a
critical point of $f$.
\end{remark}

The index $\mathrm{MTI}(K)$ is a purely local, purely geometric quantity. Its
role as an \emph{input} to the fixed--point framework for aggregation
equations---specifically, as the threshold controlling for which Morrey
scaling parameters the local integrability hypothesis on $\nabla K$ used in
\cite{suleiman2020aggregation,suleiman2023existence} is satisfied---is made
precise in Section~\ref{sec:morrey_bridge}.

\begin{remark}
Under the two-sided comparability assumption
$|\nabla K|\asymp |\nabla f|/|f|^{\kappa+1}$
(Hypothesis~\ref{hyp:dominant}), the Gradient Integrability Threshold equals
\[
\mathrm{MTI}(K) = p^*(\kappa) = \min_i \frac{a_i+1}{(\kappa+1)\nu_i-M_i}
\]
for the divisorial data arising from a log-resolution of $f$ and its Jacobian
ideal. The real log-canonical threshold provides the geometric lower bound
$\mathrm{rlct}_0(f)/(\kappa+1) \le \mathrm{MTI}(K)$,
reflecting the dominant contribution to volume concentration, but---as
Theorem~\ref{thm:admissibility}\,(iii) makes precise---the two coincide only
in the mildly singular regime governed by the smooth part of the zero set,
and never when $f$ has an isolated zero in dimension $n\ge2$.
\end{remark}


\begin{hypothesis}[Dominant analytic singularity]
\label{hyp:dominant}
Let $K$ be a singular interaction kernel defined in a neighborhood of the
origin, and let $\kappa>0$. We assume that there exists a real-analytic
function $f$, not identically zero, with $f(0)=0$, such that, away from the
zero set of $f$ in a neighborhood of the origin,
\begin{equation}\label{eq:twoside}
|\nabla K(x)| \;\asymp\; \frac{|\nabla f(x)|}{|f(x)|^{\kappa+1}},
\end{equation}
i.e.\ there exist constants $0 < c_1 \le c_2 < \infty$ and $\varepsilon_0>0$
such that
\[
c_1\,\frac{|\nabla f(x)|}{|f(x)|^{\kappa+1}}
\;\le\;
|\nabla K(x)|
\;\le\;
c_2\,\frac{|\nabla f(x)|}{|f(x)|^{\kappa+1}}
\qquad\text{for all }x\in B_{\varepsilon_0}(0)\setminus\{f=0\}.
\]
The two-sided bound ensures that the $L^p$-integrability of $\nabla K$ and
that of $|\nabla f|\,|f|^{-(\kappa+1)}$ coincide, so that the admissibility
threshold is a genuine two-sided characterisation rather than merely a
sufficient condition.

The assumption \eqref{eq:twoside} is satisfied by the model kernels
$K=|f|^{-\kappa}$; more generally whenever $K = G(f)$ with
$|G'(s)| \asymp |s|^{-(\kappa+1)}$ near $s=0$, up to analytic multiplicative
units; whenever the singular set of $K$ coincides with the zero locus of an
analytic constraint function; and it is stable under local analytic
diffeomorphisms. The classical Newtonian kernel corresponds to $f=|x|^2$ and
$\kappa=(d-2)/2$; see Section~\ref{subsec:newtonian}.
\end{hypothesis}

\begin{remark}[Scope of Hypothesis~\ref{hyp:dominant}]
\label{rem:hyp_fail}
Since the flexibility in the exponent $\kappa$ covers all kernels of the
form $|f|^{-\kappa}$, the main situation genuinely outside the scope of
\eqref{eq:twoside} is that in which the singular behavior of $\nabla K$ is
not modeled by any single analytic divisor---for instance, kernels whose
gradient exhibits singular behavior of different analytic types along
different directions that cannot be absorbed into one function $f$, or
kernels involving non-analytic (e.g.\ logarithmic or oscillatory) factors.
In such cases a stratified or valuation-theoretic refinement of the present
framework would be required, and this falls outside the scope of this paper.
By contrast, hypersurface zero sets of $f$ pose no
difficulty: the strict transform of $\{f=0\}$ enters the divisorial minimum
of Theorem~\ref{thm:admissibility} on the same footing as the exceptional
divisors, as illustrated in Example~\ref{ex:cusp}.
\end{remark}

Under Hypothesis~\ref{hyp:dominant}, the exact $L^p_{\mathrm{loc}}$
threshold of $\nabla K$ is equivalent to
the analysis of the model singularity $|\nabla f|\,|f|^{-(\kappa+1)}$, with
the admissible range of exponents determined by resolution data associated
with the pair $(f,\mathcal J_f)$, as made precise in
Theorem~\ref{thm:admissibility} below; its consequence for Morrey
admissibility is drawn out separately in Section~\ref{sec:morrey_bridge}.
\medskip

\subsection*{A model non-radial singularity}

To illustrate how resolution data translate into concrete admissibility
thresholds, we consider a genuinely anisotropic example for which classical
radial estimates fail to provide sharp bounds.

Let
\[
K(x)=\bigl(x_1^4+x_2^6\bigr)^{-\kappa},
\qquad x=(x_1,x_2)\in\mathbb{R}^2,
\]
with $\kappa>0$, and set $f(x)=x_1^4+x_2^6$, so that $K=|f|^{-\kappa}$.
The real zero set of $f$ is the origin alone, the singularity is non-radial,
and it exhibits distinct scaling behavior along the coordinate directions.

A log--resolution of $f$ and its Jacobian ideal is obtained from the weighted
blow--up with weights $(3,2)$. In the chart $x_1=t^3$, $x_2=t^2 s$ one finds
\[
f\circ\pi=t^{12}\bigl(1+s^6\bigr),
\qquad
|\det J_\pi|=3\,|t|^{4},
\qquad
(\nabla f)\circ\pi=\bigl(4t^{9},\,6t^{10}s^{5}\bigr),
\]
so the resulting exceptional divisor $E=\{t=0\}$ satisfies
\[
\nu_E(f)=12,
\qquad
a_E=4,
\qquad
M_E=9 .
\]
Hence
\[
\mathrm{rlct}_0(f)=\frac{a_E+1}{\nu_E(f)}=\frac{5}{12},
\]
and, since
\[
|\nabla K|=\kappa\,\frac{|\nabla f|}{|f|^{\kappa+1}},
\]
Theorem~\ref{thm:admissibility} below yields the \emph{exact} threshold
\[
\nabla K\in L^p_{\mathrm{loc}}(\mathbb{R}^2)
\quad\text{iff}\quad
p<p^*(\kappa)=\frac{a_E+1}{(\kappa+1)\nu_E-M_E}=\frac{5}{12\kappa+3}.
\]
For $\kappa=1$ this gives $p^*=1/3$. Observe the two structural features
that the corrected framework makes visible. First, the threshold genuinely
depends on $\kappa$, as it must: a stronger pole yields a smaller
admissible range. Second, the RLCT bound
\[
\overline{p}(\kappa)=\frac{\mathrm{rlct}_0(f)}{\kappa+1}
=\frac{5}{12\kappa+12}
\;<\;\frac{5}{12\kappa+3}=p^*(\kappa)
\]
is \emph{strictly} smaller than the exact threshold, because the gradient
vanishes along $E$ ($M_E=9\ge1$); this strictness is a general phenomenon in
the singular regime (Theorem~\ref{thm:admissibility}\,(iii)).

Both values can be verified by the anisotropic scaling
$x_1=\varepsilon^3X$, $x_2=\varepsilon^2Y$ on dyadic annuli:
$|f|\sim\varepsilon^{12}$, $|\nabla f|\sim\varepsilon^{9}$, hence
$|\nabla K|\sim\varepsilon^{-(12\kappa+3)}$ on regions of volume
$\sim\varepsilon^{5}$, giving convergence exactly for
$(12\kappa+3)p<5$.

The inclusion $\nabla K\in L^p_{\mathrm{loc}}$ for $p<p^*(\kappa)$ provides
the local integrability input for the bilinear estimate
\[
(u,v)\longmapsto u\,(\nabla K*v)
\]
in the Morrey framework of
\cite{suleiman2020aggregation,suleiman2023existence}; see the discussion of
the two mechanisms below.
In contrast, isotropic Hardy--Littlewood--Sobolev estimates based on the
radial majorant $|\nabla K(x)|\le C\,|x|^{-(6\kappa+1)}$ (attained along the
$x_2$-axis) only predict the non-optimal range $p<2/(6\kappa+1)$, e.g.\
$p<2/7$ instead of $p<1/3$ when $\kappa=1$: resolution data recover the
sharp admissibility regime dictated by the geometry of the singularity.

\medskip

\subsubsection*{Local integrability via log resolutions}

Let $\pi:\widetilde X\to X$ be a log resolution of the pair
$(f,\mathcal J_f)$. In adapted local coordinates the total transform of
$(f=0)$ has normal crossings,
\[
\bigcup_i E_i=\{y_i=0\},
\]
and the pullbacks admit the two-sided monomial comparabilities
\[
|f\circ\pi|\asymp\prod_i |y_i|^{\nu_i}, \qquad
|J_\pi|\asymp\prod_i |y_i|^{a_i}, \qquad
|\nabla f|\circ\pi\asymp\prod_i |y_i|^{M_i},
\]
the last by the principalization of $\mathcal J_f$
(cf.\ \eqref{eq:gradf_monomial}), with
$0\le M_i\le \nu_i-1$ by Lemma~\ref{lem:chain_rule}.

Since $K=|f|^{-\kappa}$,
\[
|\nabla K|=\kappa\,\frac{|\nabla f|}{|f|^{\kappa+1}},
\]
and therefore
\[
|\nabla K|\circ\pi
\;\asymp\;
\prod_i |y_i|^{M_i-(\kappa+1)\nu_i}.
\]
Consequently,
\[
\bigl(|\nabla K|\circ\pi\bigr)^p\,|J_\pi|
\;\asymp\;
\prod_i |y_i|^{-p((\kappa+1)\nu_i-M_i)+a_i}.
\]

Because $\pi$ is proper and restricts to a diffeomorphism off a set of
measure zero, local integrability of $|\nabla K|^p$ near the origin reduces
to integrability of this monomial expression on finitely many chart
polydiscs covering $\pi^{-1}(0)$. By Fubini's theorem this is equivalent to
convergence of
\[
\int_0^\varepsilon |y_i|^{-p((\kappa+1)\nu_i-M_i)+a_i}\,dy_i,
\]
which holds iff
\[
-p\bigl((\kappa+1)\nu_i-M_i\bigr)+a_i>-1,
\quad\text{i.e.}\quad
p<\frac{a_i+1}{(\kappa+1)\nu_i-M_i}.
\]
Note that the denominators are positive since $M_i\le\nu_i-1$ implies
$(\kappa+1)\nu_i-M_i\ge\kappa\nu_i+1$.

\subsubsection*{Two complementary mechanisms controlling convolution}

There are two analytic mechanisms controlling the operator \(T_K\). Only the
first is the one actually used by the well-posedness theorem of
\cite{suleiman2020aggregation,suleiman2023existence} (Lemma~\ref{lem:bilinear}
above); the second is recorded for completeness as a general fact about
convolution in Morrey spaces, valid on its own terms, but it is \emph{not}
the hypothesis invoked by that reference. Throughout, we assume $K$ is
specified, via Hypothesis~\ref{hyp:dominant}, on a neighborhood
$B_{\varepsilon_0}(0)$ of the singularity; for the global operator $T_K$ on
$\mathbb R^n$ one further assumes $K$ is defined and smooth outside a
compact set containing $B_{\varepsilon_0}(0)$, so that only the local
behavior near $0$ is governed by the geometric analysis of this section.

\medskip
\noindent\textbf{(A) Global $L^1$ criterion (the hypothesis of \cite{suleiman2020aggregation,suleiman2023existence}).}
Decompose $\nabla K=\chi_{B_\varepsilon}\nabla K+(1-\chi_{B_\varepsilon})\nabla K$
for $\varepsilon<\varepsilon_0$. The far part is bounded and smooth on
$B_{\varepsilon_0}(0)\setminus B_\varepsilon(0)$ by Hypothesis~\ref{hyp:dominant},
and is bounded with compact support outside $B_{\varepsilon_0}(0)$ by the
global assumption above; in particular it is a finite measure. The
divisorial characterisation of Theorem~\ref{thm:admissibility} gives a
computable criterion for the near part to also be a finite measure: one has
$\nabla K\in L^1_{\mathrm{loc}}$ near the origin if and only if
\[
1<p^*(\kappa)=\min_i\frac{a_i+1}{(\kappa+1)\nu_i-M_i},
\qquad\text{i.e.}\qquad
a_i\ge(\kappa+1)\nu_i-M_i\ \text{ for all } i.
\]
When $p^*(\kappa)>1$, both pieces are finite measures, so
$\nabla K\in L^1(\mathbb R^n)$ globally, with
\[
\|\nabla K\|_{L^1(\mathbb R^n)}
=\|\chi_{B_\varepsilon}\nabla K\|_{L^1}+\|(1-\chi_{B_\varepsilon})\nabla K\|_{L^1}
<\infty,
\]
and Lemma~\ref{lem:young_morrey}(i)--(ii) yields
$\|\nabla K*u\|_{M_{p,\lambda}}\le\|\nabla K\|_{L^1}\|u\|_{M_{p,\lambda}}$.
This is precisely the hypothesis $\nabla K\in L^1$ of
\cite[Thm.~3.1]{suleiman2023existence}, and Lemma~\ref{lem:bilinear} above
is the resulting bilinear estimate, with constants explicit multiples of
$\|\nabla K\|_{L^1(\mathbb R^n)}$.

Conversely, if $p^*(\kappa)\le1$, then $\nabla K\notin L^1_{\mathrm{loc}}$
near the origin (Theorem~\ref{thm:admissibility}(i) with $p=1$), so
\emph{no} choice of global truncation or decay of $K$ away from the
singularity can repair $\nabla K\in L^1(\mathbb R^n)$: local failure at the
singularity is an obstruction that no modification at infinity can cure.
For such kernels the hypothesis of \cite[Thm.~3.1]{suleiman2023existence}
fails, independently of the Morrey parameters $(p,\lambda)$ of the solution
space.

\medskip
\noindent\textbf{(B) Local $L^p_{\mathrm{loc}}$ estimates (general fact, not
used below).} For any $1\le p<p^*(\kappa)$, the local norms
$\|\nabla K\|_{L^p(B_\varepsilon)}$ are finite by
Theorem~\ref{thm:admissibility}(i), and a standard Hölder--Young argument in
Morrey spaces (Lemma~\ref{lem:holder_morrey} together with a
Hardy--Littlewood--Sobolev-type convolution estimate) would in principle
bound $\nabla K*u$ in terms of such local $L^p$ norms for $p\ne1$. We record
this only to note that it is \emph{not} what
\cite{suleiman2020aggregation,suleiman2023existence} require: their
Theorem~3.1 and its bilinear estimate (Lemma~\ref{lem:bilinear} above) use
exclusively the global $L^1$ estimate of mechanism (A). Since we do not use
mechanism (B) in what follows, we do not develop it further.

\begin{remark}[Two regimes]
\label{rem:three_regimes}
In view of mechanism (A), the relevant trichotomy collapses to:
\begin{itemize}
\item[\textbf{Regime A}] ($p^*(\kappa)>1$): $\nabla K\in L^1_{\mathrm{loc}}$
near the singularity, and---once $K$ is modified to be smooth and
compactly supported (or otherwise globally integrable in gradient) away
from a neighborhood of the singularity---$\nabla K\in L^1(\mathbb R^n)$,
so the hypothesis of \cite[Thm.~3.1]{suleiman2023existence} is satisfied.
This is the regime made precise in Section~\ref{sec:morrey_bridge}.
\item[\textbf{Regime C}] ($p^*(\kappa)\le1$): $\nabla K\notin L^1_{\mathrm{loc}}$
near the singularity, and no global modification can repair this; the
kernel falls outside the scope of \cite{suleiman2020aggregation,suleiman2023existence}
as formulated, and would require a different functional framework
(e.g.\ a genuinely fractional or measure-theoretic reformulation of the
bilinear estimate).
\end{itemize}
Crucially, this dichotomy is governed by $p^*(\kappa)$ \emph{alone}: it does
not involve the Morrey parameters $(p,\lambda)=(n-\lambda,\lambda)$ of the
solution space at all, since those enter only through the choice of the
space $M_{p,\lambda}$ in which the initial data $u_0$ is posed
(Section~\ref{sec:morrey_framework}), not through the integrability
requirement on $\nabla K$.
\end{remark}

Thus the real log--canonical threshold identifies the dominant geometric scale
governing integrability, while the exact $L^p_{\mathrm{loc}}$ threshold for
the velocity field is determined by the finer resolution--theoretic data
appearing in the ratios $\frac{a_i+1}{(\kappa+1)\nu_i-M_i}$.

\medskip

We now state the main result, which gives a complete geometric
characterisation of the exact $L^p_{\mathrm{loc}}$ integrability threshold
of $\nabla K$ together with the role of the RLCT; its consequence for
Morrey admissibility proper is drawn out in
Corollary~\ref{cor:morrey_bridge} of Section~\ref{sec:morrey_bridge}.

\begin{theorem}[Exact gradient-integrability threshold]
\label{thm:admissibility}
Let $f$ be a real-analytic function on a neighborhood of $0\in\mathbb R^n$,
not identically zero, with $f(0)=0$, let $\kappa>0$, and set
$K=|f|^{-\kappa}$. Let $\pi:\widetilde X\to X$ be a log-resolution of the
pair $(f,\mathcal J_f)$ over a neighborhood of the origin, and denote by
$(\nu_i, M_i, a_i)$ the vanishing orders of $f$, the vanishing orders of the
Jacobian ideal $\mathcal J_f$, and the discrepancy exponents along the
components $E_i$ of the total transform of $(f=0)$ meeting $\pi^{-1}(0)$
(exceptional divisors and strict-transform components alike). Recall from
Lemma~\ref{lem:chain_rule} that $0\le M_i\le\nu_i-1$.

\begin{enumerate}
\item[\textup{(i)}] \textbf{Exact admissibility characterisation.}
\[
\nabla K \in L^p_{\mathrm{loc}}
\quad\Longleftrightarrow\quad
p < p^*(\kappa) := \min_i \frac{a_i+1}{(\kappa+1)\nu_i-M_i}.
\]
In particular, $\mathrm{MTI}(K) = p^*(\kappa)$.

\item[\textup{(ii)}] \textbf{RLCT bounds.} Using $0\le M_i\le\nu_i-1$
divisor by divisor,
\[
\overline{p}(\kappa)
:=\frac{\mathrm{rlct}_0(f)}{\kappa+1}
=\frac{1}{\kappa+1}\min_i\frac{a_i+1}{\nu_i}
\;\le\;
p^*(\kappa)
\;\le\;
\min_i\frac{a_i+1}{\kappa\nu_i+1},
\]
so that $p < \overline{p}(\kappa)$ yields a computable sufficient condition
for $\nabla K\in L^p_{\mathrm{loc}}$. In particular,
$\overline{p}(\kappa)>1$ (equivalently $\kappa+1<\mathrm{rlct}_0(f)$) gives a
computable sufficient condition for $p^*(\kappa)>1$, i.e.\ for Regime A of
Remark~\ref{rem:three_regimes}, which is what feeds into Morrey
admissibility via Corollary~\ref{cor:morrey_bridge}.

\item[\textup{(iii)}] \textbf{Strictness of the RLCT bound.}
Equality $p^*(\kappa)=\overline{p}(\kappa)$ holds if and only if some
divisor $E_i$ realizing $\mathrm{rlct}_0(f)$ satisfies $M_i=0$. By
Lemma~\ref{lem:chain_rule}, on any such component
$M_i=0\iff\nu_i=1$ and $\mathcal J_f$ is generically nonvanishing along
$E_i$; hence $E_i$ is necessarily a reduced, generically smooth
strict-transform component. In particular:
\begin{itemize}
\item if $\nabla f(0)\neq0$ (so that $\{f=0\}$ is a smooth hypersurface near
$0$ after reduction), then
$p^*(\kappa)=\overline{p}(\kappa)=\tfrac{1}{\kappa+1}$;
\item if the minimum defining $\mathrm{rlct}_0(f)$ is attained only by
divisors along which $\mathcal J_f$ vanishes---in particular, whenever $f$
has an isolated zero at the origin and $n\ge2$, since then $\nabla f(0)=0$
and every real divisorial component $E_i$ meeting $\pi^{-1}(0)$ and
contributing to the divisorial minima defining $p^*(\kappa)$ and
$\mathrm{rlct}_0(f)$ (Remark~\ref{rem:real_resolution}) necessarily maps to
the origin---the inequality is strict:
$p^*(\kappa)>\overline{p}(\kappa)$.
\end{itemize}
\end{enumerate}

Under Hypothesis~\ref{hyp:dominant}, part~\textup{(i)} gives the exact
$L^p$-threshold of the original kernel $K$ in place of the model
$|f|^{-\kappa}$. The inclusion $\nabla K\in L^p_{\mathrm{loc}}$ for
exponents $p < p^*(\kappa)$ provides the local $L^p$-integrability input
required by the well-posedness framework of
\cite{suleiman2020aggregation,suleiman2023existence}, as discussed in
mechanisms (A)--(B) above.
\end{theorem}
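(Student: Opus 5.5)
The plan is to pull everything back to the log-resolution $\pi$ and reduce to monomial integrals, as in the computation preceding the statement. For (i), fix a compact neighbourhood $U$ of the origin and cover the compact set $\pi^{-1}(\overline U)$ by finitely many adapted chart polydiscs $P_\alpha$. On each $P_\alpha$ the comparabilities
\[
|f\circ\pi|\asymp\prod|y_i|^{\nu_i},\qquad
|J_\pi|\asymp\prod|y_i|^{a_i},\qquad
|\nabla f|\circ\pi\asymp\prod|y_i|^{M_i}
\]
hold, the last one by \eqref{eq:gradf_monomial}. Since $\pi$ is a diffeomorphism off a null set, the change-of-variables formula gives
\[
\int_U|\nabla K|^p\,dx\asymp\sum_\alpha\int_{P_\alpha}\prod_i|y_i|^{-p((\kappa+1)\nu_i-M_i)+a_i}\,dy .
\]
By Tonelli this is a product of one-dimensional integrals. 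It is finite iff each exponent exceeds $-1$ for every $E_i$ actually appearing in some chart meeting $\pi^{-1}(0)$. Shrinking $U$, only components meeting $\pi^{-1}(0)$ matter, which gives $p<p^*(\kappa)$.

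For the converse direction I need a lower bound. Pick a chart centred at a generic point of the offending $E_i$, away from the other components, so that only $y_i$ enters the monomial. Then a divergent one-dimensional integral over a set of positive measure forces $\nabla K\notin L^p(U)$. Here I must check that the image of that chart is contained in every neighbourhood of $0$; this holds because $E_i$ meets $\pi^{-1}(0)$, and one can take the chart near a point of $E_i\cap\pi^{-1}(0)$. Some care is needed when $E_i\cap\pi^{-1}(0)$ lies in other divisors. In that case divergence of $\int|y_i|^{\beta}$ near $y_i=0$, uniformly in the remaining variables on a positive-measure set where they are bounded away from $0$, still suffices.

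Part (ii) is then divisorwise arithmetic. From $0\le M_i\le\nu_i-1$ we get
\[
(\kappa+1)\nu_i\ \ge\ (\kappa+1)\nu_i-M_i\ \ge\ \kappa\nu_i+1,
\]
hence
\[
\frac{a_i+1}{(\kappa+1)\nu_i}\ \le\ \frac{a_i+1}{(\kappa+1)\nu_i-M_i}\ \le\ \frac{a_i+1}{\kappa\nu_i+1}.
\]
Taking minima over $i$ and using Remark~\ref{rem:rlct_divisorial}, $\min_i(a_i+1)/\nu_i=\mathrm{rlct}_0(f)$, gives both inequalities. The sufficiency statements follow directly from (i).

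For (iii), the ratio for $E_i$ equals its RLCT term exactly when $M_i=0$, and strictly exceeds it otherwise. If some minimiser $E_{i_0}$ of $\mathrm{rlct}_0$ has $M_{i_0}=0$, then
\[
p^*\le\frac{a_{i_0}+1}{(\kappa+1)\nu_{i_0}}=\overline p\le p^*,
\]
so equality holds. Conversely, suppose every minimiser has $M_i\ge1$. For minimisers the ratio is strictly larger than $\overline p$. For non-minimisers, $\frac{a_i+1}{(\kappa+1)\nu_i-M_i}\ge\frac{a_i+1}{(\kappa+1)\nu_i}>\overline p$. Since there are finitely many $i$, $p^*>\overline p$.

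The structural description of $M_i=0$ components is Lemma~\ref{lem:chain_rule}. The first bullet follows because for $\nabla f(0)\neq0$ the identity map is already a log-resolution with a single component having $\nu=1$, $a=0$, $M=0$, giving $1/(\kappa+1)$. For the second bullet, an isolated real zero with $n\ge2$ means no real strict-transform component meets $\pi^{-1}(0)$, so all contributing $E_i$ satisfy $\pi(E_i)=\{0\}$. Moreover $\nabla f(0)=0$, since otherwise $\{f=0\}$ would be a hypersurface. Hence $M_i\ge1$ by Lemma~\ref{lem:chain_rule}.

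The main obstacle I expect is the lower-bound half of (i): guaranteeing that a divergent monomial direction really lives over an arbitrarily small neighbourhood of the origin and involves real points. I would handle this by invoking Remark~\ref{rem:real_resolution}, which says every $E_i$ in the minima is a genuine real component. I would also rely on the fact that real analytic hypersurfaces $E_i$ have real points of positive $(n-1)$-dimensional measure near $E_i\cap\pi^{-1}(0)$.
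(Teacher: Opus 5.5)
Your proposal is correct and follows essentially the same route as the paper: pull back along the log-resolution, reduce $\int_U|\nabla K|^p$ via change of variables and Fubini/Tonelli to products of one-dimensional monomial integrals, and get divergence by localizing near a point of $E_j\cap\pi^{-1}(0)$. Then (ii) is divisor-by-divisor arithmetic from $0\le M_i\le\nu_i-1$, and (iii) is the observation that each divisor's ratio equals its RLCT term exactly when $M_i=0$. The only difference is cosmetic: for the first bullet of (iii) you use the identity as the log-resolution. That is legitimate because (i) makes $p^*(\kappa)$ independent of the chosen resolution; alternatively, all $M_i=0$ there, since $\mathcal J_f$ is the unit ideal near $0$.
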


\begin{proof}
By construction, in each local chart of $\widetilde X$ there exist adapted
local coordinates $(y_1,\dots,y_n)$ such that the pullbacks of $f$ and of the
Jacobian determinant of $\pi$ admit monomial factorizations, and the pullback
of the Jacobian ideal $\mathcal J_f$ is principal and monomial:
\[
f\circ\pi
=
u(y)\prod_{i=1}^r y_i^{\nu_i},
\qquad
|J_\pi(y)|
=
v(y)\prod_{i=1}^r |y_i|^{a_i},
\qquad
\mathcal J_f\cdot\mathcal O_{\widetilde X}
=
\Bigl(\prod_{i=1}^r y_i^{M_i}\Bigr),
\]
where $u,v$ are analytic units, $\nu_i\in\mathbb N$ are the vanishing orders
of $f$ along the components $E_i=\{y_i=0\}$ of the total transform,
$a_i\in\mathbb Z_{\ge0}$ are the corresponding discrepancy exponents (with
$a_i=0$ on strict-transform components), and $M_i=\nu_{E_i}(\mathcal J_f)$.
Since the generators $(\partial_j f)\circ\pi$ of the pulled-back ideal equal
the monomial times analytic functions without common zeros, we have the
two-sided comparability \eqref{eq:gradf_monomial},
\[
|\nabla f|\circ\pi
\;\asymp\;
\prod_{i=1}^r |y_i|^{M_i}
\qquad\text{locally on each chart,}
\]
and Lemma~\ref{lem:chain_rule} gives
\[
0\le M_i \le \nu_i-1
\qquad\text{for all } i,
\]
so that $(\kappa+1)\nu_i-M_i\ge\kappa\nu_i+1>0$ and all the ratios below
are well defined.

\medskip
\noindent\emph{Step 1: Proof of part~\textup{(i)}.}
Since $K=|f|^{-\kappa}$, away from $\{f=0\}$,
\[
|\nabla K| = \kappa\,\frac{|\nabla f|}{|f|^{\kappa+1}}.
\]
Pulling back by $\pi$ and using the monomial comparabilities yields, in each
chart,
\[
|\nabla K|\circ\pi
\;\asymp\;
\frac{\prod_i |y_i|^{M_i}}{\prod_i |y_i|^{(\kappa+1)\nu_i}}
=
\prod_{i=1}^r |y_i|^{M_i-(\kappa+1)\nu_i}.
\]
Consequently, for any $p>0$,
\[
\bigl(|\nabla K|\circ\pi\bigr)^p\,|J_\pi|
\;\asymp\;
\prod_{i=1}^r |y_i|^{-p((\kappa+1)\nu_i-M_i)+a_i}.
\]

Let $U\subset X$ be a sufficiently small neighborhood of the origin. Since
$\pi$ is proper, $\pi^{-1}(\overline U)$ is compact and may be covered by
finitely many chart polydiscs, and $\pi$ restricts to a diffeomorphism off a
set of measure zero. By the change of variables formula, $\int_U |\nabla
K|^p\,dx$ is finite if and only if the integral of the displayed monomial
expression converges on each chart polydisc. Since the divisor has simple
normal crossings, Fubini's theorem bounds each chart integral above and
below by constant multiples of
\[
\prod_{i=1}^r \int_0^\varepsilon |y_i|^{-p((\kappa+1)\nu_i-M_i)+a_i}\,dy_i,
\]
and each factor converges if and only if
$-p((\kappa+1)\nu_i-M_i)+a_i>-1$,
i.e.\ $p<(a_i+1)/\bigl((\kappa+1)\nu_i-M_i\bigr)$. For the divergence
direction, if $p\ge(a_j+1)/\bigl((\kappa+1)\nu_j-M_j\bigr)$ for some $j$,
one restricts the integral to a compact region of a chart meeting $E_j$ on
which the comparability constants are uniform and the remaining variables
range over a fixed compact set, and the $y_j$-integral diverges.
Taking the minimum over $i$ completes the proof of part~(i).

\medskip
\noindent\emph{Step 2: Proof of part~\textup{(ii)}.}
Both inequalities follow divisor by divisor from
$0\le M_i\le\nu_i-1$. From $M_i\ge0$,
\[
\frac{a_i+1}{(\kappa+1)\nu_i-M_i}
\;\ge\;
\frac{a_i+1}{(\kappa+1)\nu_i}
=
\frac{1}{\kappa+1}\,\frac{a_i+1}{\nu_i},
\]
and taking minima over $i$ gives
$p^*(\kappa)\ge\overline{p}(\kappa)$. From $M_i\le\nu_i-1$,
\[
\frac{a_i+1}{(\kappa+1)\nu_i-M_i}
\;\le\;
\frac{a_i+1}{\kappa\nu_i+1},
\]
and taking minima gives
$p^*(\kappa)\le\min_i(a_i+1)/(\kappa\nu_i+1)$.

\medskip
\noindent\emph{Step 3: Proof of part~\textup{(iii)}.}
For each $i$,
\[
\frac{a_i+1}{(\kappa+1)\nu_i-M_i}
\;\ge\;
\frac{a_i+1}{(\kappa+1)\nu_i},
\]
with equality if and only if $M_i=0$. Hence
$p^*(\kappa)=\min_i(a_i+1)/\bigl((\kappa+1)\nu_i-M_i\bigr)$ equals
$\overline{p}(\kappa)=\min_i(a_i+1)/\bigl((\kappa+1)\nu_i\bigr)$ if and
only if the latter minimum is attained at some divisor $E_j$ with $M_j=0$;
by Lemma~\ref{lem:chain_rule}, such a divisor is necessarily a reduced
strict-transform component along which $\{f=0\}$ is generically smooth.

If $\nabla f(0)\neq0$, then after replacing $f$ by its reduced germ the zero
set is a smooth hypersurface; its strict transform $E_0$ has
$\nu_{E_0}=1$, $M_{E_0}=0$, $a_{E_0}=0$, so $E_0$ realizes
$\mathrm{rlct}_0(f)=1$ (Remark~\ref{rem:rlct_divisorial}) with $M_{E_0}=0$,
giving $p^*(\kappa)=\overline{p}(\kappa)=1/(\kappa+1)$.

Conversely, suppose the minimum defining $\mathrm{rlct}_0(f)$ is attained
only by divisors with $M_i\ge1$. Then for each such divisor the inequality
above is strict, while every other divisor has
$(a_i+1)/\bigl((\kappa+1)\nu_i-M_i\bigr)
\ge(a_i+1)/\bigl((\kappa+1)\nu_i\bigr)
>\overline{p}(\kappa)$; hence $p^*(\kappa)>\overline{p}(\kappa)$.
Finally, if $f$ has an isolated zero at the origin and $n\ge2$, then
necessarily $\nabla f(0)=0$ (otherwise the zero set would be a hypersurface
through $0$), and every real component $E_i$ of the total transform
contributing to the minima defining $p^*(\kappa)$ and $\mathrm{rlct}_0(f)$
(Remark~\ref{rem:real_resolution}) satisfies $\pi(E_i)=\{0\}$, so
$M_i\ge1$ for all such $i$ by Lemma~\ref{lem:chain_rule}; the strict inequality
follows. This completes the proof.
\end{proof}

\begin{remark}\label{rem:RLCT_role}
Theorem~\ref{thm:admissibility} shows that the real log--canonical threshold
provides a computable sufficient condition for the exact
$L^p_{\mathrm{loc}}$ threshold of $\nabla K$ (and, via
Corollary~\ref{cor:morrey_bridge}\,(ii), a computable sufficient condition
for Morrey admissibility), while the exact threshold is determined by the
finer resolution data. In particular, the
bound $p<\overline{p}(\kappa)=\mathrm{rlct}_0(f)/(\kappa+1)$ is generally
not sharp, but it captures the dominant geometric contribution to local
integrability. The RLCT bound is sharp precisely when the divisors realizing
$\mathrm{rlct}_0(f)$ include a reduced strict-transform component along which
$\{f=0\}$ is generically smooth (equivalently, a divisor with $M_i=0$); this
is the mildly singular regime, typified by smooth $f$. In the genuinely
singular regime---in particular whenever $f$ has an isolated zero at the
origin in dimension $n\ge2$---the bound is always strict, because the
gradient of $f$ necessarily vanishes along every divisor lying over the
origin. The gap between $\overline{p}(\kappa)$ and $p^*(\kappa)$ is thus a
quantitative measure of the degeneracy of $\nabla f$ along the dominant
divisors, and the sharp threshold is genuinely divisorial rather than an
RLCT phenomenon.
\end{remark}

\begin{proposition}
\label{prop:RLCT_vs_exact}
Let $f$ be a real-analytic function with $f(0)=0$, $f\not\equiv0$, let
$\kappa>0$, and let $(\nu_i,M_i,a_i)$ be the numerical data associated with
a log-resolution of $(f,\mathcal J_f)$ as in
Theorem~\ref{thm:admissibility}. Then
$\overline{p}(\kappa)\le p^*(\kappa)$, with equality if and only if some
divisor realizing $\mathrm{rlct}_0(f)$ satisfies $M_i=0$.
\end{proposition}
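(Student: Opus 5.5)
This is the inequality and equality statement of Theorem~\ref{thm:admissibility}(ii)--(iii), isolated as a purely arithmetic comparison of two finite minima. The plan is to work divisor by divisor with the numerical data $(\nu_i,M_i,a_i)$ and then pass to minima. No further analysis is needed, because both $p^*(\kappa)$ and $\overline p(\kappa)$ have already been expressed through the same finite index set. For $p^*(\kappa)$ this is Theorem~\ref{thm:admissibility}(i). For $\overline p(\kappa)$ it is the divisorial formula $\mathrm{rlct}_0(f)=\min_i(a_i+1)/\nu_i$ of Remark~\ref{rem:rlct_divisorial}, which gives $\overline p(\kappa)=\min_i (a_i+1)/\bigl((\kappa+1)\nu_i\bigr)$.

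First, for each $i$ set $r_i:=(a_i+1)/\bigl((\kappa+1)\nu_i-M_i\bigr)$ and $s_i:=(a_i+1)/\bigl((\kappa+1)\nu_i\bigr)$. By Lemma~\ref{lem:chain_rule}, $0\le M_i\le\nu_i-1$, so both denominators are positive. Since $a_i+1>0$, we get $r_i\ge s_i$, with equality if and only if $M_i=0$. Taking minima over the common index set gives $p^*(\kappa)=\min_i r_i\ge\min_i s_i=\overline p(\kappa)$.

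Second, for the equality case I would argue both directions.

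\emph{If.} Suppose $E_j$ realizes $\mathrm{rlct}_0(f)$, i.e.\ $s_j=\min_i s_i$, and $M_j=0$. Then $p^*(\kappa)\le r_j=s_j=\overline p(\kappa)$, and combined with the first step this gives equality.

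\emph{Only if.} Suppose every $j$ with $s_j=\overline p(\kappa)$ has $M_j\ge1$. For those $j$ we have $r_j>s_j=\overline p(\kappa)$. For every other $i$ we have $r_i\ge s_i>\overline p(\kappa)$. Since the index set is finite, the minimum of the $r_i$ is strictly greater than $\overline p(\kappa)$.

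The main point needing care is that the minima defining $\mathrm{rlct}_0(f)$ and $p^*(\kappa)$ run over the same collection of divisors. That collection is the real components of the total transform meeting $\pi^{-1}(0)$, strict-transform components included, as fixed in Remark~\ref{rem:real_resolution}. It is also necessary that this collection is finite; this follows from properness of $\pi$ and compactness of $\pi^{-1}(\overline U)$. I would state this explicitly so the comparison of minima is legitimate. Beyond that, the argument is elementary, and the identification of $M_i=0$ divisors as reduced, generically smooth strict-transform components is exactly Lemma~\ref{lem:chain_rule}, which can be quoted if desired.
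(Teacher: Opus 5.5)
Your argument is correct and is essentially the paper's own proof: the paper obtains this proposition directly from Steps 2--3 of the proof of Theorem~\ref{thm:admissibility}, which compare $(a_i+1)/((\kappa+1)\nu_i-M_i)$ with $(a_i+1)/((\kappa+1)\nu_i)$ divisor by divisor (equality iff $M_i=0$) and then pass to minima, handling both directions of the equality case exactly as you do. Your explicit check that both minima run over the same finite set of real components meeting $\pi^{-1}(0)$ is left implicit in the paper, but it is the same argument.
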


\begin{proof}
This is the content of Theorem~\ref{thm:admissibility}\,(ii)--(iii).
\end{proof}

\begin{corollary}
\label{cor:sharpness}
Under the assumptions of Proposition~\ref{prop:RLCT_vs_exact}:

\begin{enumerate}
\item[(i)] If $\nabla f(0)\neq0$, then
$p^*(\kappa)=\overline{p}(\kappa)=1/(\kappa+1)$: the threshold is governed
by the smooth reduced zero set, and the RLCT bound is attained.

\item[(ii)] If $f$ has an isolated zero at the origin and $n\ge2$, then
$\overline{p}(\kappa)<p^*(\kappa)$ strictly. In particular this covers every
germ with isolated real zero already given in coordinates adapted in the
sense of Theorem~\ref{thm:real-newton}: for these, the RLCT bound is
computable via the Newton distance, but it systematically
\emph{underestimates} the exact threshold.

\item[(iii)] The size of the gap is controlled by the gradient orders: for
every divisor $E_j$ realizing $\mathrm{rlct}_0(f)$,
\[
p^*(\kappa)\;\le\;\frac{a_j+1}{(\kappa+1)\nu_j - M_j},
\]
so larger vanishing $M_j$ of the Jacobian ideal along the dominant divisors
yields a larger exact threshold relative to the RLCT prediction.
\end{enumerate}
\end{corollary}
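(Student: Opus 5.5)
The plan is to read all three items off Theorem~\ref{thm:admissibility} and Lemma~\ref{lem:chain_rule}, with no new analysis. Throughout, write $r_i:=(a_i+1)/\nu_i$, so that $\mathrm{rlct}_0(f)=\min_i r_i$ by Remark~\ref{rem:rlct_divisorial}. For each divisor, the $p^*$-ratio is
\[
\frac{a_i+1}{(\kappa+1)\nu_i-M_i}\;\ge\;\frac{r_i}{\kappa+1},
\]
with equality exactly when $M_i=0$.

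\emph{Item (i).} Assume $\nabla f(0)\neq0$. Then $\{f=0\}$ is a smooth real hypersurface through $0$, and $f$ is a unit times a local coordinate. In this case no blow-up is needed: take $\pi$ to be the identity, which is already a log-resolution of $(f,\mathcal J_f)$ because $\mathcal J_f$ is the unit ideal. The only component meeting $\pi^{-1}(0)$ is $E_0=\{f=0\}$, with data $\nu_0=1$, $M_0=0$, $a_0=0$. Hence
\[
p^*(\kappa)=\frac{1}{\kappa+1}, \qquad \mathrm{rlct}_0(f)=1, \qquad \overline p(\kappa)=\frac{1}{\kappa+1}.
\]
This is the first bullet of Theorem~\ref{thm:admissibility}(iii). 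One point needs care: the invariants must not depend on the chosen resolution. Since $\mathrm{rlct}_0(f)$ and $p^*(\kappa)=\mathrm{MTI}(K)$ are intrinsic analytic quantities by part (i) of the theorem, computing them with the trivial resolution is legitimate.

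\emph{Item (ii).} Assume $f$ has an isolated zero at $0$ and $n\ge2$. Then $\nabla f(0)=0$; otherwise the implicit function theorem would make the zero set a hypersurface through $0$. Every real component $E_i$ that enters the minima and meets $\pi^{-1}(0)$ must satisfy $\pi(E_i)\subset\{f=0\}=\{0\}$ near the origin, so it is exceptional over the origin. Lemma~\ref{lem:chain_rule} then gives $M_i\ge1$ for every such $i$, so each ratio inequality above is strict. Since there are finitely many divisors, taking minima of finitely many strict inequalities yields
\[
p^*(\kappa)>\frac{\min_i r_i}{\kappa+1}=\overline p(\kappa).
\]
This is the second bullet of Theorem~\ref{thm:admissibility}(iii). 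The appended remark about germs adapted in the sense of Theorem~\ref{thm:real-newton} is a special case. There the RLCT is computed from the Newton distance, which I take as given from that later theorem; its isolated-zero hypothesis places it under the strict inequality just proved.

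\emph{Item (iii).} This is immediate, since a minimum is at most each of its terms:
\[
p^*(\kappa)\le\frac{a_j+1}{(\kappa+1)\nu_j-M_j}\quad\text{for any } j,
\]
in particular for a $j$ realizing $\mathrm{rlct}_0(f)$. For the monotonicity reading, note that for fixed $a_j$ and $\nu_j$ the right-hand side increases in $M_j$, because the denominator decreases and stays positive: Lemma~\ref{lem:chain_rule} gives $M_j\le\nu_j-1$, so the denominator is at least $\kappa\nu_j+1>0$.

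The only step I expect to need genuine care is in (ii): showing that no strict-transform component meeting $\pi^{-1}(0)$ contributes when the real zero set is just $\{0\}$. I would argue through Remark~\ref{rem:real_resolution}. That remark admits only real components whose real locus meets the fiber. A strict-transform component would map onto a positive-dimensional real branch of $\{f=0\}$, and such a branch does not exist. The components left over from the complexification are excluded by that remark's bookkeeping.
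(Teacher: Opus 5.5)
Your proposal is correct and follows essentially the paper's own route: the corollary is read off from Theorem~\ref{thm:admissibility}(ii)--(iii) (Step~3 of its proof, using Lemma~\ref{lem:chain_rule} to get $M_i\ge1$ on every real component over the origin in the isolated-zero case, then a strict comparison of finitely many ratios), and (iii) is just that a minimum is at most each of its terms. The only small variation is in (i): you compute with the identity as a log-resolution and invoke resolution-independence through $p^*(\kappa)=\mathrm{MTI}(K)$, whereas the paper stays in an arbitrary resolution and notes that the strict transform $E_0$ (with $\nu=1$, $M=0$, $a=0$) realizes $\mathrm{rlct}_0(f)=1$, which implicitly rests on the same intrinsic value.
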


\subsection*{A closed formula in the quasi-homogeneous case}

For quasi-homogeneous singularities the exact threshold admits a closed
formula that requires no explicit resolution, and which we will use
systematically in Section~\ref{sec:examples}. Recall that $f$ is
quasi-homogeneous with weights $w=(w_1,\dots,w_n)\in\mathbb Z_{>0}^n$ and
weighted degree $d$ if
\[
f\bigl(\varepsilon^{w_1}x_1,\dots,\varepsilon^{w_n}x_n\bigr)
=\varepsilon^{d} f(x)
\qquad\text{for all }\varepsilon>0 .
\]
We write $|w|=w_1+\cdots+w_n$ and $w_{\max}=\max_i w_i$, and denote by
$Z_{\mathbb R}(f)$ the real zero set of $f$ near the origin.

\begin{proposition}[Quasi-homogeneous threshold]
\label{prop:quasihomog}
Let $f$ be a quasi-homogeneous polynomial with weights $w$, degree $d$, an
isolated critical point at the origin, and depending on every variable, and
let $\kappa>0$, $K=|f|^{-\kappa}$. Then:
\begin{enumerate}
\item[(i)] if $Z_{\mathbb R}(f)=\{0\}$,
\[
p^*(\kappa)=\frac{|w|}{\kappa d + w_{\max}};
\]
\item[(ii)] if $Z_{\mathbb R}(f)\supsetneq\{0\}$,
\[
p^*(\kappa)=\min\Bigl(\frac{|w|}{\kappa d + w_{\max}},\,
\frac{1}{\kappa+1}\Bigr),
\]
the second value being the contribution of the smooth part of the zero set.
\end{enumerate}
In either case,
$\mathrm{rlct}_0(f)\le|w|/d$, so
$\overline{p}(\kappa)\le(|w|/d)/(\kappa+1)$, and the RLCT bound is strict
in case \textup{(i)} and in case \textup{(ii)} whenever
$|w|/(\kappa d+w_{\max})<1/(\kappa+1)$.
\end{proposition}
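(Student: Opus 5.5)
The plan is to bypass an explicit log-resolution and compute $\mathrm{MTI}(K)$ directly by quasi-homogeneous dyadic scaling. This is legitimate because Theorem~\ref{thm:admissibility}(i) identifies $p^*(\kappa)$ with $\mathrm{MTI}(K)$, which is intrinsic.

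\textbf{Setup and radial reduction.} Let $\delta_\varepsilon x=(\varepsilon^{w_1}x_1,\dots,\varepsilon^{w_n}x_n)$. Fix a quasi-norm $\rho$ with $\rho(\delta_\varepsilon x)=\varepsilon\rho(x)$, and let $S=\{\rho=1\}$. Using the shells $A_k=\{2^{-k-1}<\rho\le 2^{-k}\}=\delta_{2^{-k}}A_0$, one has $|A_k|\asymp 2^{-k|w|}$. Differentiating the homogeneity relation gives
\[
(\partial_j f)(\delta_\varepsilon X)=\varepsilon^{d-w_j}(\partial_j f)(X),
\qquad f(\delta_\varepsilon X)=\varepsilon^d f(X).
\]
\begin{itemize}
\item \emph{Upper bound.} For $\varepsilon\le1$, $|\nabla f|(\delta_\varepsilon X)\le \varepsilon^{d-w_{\max}}|\nabla f(X)|$.
\item \emph{Lower bound.} Choose $j$ with $w_j=w_{\max}$. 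Since $f$ depends on $x_j$, $\partial_j f\not\equiv0$, so $|\nabla f|(\delta_\varepsilon X)\ge \varepsilon^{d-w_{\max}}|\partial_j f(X)|$, and $|\partial_j f(X)|$ is bounded below on some open set $\Omega\subset A_0$.
\end{itemize}
Changing variables $x=\delta_{2^{-k}}X$ then gives, up to constants,
\[
\int_{A_k}|\nabla K|^p\,dx\asymp 2^{-k(|w|-p(\kappa d+w_{\max}))}\,I_k(p),
\]
where $I_k(p)$ is an angular integral over $A_0$ of $|f|^{-(\kappa+1)p}$ times a gradient factor. This factor is squeezed between $|\partial_j f|^p$ and $|\nabla f|^p$.

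\textbf{Case (i).} If $Z_{\mathbb R}(f)=\{0\}$, then $|f|$ is bounded below on $\overline{A_0}$, so $I_k(p)$ is bounded above and also below (via $\Omega$), uniformly in $k$. The geometric series $\sum_k 2^{-k(|w|-p(\kappa d+w_{\max}))}$ converges if and only if $p<|w|/(\kappa d+w_{\max})$.

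\textbf{Case (ii).} The isolated critical point and the Euler relation $\sum w_jx_j\partial_jf=d\,f$ show that $\nabla f\neq0$ on $Z_{\mathbb R}(f)\setminus\{0\}$. Hence $Z_{\mathbb R}(f)\cap\overline{A_0}$ is a compact smooth reduced hypersurface, on which $|f|\asymp\mathrm{dist}$ and $|\nabla f|\asymp1$.
\begin{itemize}
\item The angular integral $I_k(p)$ is finite, uniformly in $k$, exactly when $p<1/(\kappa+1)$. The upper bound uses $|\nabla f(X)|$, which does not depend on $k$.
\item If $p\ge 1/(\kappa+1)$, divergence already occurs on a single shell. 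This gives the second entry of the minimum, and the radial sum gives the first.
\end{itemize}

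\textbf{RLCT statements.} The same scaling applied to $|f|^{-c}$ shows $|f|^{-c}\in L^1_{\mathrm{loc}}$ forces $c<|w|/d$, so $\mathrm{rlct}_0(f)\le|w|/d$.

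For strictness in case (i), invoke Theorem~\ref{thm:admissibility}(iii), since the zero is isolated; $n\ge2$ holds because a one-variable polynomial with $Z_{\mathbb R}(f)=\{0\}$ and $\nabla f(0)=0$ is still covered by the formula directly.

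For case (ii), assume $p^*=|w|/(\kappa d+w_{\max})<1/(\kappa+1)$. Then
\[
\overline p\le \frac{|w|}{d(\kappa+1)}<\frac{|w|}{\kappa d+w_{\max}}.
\]
The second inequality needs $w_{\max}<d$. This holds when $\nabla f(0)=0$, because every monomial then has degree at least $2$, and each variable occurs in some monomial. If $\nabla f(0)\neq0$, Corollary~\ref{cor:sharpness}(i) applies instead.

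\textbf{Main obstacle.} The step I expect to be delicate is making the case (ii) angular estimate uniform in $k$. The squeeze between the single component $|\partial_jf|$ and the full $|\nabla f|$ must still see the $|f|^{-(\kappa+1)p}$ divergence near the zero set. I would handle this by noting that near $Z_{\mathbb R}(f)$ the full gradient is nonvanishing, and using $\varepsilon^{d-w_j}\ge\varepsilon^{d-w_{\max}}$ on that neighborhood to obtain both bounds with the same exponent.
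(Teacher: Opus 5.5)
Your argument is correct and is essentially the paper's proof. It uses the same dyadic quasi-homogeneous rescaling, a $k$-independent upper bound by $\int_{A_0}|\nabla f|^p|f|^{-(\kappa+1)p}\,dx$, and a uniform lower bound from an open set where $\partial_{j^*}f\neq0$ with $w_{j^*}=w_{\max}$. It also shares single-shell divergence near the smooth part of $Z_{\mathbb R}(f)$ and the inequality $w_{\max}<d$ for strictness; that inequality settles case (i) directly for every $n$, so you do not need Theorem~\ref{thm:admissibility}(iii) or the garbled $n\ge2$ remark. The ``main obstacle'' you anticipate does not arise, and the fix you sketch is wrong: for $0<\varepsilon<1$ one has $\varepsilon^{d-w_j}\le\varepsilon^{d-w_{\max}}$, and a uniform lower bound $|\nabla f(\delta_\varepsilon X)|\gtrsim\varepsilon^{d-w_{\max}}$ near $Z_{\mathbb R}(f)$ can fail (e.g.\ $f=xy^2+x^5$, where $\partial_y f$ vanishes on $Z_{\mathbb R}(f)=\{x=0\}$); nothing in your proof needs it, since the divergence near $Z_{\mathbb R}(f)$ is already detected on the single shell $k=0$.
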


\begin{proof}
Write $\delta_\varepsilon(x)=(\varepsilon^{w_1}x_1,\dots,\varepsilon^{w_n}x_n)$
and let $\rho$ be a continuous weighted quasi-norm with
$\rho(\delta_\varepsilon x)=\varepsilon\rho(x)$. Decompose a punctured
neighborhood of the origin into the dyadic annuli
$A_k=\delta_{2^{-k}}(A_0)$, $A_0=\{1/2\le\rho\le1\}$, and change variables:
\[
\int_{A_k}|\nabla K|^p\,dx
=
2^{-k|w|}\int_{A_0}\bigl|\nabla K(\delta_{2^{-k}}x)\bigr|^p\,dx .
\]
Each partial derivative $\partial_j f$ is quasi-homogeneous of degree
$d-w_j$, so
\[
\bigl|\partial_j K(\delta_\varepsilon x)\bigr|
=\kappa\,\varepsilon^{d-w_j}\,|\partial_j f(x)|\cdot
\varepsilon^{-(\kappa+1)d}|f(x)|^{-(\kappa+1)} .
\]
Since $w_j\le w_{\max}$ for every $j$ and $0<\varepsilon<1$, we have
$\varepsilon^{2(d-w_j)}\le\varepsilon^{2(d-w_{\max})}$ for every $j$, so
summing in quadrature over $j$ gives the \emph{upper} bound, uniform on
$A_0$,
\begin{equation}
\label{eq:qh_upper}
\bigl|\nabla K(\delta_\varepsilon x)\bigr|
\;\le\;
C\,\varepsilon^{-(\kappa d + w_{\max})}\,
\frac{|\nabla f(x)|}{|f(x)|^{\kappa+1}}.
\end{equation}
Consequently,
\[
\int_{A_k}|\nabla K|^p\,dx
\;\le\;
C^p\,2^{-k\bigl(|w|-p(\kappa d+w_{\max})\bigr)}\,I(p),
\qquad
I(p):=\int_{A_0}\frac{|\nabla f(x)|^{p}}{|f(x)|^{(\kappa+1)p}}\,dx .
\]

\smallskip
\noindent\emph{Case (i): $Z_{\mathbb R}(f)=\{0\}$.}
Here $f$ does not vanish on the compact set $A_0$, so $I(p)<\infty$ for
every $p$, and if $p(\kappa d+w_{\max})<|w|$ the geometric series in $k$
converges, giving $\nabla K\in L^p_{\mathrm{loc}}$. For the converse, fix an
index $j^*$ with $w_{j^*}=w_{\max}$. We seek a point $x_1\in A_0$ at which
\emph{both} $f(x_1)\ne0$ \emph{and} $\partial_{j^*}f(x_1)\ne0$ hold
simultaneously, so that the lower bound below is finite and nonzero at
once. The first condition holds at every point of $A_0$, since
$Z_{\mathbb R}(f)=\{0\}\notin A_0$. For the second: since $f$ depends on
every variable by hypothesis, $\partial_{j^*}f$ is a nonzero real-analytic
(in fact polynomial) function, so its zero set has empty interior in
$\mathbb R^n$; as $A_0$ has nonempty interior, there is a point
$x_1\in\mathrm{int}(A_0)$ with $\partial_{j^*}f(x_1)\neq0$, and this $x_1$
automatically satisfies $f(x_1)\ne0$ as noted. By continuity, shrinking to a ball
$B_\delta(x_1)\subset A_0$, we have $|\partial_{j^*}f(x)|\ge c_1>0$ on
$B_\delta(x_1)$, while $0<|f(x)|\le c_2<\infty$ throughout the compact set
$A_0$. Hence, keeping only the $j^*$-th term in the sum of squares defining
$|\nabla K(\delta_\varepsilon x)|$,
\[
|\nabla K(\delta_\varepsilon x)|
\;\ge\;
\kappa\,\varepsilon^{-(\kappa+1)d}\,c_2^{-(\kappa+1)}\,
\varepsilon^{d-w_{\max}}\,c_1
\;=\;
c_3\,\varepsilon^{-(\kappa d+w_{\max})}
\qquad\text{on }B_\delta(x_1),
\]
for a constant $c_3>0$ and all small $\varepsilon$. Consequently
\[
\int_{A_k}|\nabla K|^p\,dx
\;\ge\;
c_3^p\,|B_\delta(x_1)|\,2^{-k(|w|-p(\kappa d+w_{\max}))},
\]
and if $p(\kappa d+w_{\max})\ge|w|$ this lower bound does not tend to $0$,
so the series $\sum_k\int_{A_k}|\nabla K|^p\,dx$ diverges and
$\nabla K\notin L^p_{\mathrm{loc}}$. This proves (i).

\smallskip
\noindent\emph{Case (ii): $Z_{\mathbb R}(f)\supsetneq\{0\}$.}
The upper bound \eqref{eq:qh_upper} again gives $\nabla K\in L^p_{\mathrm{loc}}$
whenever $p(\kappa d+w_{\max})<|w|$ \emph{and} $I(p)<\infty$. We now locate
the finiteness threshold for $I(p)$ directly, by a co-area argument that
uses only the existence of a single suitable point, avoiding any genericity
assumption on a distinguished coordinate.

Since the origin is an isolated critical point of $f$, we have
$\nabla f\neq0$ at every point of $Z_{\mathbb R}(f)\cap A_0$; fix any point
$x_0$ in this nonempty compact set. By the implicit function theorem, there
is a ball $B_\delta(x_0)\subset A_0$ on which $Z_{\mathbb R}(f)$ is a smooth
hypersurface and, shrinking $\delta$ if necessary so that $|\nabla f|$ stays
within a factor of $2$ of $|\nabla f(x_0)|\neq0$, we obtain the two-sided bounds
\[
c_1\,\mathrm{dist}\bigl(x,Z_{\mathbb R}(f)\bigr)
\;\le\;
|f(x)|
\;\le\;
c_2\,\mathrm{dist}\bigl(x,Z_{\mathbb R}(f)\bigr),
\qquad x\in B_\delta(x_0),
\]
\[
c_3\;\le\;|\nabla f(x)|\;\le\;c_4,
\qquad x\in B_\delta(x_0),
\]
for constants $0<c_1\le c_2$, $0<c_3\le c_4$. Foliating $B_\delta(x_0)$ by
level sets of $x\mapsto\mathrm{dist}(x,Z_{\mathbb R}(f))$ (smooth near $x_0$
since $\nabla f(x_0)\neq0$), the co-area formula gives
\[
\int_{B_\delta(x_0)}\frac{|\nabla f(x)|^p}{|f(x)|^{(\kappa+1)p}}\,dx
\;\asymp\;
\int_0^{\delta'} t^{-(\kappa+1)p}\,dt,
\]
which is finite if and only if $(\kappa+1)p<1$, and infinite (with the
partial integrals over $B_\delta(x_0)\setminus B_{2^{-k}\delta'}(Z_{\mathbb R}(f))$
diverging as $k\to\infty$) if $(\kappa+1)p\ge1$. Since
$B_\delta(x_0)\subset A_0$ and the integrand is nonnegative,
$I(p)\ge\int_{B_\delta(x_0)}(\cdots)\,dx$, so $I(p)=\infty$ whenever
$(\kappa+1)p\ge1$; combined with the elementary bound
$I(p)\le\sup_{A_0\setminus B_{\delta/2}(x_0)}(\cdots)\cdot|A_0|+\int_{B_\delta(x_0)}(\cdots)$
(the first term finite since $f$ is bounded away from $0$ away from
$Z_{\mathbb R}(f)$, and $Z_{\mathbb R}(f)\cap A_0$ is covered by finitely
many such balls by compactness), we obtain $I(p)<\infty$ if and only if
$(\kappa+1)p<1$.

When $(\kappa+1)p<1$, $I(p)<\infty$ and the upper bound above gives
$\nabla K\in L^p_{\mathrm{loc}}$ provided also $p(\kappa d+w_{\max})<|w|$.
When $(\kappa+1)p\ge1$, the divergence of $I(p)$ already forces
$\int_{A_0}|\nabla K|^p\,dx=\infty$ (taking $k=0$ in the annulus
decomposition, since $|\nabla K|\gtrsim|\nabla f|/|f|^{\kappa+1}$ pointwise
away from where $\nabla K$ might vanish, which does not happen near
$Z_{\mathbb R}(f)$), so $\nabla K\notin L^p_{\mathrm{loc}}$. Symmetrically,
when $p(\kappa d+w_{\max})\ge|w|$, the divergence argument of case (i)
applies verbatim: since $Z_{\mathbb R}(f)\cap A_0$ and the zero set of the
nonzero polynomial $\partial_{j^*}f$ ($w_{j^*}=w_{\max}$) both have empty
interior in $A_0$, their union does too, so there is a point
$x_1\in\mathrm{int}(A_0)$ outside both, at which the same estimate as in
case (i) shows $\int_{A_k}|\nabla K|^p\,dx\gtrsim2^{-k(|w|-p(\kappa d+w_{\max}))}$,
giving divergence and $\nabla K\notin L^p_{\mathrm{loc}}$. Combining,
$\nabla K\in L^p_{\mathrm{loc}}$
if and only if both $p(\kappa d+w_{\max})<|w|$ and $(\kappa+1)p<1$, i.e.\
$p<\min\bigl(|w|/(\kappa d+w_{\max}),\,1/(\kappa+1)\bigr)$. This proves (ii).

\smallskip
Finally, $\mathrm{rlct}_0(f)\le|w|/d$ follows from
$\int_{A_k}|f|^{-c}\asymp2^{-k(|w|-cd)}$. For the strictness claims, note
first that $w_{\max}<d$: since $0$ is a critical point, $f$ contains no
linear monomial, so every monomial involving the variable of maximal weight
has weighted degree strictly greater than $w_{\max}$. Hence
\[
\frac{|w|}{\kappa d+w_{\max}}
\;>\;
\frac{|w|}{(\kappa+1)d}
\;\ge\;
\frac{\mathrm{rlct}_0(f)}{\kappa+1}
=\overline{p}(\kappa),
\]
which gives strictness in case (i), and in case (ii) whenever the
exceptional value is the minimum.
\end{proof}

\begin{remark}
In resolution-theoretic terms, the two values in
Proposition~\ref{prop:quasihomog} are exactly the divisorial ratios of
Theorem~\ref{thm:admissibility}: the weighted blow-up with weights $w$
produces an exceptional divisor $E$ with
$\nu_E=d$, $a_E=|w|-1$, and $M_E=d-w_{\max}$ (each $\partial_j f$ being
quasi-homogeneous of degree $d-w_j$), so that
\[
\frac{a_E+1}{(\kappa+1)\nu_E-M_E}
=\frac{|w|}{\kappa d + w_{\max}},
\]
while a reduced smooth branch of $Z_{\mathbb R}(f)$ contributes
$\nu=1$, $M=0$, $a=0$, hence $1/(\kappa+1)$. The direct dyadic proof above
avoids the bookkeeping of the auxiliary divisors that arise when resolving
the quotient singularities of the weighted blow-up.
\end{remark}

\subsection*{A computational criterion for Morrey admissibility}

We now explain why Newton--polyhedron methods provide an effective computational
tool for determining admissible singularity regimes.
In real dimension two, a fundamental result originating in the work of Varchenko
\cite{varchenko1976newton} and refined by Collins \cite{collins2017real} shows
that, after passing to adapted coordinates, the real Newton distance
determines the real log--canonical threshold under the hypotheses of
Theorem~\ref{thm:real-newton} below.

\begin{definition}[Newton polyhedron]
Let \(f(x,y)=\sum_{(i,j)\in\mathbb{N}^2} a_{ij} x^i y^j\) be a real--analytic germ at
\((0,0)\).
The \emph{Newton polyhedron} \(\NP(f)\) is the convex hull of the set
\[
\bigcup_{\substack{(i,j)\in\mathbb{N}^2\\ a_{ij}\neq 0}}
\bigl((i,j)+\mathbb{R}_{\ge0}^2\bigr)
\subset \mathbb{R}_{\ge0}^2.
\]
\end{definition}

\begin{figure}[h]
\centering
\begin{tikzpicture}[scale=0.9]
\begin{scope}
    \clip (0,0) rectangle (5,5);
    \fill[blue!10] (0,3) -- (2,0) -- (5,0) -- (5,5) -- (0,5) -- cycle;
\end{scope}

\draw[->] (0,0) -- (5,0) node[right] {$i$};
\draw[->] (0,0) -- (0,5) node[above] {$j$};

\draw[thick, red] (0,5) -- (0,3) -- (2,0) -- (5,0);

\fill (2,0) circle (2pt) node[below] {$(2,0)$};
\fill (1,2) circle (2pt) node[above right] {$(1,2)$};
\fill (0,3) circle (2pt) node[left] {$(0,3)$};

\node at (3.5,3.5) {$\NP(f)$};
\end{tikzpicture}
\caption{Newton polyhedron of \(f(x,y)=x^2+xy^2+y^3\). The monomial $(1,2)$
lies strictly in the interior of $\NP(f)$ (since $\tfrac12+\tfrac23>1$) and
does not contribute a vertex: the compact face is the segment joining
$(2,0)$ and $(0,3)$.}
\end{figure}

\begin{definition}[Newton distance]
The \emph{Newton distance} of \(f\) is defined as
\[
d_{\mathrm{NP}}(f)
:= \inf\{\, t>0:\ (t,t)\in\NP(f)\,\}.
\]
Equivalently, it is the first intersection of the diagonal
\(\{(t,t)\}\) with the Newton polyhedron.
\end{definition}

\begin{figure}[h]
\centering
\begin{tikzpicture}[scale=0.9]
\draw[->] (0,0) -- (5,0) node[right] {$i$};
\draw[->] (0,0) -- (0,5) node[above] {$j$};

\draw[thick, red] (0,5) -- (0,3) -- (2,0) -- (5,0);

\draw[dashed, blue] (0,0) -- (4.5,4.5) node[right] {$i=j$};

\coordinate (D) at (1.2, 1.2);
\fill[black] (D) circle (2pt);
\draw[dotted] (D) -- (1.2,0) node[below] {$d_{\mathrm{NP}}$};

\node[above right] at (D) {$(d_{\mathrm{NP}},d_{\mathrm{NP}})$};
\node at (3,3.5) {$\NP(f)$};
\end{tikzpicture}
\caption{Newton distance as the intersection of the diagonal with \(\NP(f)\);
for \(f=x^2+xy^2+y^3\) the compact face is the segment from $(0,3)$ to
$(2,0)$ and $d_{\mathrm{NP}}(f)=6/5$.}
\end{figure}
\begin{theorem}[Collins {\cite[Thm.~1.4]{collins2017real}}]\label{thm:real-newton}

Let \(f\) be a real--analytic germ at \(0\in\mathbb{R}^2\), and let
$\delta_{\mathrm{NP}}(f)^{-1}=\inf\{t>0:(t,t)\in\Gamma_+(f)\}$ be its Newton
distance (computed in the given coordinates). There exists a
real--analytic change of coordinates
\[
(\tilde{x},\tilde{y})=(x-Q(y),y)
\quad \text{or}\quad
(\tilde{x},\tilde{y})=(x,y-Q(x)),
\]
with \(Q\) a suitable real-analytic function, such that, in the new
coordinates,
\[
\operatorname{rlct}_0(f)=\frac{1}{d_{\mathrm{NP}}(f)},
\]
where $d_{\mathrm{NP}}(f)=\delta_{\mathrm{NP}}(f)^{-1}$ is the Newton
distance recomputed in the adapted coordinates. (In Collins' normalization
for the complex case, an extra factor of $2$ appears; the real case, which
is the one used throughout this paper, carries no such factor.) The
adapted coordinates are characterized precisely by Collins'
\cite[Prop.~3.10]{collins2017real}: writing the roots of (a Weierstrass
factorization of) $f$ as Puiseux series clustered by leading exponent, the
coordinates work whenever each cluster of roots sharing a given leading
exponent has cardinality at most $\delta_{\mathrm{NP}}(f)$; when this fails
for the face of $\Gamma_+(f)$ meeting the diagonal (the ``main face''),
Collins' change of variables explicitly removes the offending cluster,
strictly decreasing the Newton distance until the condition holds.
\end{theorem}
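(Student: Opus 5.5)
This is Collins' Theorem~1.4, quoted as an external result, so the plan is to reconstruct its architecture rather than re-derive it. The argument has two parts. First, we show that in \emph{any} coordinates the Newton distance gives a bound for the RLCT. Second, we show that this bound becomes an equality exactly when the Puiseux root-cluster condition holds, and that a finite sequence of shears of the form $x\mapsto x-Q(y)$ achieves that condition.

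\textbf{Step 1 (universal upper bound).} We work with the toric model. Let $\sigma$ be the face of $\Gamma_+(f)$ meeting the diagonal, with primitive inner normal $w=(w_1,w_2)$, and let $m_w(f)$ be the minimal weighted order. On the dyadic weighted annuli $A_k=\delta_{2^{-k}}(A_0)$ we have
\[
\int_{A_k}|f|^{-c}\,dx \gtrsim 2^{-k(|w|-c\,m_w)}.
\]
This is the same computation as in the proof of Proposition~\ref{prop:quasihomog}, applied to the principal part $f_\sigma$ on a small ball in $A_0$ where $f_\sigma\neq0$. It gives $\mathrm{rlct}_0(f)\le |w|/m_w(f)$. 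The diagonal point $(d,d)$ lies on $\sigma$, so $|w|/m_w=1/d_{\mathrm{NP}}(f)$. Hence
\[
\mathrm{rlct}_0(f)\le 1/d_{\mathrm{NP}}(f)
\]
in every coordinate system.

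\textbf{Step 2 (matching lower bound under the cluster condition).} Take the toric log-resolution subordinate to the fan of $\Gamma_+(f)$. Each ray $w'$ of this fan contributes the divisorial ratio $|w'|/m_{w'}(f)$. Each such ratio is at least $1/d_{\mathrm{NP}}$, because the diagonal point lies in $\Gamma_+$.

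The remaining danger comes from the strict transform. Along each compact face $\tau$, the face polynomial $f_\tau$ factors as a product of terms $(x^{a}-\alpha y^{b})^{m_\alpha}$. Each nonzero \emph{real} root $\alpha$ of multiplicity $m_\alpha$ yields, after further resolution, a local contribution. Using the one-dimensional estimate $\int|t|^{-c\,m_\alpha}\,dt$ transversally, this contribution has RLCT $\min\bigl(1/m_\alpha,\ \ldots\bigr)$ relative to the weighted scale.

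We would then show that the overall threshold equals $1/d_{\mathrm{NP}}$ precisely when every real root multiplicity on the main face is at most $d_{\mathrm{NP}}$. Under the Puiseux dictionary, these multiplicities are exactly the sizes of the root clusters sharing a leading exponent. For faces other than the main face, the extra slack in the ratio absorbs the multiplicity.

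\textbf{Step 3 (adapting coordinates).} If a cluster on the main face has size $m>d_{\mathrm{NP}}$, the corresponding real root $\alpha$ is unique. Indeed, $\sum m_\alpha\le \deg f_\sigma$, and $d_{\mathrm{NP}}$ is at least half the weighted degree in the normalized sense, which rules out two roots of multiplicity exceeding $d_{\mathrm{NP}}$. The root $\alpha$ is moreover forced to be real, because complex roots come in conjugate pairs and would not be isolated. The face is then of the form $x=\alpha y^{b}$ with $b$ an integer; Collins shows non-integrality is impossible in this situation.

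We then substitute $x\mapsto x-\alpha y^{b}$, iterating along the Puiseux expansion of that cluster, possibly infinitely often. This produces a real-analytic $Q$, by convergence of the Puiseux branch. Each substitution does not increase the Newton distance, and the process either terminates or converges to coordinates in which the branch becomes $\tilde x=0$. In the latter case the condition $m\le d_{\mathrm{NP}}$ holds, because $\tilde x^{m}$ then sits on an axis-face.

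\textbf{Main obstacle.} The main obstacle is Step 2's converse direction: showing that the cluster condition is sufficient. This requires controlling the nonprincipal higher-order terms near the real roots of $f_\sigma$, i.e.\ showing they do not create a worse divisor after the second round of blow-ups. Here I would invoke Collins' Prop.~3.10 directly, rather than reprove Varchenko-type estimates.
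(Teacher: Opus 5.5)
The paper gives no proof of this statement; it imports it from Collins (ultimately Varchenko's adapted-coordinates theorem), so your outline has to stand on its own. Step~1 is sound. One caveat: when the diagonal meets a noncompact face the weighted quasi-norm is unavailable, but there, say, $x^{a}\mid f$ with $a=d_{\mathrm{NP}}(f)$, and $|f|^{-c}\gtrsim|x|^{-ac}$ near points $(0,y_0)$ with $y_0\neq0$, which gives the same bound.

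The concrete error is in Step~3: the monotonicity is backwards. A shear $x=\tilde x+\alpha y^{b}$ preserves the $(b,1)$-weighted order, so the new polyhedron still lies in $\{bi+j\ge m_w\}$. The new face polynomial on the line $bi+j=m_w$ is divisible by $\tilde x^{m}$, so all its exponents have $i\ge m>d_{\mathrm{NP}}$. Hence $(d_{\mathrm{NP}},d_{\mathrm{NP}})$ leaves $\Gamma_+$ and the Newton distance strictly \emph{increases}. This is what your own Step~1 predicts (adapted coordinates maximize $d_{\mathrm{NP}}$). It is also what your last sentence says: $\tilde x^{m}$ on an axis-face means $d_{\mathrm{NP}}\ge m$, up from $d_{\mathrm{NP}}<m$. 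For example, $f=(x-y^2)^2+y^6$ has $d_{\mathrm{NP}}=4/3$, and $d_{\mathrm{NP}}=3/2$ after $\tilde x=x-y^2$, with $\mathrm{rlct}_0(f)=2/3$. The statement itself garbles the two quantities: its cluster bound must be read with $d_{\mathrm{NP}}$, as you did, while ``strictly decreasing'' is true only of $\delta_{\mathrm{NP}}=1/d_{\mathrm{NP}}$.

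With the correct direction you get the necessity half of your ``precisely when'' for free. A bad root gives $d_{\mathrm{NP}}'>d_{\mathrm{NP}}$ after the shear, and Step~1 in the sheared coordinates yields $\mathrm{rlct}_0(f)<1/d_{\mathrm{NP}}$. Termination should then run as follows. The tracked multiplicities are non-increasing and the integer exponents $b_k$ strictly increase. So a non-terminating iteration forces the surviving cluster to be a single real root $x=Q(y)$, which is convergent because it is a Puiseux root of $f$. Both the uniqueness of the bad root and $\min(a,b)=1$ follow from one inequality: $\sum_\alpha m_\alpha\le d_{\mathrm{NP}}(1/a+1/b)$ for the factors $(x^{a}-\alpha y^{b})^{m_\alpha}$ of the principal face polynomial. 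No appeal to Collins is needed there.

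The second gap is the sufficiency half of Step~2, which you defer to Collins' Prop.~3.10. The missing idea is that no second round of blow-ups is needed. In the toric chart at a real root $s_0$ of multiplicity $m$ on an edge with normal $w$, you have $f\circ\pi=t^{m_w}\tilde g(t,s)$ up to a unit, Jacobian $\asymp|t|^{|w|-1}$, and $\partial_s^{m}\tilde g\neq0$ near $(0,s_0)$. Your pointwise model $|\tilde g|\asymp|s-s_0|^{m}$ is indeed destroyed by the higher-order terms. However, the van der Corput sublevel bound $|\{s:|\tilde g(t,s)|\le\epsilon\}|\lesssim\epsilon^{1/m}$ holds uniformly in $t$. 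So $\int|\tilde g(t,s)|^{-c}\,ds$ is bounded uniformly in $t$ for $c<1/m$, and Fubini with the weight $|t|^{|w|-1-c\,m_w}$ gives a local threshold of at least $\min\bigl(|w|/m_w,1/m\bigr)$.

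Finally, on non-principal edges it is not slack in the ratio that saves you but geometry. An edge to the right of the principal face has all $j$-coordinates at most $d_{\mathrm{NP}}$, so a factor $(x^{a}-\alpha y^{b})^{m}$ forces $mb\le d_{\mathrm{NP}}$, hence $m\le d_{\mathrm{NP}}$; the left side is symmetric. On the principal edge, $m\le d_{\mathrm{NP}}$ is exactly your hypothesis. Together with your bound $|w'|/m_{w'}\ge 1/d_{\mathrm{NP}}$ for every ray, every local contribution is at least $1/d_{\mathrm{NP}}$.
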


\begin{remark}[Adapted coordinates matter]
\label{rem:adapted_coords}
The formula $\mathrm{rlct}_0(f)=1/d_{\mathrm{NP}}(f)$ is valid only in the
adapted coordinates produced by Theorem~\ref{thm:real-newton}, in which
$d_{\mathrm{NP}}(f)\ge1$ (consistently with the fact that
$\mathrm{rlct}_0(f)\le1$ for plane germs vanishing at the origin: a germ of
order $k\ge2$ has $\mathrm{rlct}_0\le2/k\le1$, and a germ of order $1$ has a
smooth reduced zero curve, hence $\mathrm{rlct}_0=1$ by
Remark~\ref{rem:rlct_divisorial}). Applying it in non-adapted coordinates
can overestimate the RLCT: for instance, $f=x+y^n$ has
$d_{\mathrm{NP}}=n/(n+1)<1$ in the given coordinates, but the substitution
$\tilde x=x+y^n$ shows $f$ is smooth with $\mathrm{rlct}_0(f)=1$, not
$(n+1)/n$.
\end{remark}

From a practical perspective, Theorem~\ref{thm:admissibility}, combined with
Proposition~\ref{prop:quasihomog}, offers a direct and computable criterion
for ascertaining the critical Morrey admissibility regime of a singular
interaction potential. Instead of using ad hoc integral estimates, a finite
geometric method can be used to decide whether kernels of the form
$K=|f|^{-\kappa}$ are admissible.

More specifically, for a real-analytic function $f$ satisfying
Hypothesis~\ref{hyp:dominant}, the admissible Morrey exponent is obtained
through the following steps:

\begin{enumerate}
    \item Construct the Newton polyhedron $\Gamma_+(f)$ from the Taylor
    expansion of $f$ at the origin, in adapted coordinates
    (Theorem~\ref{thm:real-newton}).

    \item Find the Newton distance $d_{\mathrm{NP}}(f)$, the intersection
    point of the diagonal with the boundary of $\Gamma_+(f)$. In these
    adapted coordinates (Theorem~\ref{thm:real-newton}), the real
    log-canonical threshold satisfies
    $\mathrm{rlct}_0(f)=1/d_{\mathrm{NP}}(f)$, and
    Theorem~\ref{thm:admissibility}\,(ii) yields the computable sufficient
    condition
    \[
    p \;<\; \overline{p}(\kappa)
    \;=\;\frac{1}{(\kappa+1)\,d_{\mathrm{NP}}(f)}
    \;\Longrightarrow\;
    \nabla K\in L^p_{\mathrm{loc}} .
    \]

    \item If $f$ is quasi-homogeneous (as in all examples of
    Section~\ref{sec:examples}), the \emph{exact} threshold is given in
    closed form by Proposition~\ref{prop:quasihomog}:
    \[
    p^*(\kappa)
    =\frac{|w|}{\kappa d + w_{\max}}
    \quad\text{if } Z_{\mathbb R}(f)=\{0\},
    \]
    \[
    p^*(\kappa)
    =\min\Bigl(\frac{|w|}{\kappa d + w_{\max}},\frac{1}{\kappa+1}\Bigr)
    \quad\text{otherwise,}
    \]
    and $\nabla K \in L^p_{\mathrm{loc}}$ if and only if $p < p^*(\kappa)$.

    \item In the general case, the exact threshold requires, in addition to
    $(\nu_i,a_i)$, the gradient orders $M_i=\nu_{E_i}(\mathcal J_f)$ from a
    log-resolution of $(f,\mathcal J_f)$, via
    Theorem~\ref{thm:admissibility}\,(i).
\end{enumerate}

This process gives a clear characterisation of the maximal integrability of
$\nabla K$: a resolution-free sufficient condition from Newton geometry, and
the exact condition---the one needed as local integrability input for the
fixed-point argument in the Morrey framework---from the divisorial data of
$(f,\mathcal J_f)$.

\subsection{The bridge to Morrey admissibility}
\label{sec:morrey_bridge}

Theorem~\ref{thm:admissibility} is a statement about local
$L^p_{\mathrm{loc}}(\mathbb R^n)$-integrability of $\nabla K$; it does not,
by itself, refer to the well-posedness theorem of
\cite{suleiman2020aggregation,suleiman2023existence}. We now make this
connection precise, and we do so by reading off the exact hypothesis of
\cite[Thm.~3.1]{suleiman2023existence} rather than by paraphrasing it.

Recall from Section~\ref{sec:morrey_framework} that
\cite[Thm.~3.1]{suleiman2023existence} requires $n\ge2$, $0\le\lambda<n-1$,
exponents $1<p<q<\infty$ with $p=n-\lambda$, $1<q/2$, $1/p+1/q<1$, initial
data $u_0\in M_{p,\lambda}$, and---crucially, as the \emph{only} hypothesis
on the kernel---
\[
\nabla K\in L^1(\mathbb R^n).
\]
This is a statement about the \emph{global} $L^1$ norm of $\nabla K$; the
Morrey exponent $p=n-\lambda$ that appears alongside it governs only the
scaling class $M_{p,\lambda}$ in which the initial data $u_0$ is posed
(via the invariance $u_0\mapsto\alpha u_0(\alpha\cdot)$ of the rescaled
equation), and plays no role in the integrability requirement on $\nabla K$
itself: the bilinear estimate of Lemma~\ref{lem:bilinear} is proved in
\cite[Lemma~4.3]{suleiman2023existence} using Young's inequality
$\|\nabla K*u\|_{M_{p,\lambda}}\le\|\nabla K\|_{L^1}\|u\|_{M_{p,\lambda}}$
(mechanism (A) of Section~\ref{subsec:rlct_definition}), for \emph{any}
admissible $(p,\lambda)$, once $\|\nabla K\|_{L^1(\mathbb R^n)}<\infty$.
This is Mechanism (A) alone; we do not use Mechanism (B). Combining this
reading of \cite[Thm.~3.1]{suleiman2023existence} with
Theorem~\ref{thm:admissibility} yields the following bridge.

\begin{corollary}[Morrey admissibility criterion]
\label{cor:morrey_bridge}
Let $K$ satisfy Hypothesis~\ref{hyp:dominant} with data $f$, $\kappa$, near
the origin, and assume in addition that $K$ is modified, outside a fixed
compact neighborhood of the origin, to be smooth with $\nabla K$ compactly
supported (or otherwise satisfying $(1-\chi_{B_{\varepsilon_0}})\nabla K\in L^1(\mathbb R^n)$).
Let $p^*(\kappa)$ be the exact threshold of Theorem~\ref{thm:admissibility}.
Then:
\begin{enumerate}
\item[\textup{(i)}] \textbf{Local criterion.}
$\nabla K\in L^1_{\mathrm{loc}}(\mathbb R^n)$ near the origin if and only if
\[
1 \;<\; p^*(\kappa).
\]
\item[\textup{(ii)}] \textbf{Computable RLCT-based sufficient condition.}
\[
1 \;<\; \frac{\mathrm{rlct}_0(f)}{\kappa+1}
\qquad\Longleftrightarrow\qquad
\kappa+1 \;<\; \mathrm{rlct}_0(f)
\]
implies (i).
\item[\textup{(iii)}] \textbf{Application of \cite[Thm.~3.1]{suleiman2023existence}.}
If \textup{(i)} holds, then $\nabla K\in L^1(\mathbb R^n)$ globally, and the
hypothesis of \cite[Thm.~3.1]{suleiman2023existence} is satisfied for
\emph{every} choice of Morrey parameters $n\ge2$, $0\le\lambda<n-1$,
$p=n-\lambda$, $q\in(p,\infty)$ with $1<q/2$ and $1/p+1/q<1$: the
well-posedness, uniqueness, continuous dependence, and (for odd/even data)
symmetry conclusions of that theorem apply to the aggregation
equation~\eqref{eq:agg} with kernel $K$, for small data $u_0\in M_{p,\lambda}$.
\end{enumerate}
\end{corollary}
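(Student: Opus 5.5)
The plan is to derive all three parts from Theorem~\ref{thm:admissibility}. Two preliminary observations connect it to the general kernel $K$ of Hypothesis~\ref{hyp:dominant}.

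First, transfer from the model to $K$. On $B_{\varepsilon_0}(0)\setminus\{f=0\}$ the two-sided bound \eqref{eq:twoside} gives
\[
c_1^p\,\frac{|\nabla f|^p}{|f|^{(\kappa+1)p}}\;\le\;|\nabla K|^p\;\le\;c_2^p\,\frac{|\nabla f|^p}{|f|^{(\kappa+1)p}} .
\]
The set $\{f=0\}$ is a proper analytic subset, since $f\not\equiv0$, so it has measure zero. Hence $\nabla K\in L^p(B_\varepsilon)$ if and only if $|\nabla f|\,|f|^{-(\kappa+1)}\in L^p(B_\varepsilon)$, and the latter is exactly $\kappa^{-1}|\nabla(|f|^{-\kappa})|$. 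Theorem~\ref{thm:admissibility}(i) then yields $\nabla K\in L^p(B_\varepsilon)$ if and only if $p<p^*(\kappa)$.

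Part (i) is this equivalence with $p=1$. One point needs care: ``$\nabla K\in L^1_{\mathrm{loc}}$ near the origin'' should mean $\nabla K\in L^1(B_\varepsilon)$ for some small $\varepsilon$. Theorem~\ref{thm:admissibility}(i) is stated for neighborhoods of the origin, so it applies directly.

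Part (ii) follows from the lower bound in Theorem~\ref{thm:admissibility}(ii), namely $\overline p(\kappa)=\mathrm{rlct}_0(f)/(\kappa+1)\le p^*(\kappa)$. If $1<\overline p(\kappa)$, then $1<p^*(\kappa)$. The displayed equivalence $1<\mathrm{rlct}_0(f)/(\kappa+1)\iff\kappa+1<\mathrm{rlct}_0(f)$ is immediate because $\kappa+1>0$.

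For part (iii), split $\nabla K=\chi_{B_\varepsilon}\nabla K+(1-\chi_{B_\varepsilon})\nabla K$ with $\varepsilon<\varepsilon_0$, following mechanism (A).
\begin{enumerate}
\item The near piece is in $L^1$ by part (i).
\item On the annulus $B_{\varepsilon_0}\setminus B_\varepsilon$, write $(1-\chi_{B_\varepsilon})\nabla K=\chi_{B_{\varepsilon_0}\setminus B_\varepsilon}\nabla K+(1-\chi_{B_{\varepsilon_0}})\nabla K$. The annulus piece is integrable because the annulus is bounded and $\nabla K\in L^1(B_{\varepsilon_0})$ by Step 1 applied with $\varepsilon=\varepsilon_0$. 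Note that $\nabla K$ need not be bounded there if $\{f=0\}$ extends beyond the origin, so this local-$L^1$ route is safer than the ``bounded and smooth'' claim made in mechanism (A).
\item The far piece $(1-\chi_{B_{\varepsilon_0}})\nabla K$ is in $L^1$ by the standing modification assumption.
\end{enumerate}
Hence $\nabla K\in L^1(\mathbb R^n)$. Since this is the sole kernel hypothesis of \cite[Thm.~3.1]{suleiman2023existence}, as recalled in Lemma~\ref{lem:bilinear}, and it does not involve $(p,\lambda,q)$, that theorem applies for every admissible parameter choice. Its conclusions then follow through Lemmas~\ref{lem:bilinear} and~\ref{lem:fixed_point} for small data.

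The main obstacle is Step 1 when $\varepsilon_0$ is small: Theorem~\ref{thm:admissibility}(i) concerns integrability near the origin, whereas integrability on all of $B_{\varepsilon_0}$ needs control near every point of $\{f=0\}\cap\overline{B_{\varepsilon_0}}$. The first remedy to try is shrinking $\varepsilon_0$ so that the resolution $\pi$ covers $\overline{B_{\varepsilon_0}}$; the chart argument of the theorem then applies over this whole compact set. The theorem's minimum already includes the divisors meeting $\pi^{-1}(0)$, and by properness and upper semicontinuity the nearby fibers only meet divisors whose closures pass over $0$.
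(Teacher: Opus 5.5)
Your proof follows the paper's own route. Part (i) is Theorem~\ref{thm:admissibility}(i) at $p=1$, transferred from $|f|^{-\kappa}$ to $K$ through \eqref{eq:twoside}. Part (ii) is the lower bound $\overline p(\kappa)\le p^*(\kappa)$. Part (iii) splits $\nabla K$ into a near piece, an intermediate piece and the truncated far field, then invokes \cite[Thm.~3.1]{suleiman2023existence} as a black box.

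On the intermediate piece you are more careful than the paper. The paper appeals to ``continuity of $\nabla K$ away from the origin on the fixed compact neighborhood,'' which fails whenever $\{f=0\}$ is positive-dimensional, and this does happen in Regime~A. For $f=x_1^2+x_2^2$ in $\mathbb R^3$ and $\kappa<1/2$ one has $p^*(\kappa)=2/(2\kappa+1)>1$, yet $\nabla K$ is singular along the whole $x_3$-axis. Replacing continuity by $\nabla K\in L^1(B_{\varepsilon_0})$ is the right repair.

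What still needs fixing is your closing remedy. Shrinking $\varepsilon_0$ is not a free step, because the truncation hypothesis is tied to $B_{\varepsilon_0}$ (or to the given compact neighborhood). Passing to $\varepsilon_1<\varepsilon_0$ would require $\nabla K\in L^1(B_{\varepsilon_0}\setminus B_{\varepsilon_1})$, which is exactly what is in question. Nothing in the hypotheses controls zeros of $f$ inside $B_{\varepsilon_0}$ away from the origin.

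Without such smallness, (iii) can actually fail. Take $f=|x|^2(|x|^2-r^2)$ in $\mathbb R^d$ with $d\ge3$, $\kappa=(d-2)/2$ and $0<r<\varepsilon_0$.
\begin{enumerate}
\item The only component meeting $\pi^{-1}(0)$ is the divisor from blowing up the origin, which gives $p^*(\kappa)=d/(d-1)>1$.
\item Near the sphere $|x|=r$ one has $|\nabla K|\asymp\mathrm{dist}(x,\{|x|=r\})^{-d/2}$, so $\nabla K\notin L^1(B_{\varepsilon_0})$.
\item No modification outside $B_{\varepsilon_0}$ can repair this.
\end{enumerate}

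So you should state it as an added hypothesis, which the paper uses implicitly. The region where $K$ is left unmodified must lie inside the small neighborhood $U$ on which the chart argument of Theorem~\ref{thm:admissibility}(i) runs; equivalently, truncate only after shrinking. Your properness observation then supplies what is needed: for $U$ small, $\pi^{-1}(\overline U)$ meets only components that meet $\pi^{-1}(0)$. With that proviso your argument is complete. One small correction: the difficulty arises when $\varepsilon_0$ is \emph{not} small, not ``when $\varepsilon_0$ is small.''
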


\begin{proof}
Part (i) is Theorem~\ref{thm:admissibility}(i) applied with $p=1$. Part (ii)
follows from Theorem~\ref{thm:admissibility}(ii): if
$1<\mathrm{rlct}_0(f)/(\kappa+1)=\overline{p}(\kappa)$ then
$1<\overline{p}(\kappa)\le p^*(\kappa)$, giving (i). For part (iii): under
(i), $\nabla K\in L^1$ on a neighborhood of the origin
(Theorem~\ref{thm:admissibility}(i) with $p=1$, together with continuity of
$\nabla K$ away from the origin on the fixed compact neighborhood), and by
the global modification hypothesis, $\nabla K\in L^1(\mathbb R^n)$ overall.
This is exactly the hypothesis of \cite[Thm.~3.1]{suleiman2023existence}
recalled above, for any admissible $(n,\lambda,p,q)$; the theorem's
conclusion applies verbatim, with the bilinear estimate constants $M_1,M_2$
of Lemma~\ref{lem:bilinear} given explicitly in terms of
$\|\nabla K\|_{L^1(\mathbb R^n)}$.
\end{proof}

We emphasize the logical structure: part (i) is fully proved in this paper,
as a corollary of Theorem~\ref{thm:admissibility}; part (iii) invokes
\cite[Thm.~3.1]{suleiman2023existence} as a black box, applied to the
geometric (and global-modification) input supplied by (i). This separates
cleanly what is established here---the exact geometric threshold governing
$L^1_{\mathrm{loc}}$ integrability of $\nabla K$ near its singularity---from
what is imported from the existing PDE theory. In particular,
the criterion $\kappa+1<\mathrm{rlct}_0(f)$, or more sharply $p^*(\kappa)>1$,
is entirely \emph{independent} of the Morrey parameters $(p,\lambda)$: once
it holds, \cite[Thm.~3.1]{suleiman2023existence} applies simultaneously for
the \emph{whole} admissible range $0\le\lambda<n-1$ of the solution space,
because $\lambda$ enters that theorem only through the space in which
$u_0$ is posed, not through any integrability requirement on $K$.

\begin{remark}[Genuinely singular kernels fail Regime A]
\label{rem:regime_C_examples}
Condition (i) of Corollary~\ref{cor:morrey_bridge} is a genuine constraint on
$\kappa$ and $f$, not an automatic consequence of Hypothesis~\ref{hyp:dominant}.
For a mildly singular kernel ($\nabla f(0)\neq0$, so
$p^*(\kappa)=1/(\kappa+1)\le1$ for every $\kappa>0$ by
Theorem~\ref{thm:admissibility}(iii)), condition (i) fails for every
$\kappa>0$: smooth-type singularities of this kind never enter Regime A.
More strikingly, several of the quasi-homogeneous examples of
Section~\ref{sec:examples} have $p^*(\kappa)<1$ already at $\kappa=1$ (e.g.\
the family $f=x_1^4+x_2^6$ of Section~\ref{sec:examples} below, where
$p^*(1)=1/3$), and hence fail Corollary~\ref{cor:morrey_bridge}(i) at that
value of $\kappa$, although---as we verify in
Section~\ref{subsec:morrey_bridge_check} below---the very same divisorial
type $f=x_1^4+x_2^6$ does satisfy it for $\kappa$ sufficiently small. In this
sense the geometric threshold $p^*(\kappa)$, as a decreasing function of
$\kappa$, genuinely restricts which powers $|f|^{-\kappa}$ of a given
singular divisor fall within the scope of
\cite[Thm.~3.1]{suleiman2023existence}; kernels or exponents with
$p^*(\kappa)\le1$ (Regime C of Remark~\ref{rem:three_regimes}) are outside
that scope, and would require a different functional framework---this
restriction has nothing to do with the choice of $\lambda$, which is free
throughout the whole admissible range once (i) holds.
\end{remark}

\subsection{A worked verification: the Newtonian kernel}
\label{subsec:morrey_bridge_check}

We illustrate Corollary~\ref{cor:morrey_bridge} on the classical Newtonian
kernel $K_{\mathrm{Newt}}(x)\sim|x|^{2-d}$ in $\mathbb R^d$ ($n=d$), suitably
truncated to be smooth and compactly supported in gradient outside a fixed
neighborhood of the origin (recall from the discussion after
Hypothesis~\ref{hyp:dominant} that the exact power-law kernel cannot itself
be globally $L^1$-gradient, being homogeneous). We stress that we are
\emph{not} asserting that the untruncated global kernel
$|x|^{2-d}$ satisfies the $L^1(\mathbb R^n)$ hypothesis of
\cite[Thm.~3.1]{suleiman2023existence}---it manifestly does not, by
homogeneity. What we certify is that its local singular germ at the origin
meets the exact divisorial criterion $p^*(\kappa)>1$, and that this local
certification, combined with the mild global truncation constructed in
Corollary~\ref{cor:morrey_bridge}, produces a kernel to which
\cite[Thm.~3.1]{suleiman2023existence} literally applies. Locally near the origin,
$K_{\mathrm{Newt}}$ is recovered in Section~\ref{subsec:newtonian} as
$f=|x|^2$, $\kappa=(d-2)/2$, with
\[
p^*(\kappa)=\frac{d}{d-1},
\qquad
\mathrm{rlct}_0(f)=\frac d2,
\qquad
\overline{p}(\kappa)=\frac{\mathrm{rlct}_0(f)}{\kappa+1}=1
\]
(from $\nu_E=2$, $a_E=d-1$, $M_E=1$, and $\kappa+1=d/2$).

\emph{RLCT-only sufficient condition (ii).} This reads $1<\overline{p}(\kappa)=1$,
which fails (with equality, not strict inequality): the RLCT bound alone is
borderline and does \emph{not} certify Regime A for the Newtonian kernel.

\emph{Exact threshold (i).} Since $d\ge3$ gives $p^*(\kappa)=d/(d-1)\in(1,3/2]$,
condition (i) of Corollary~\ref{cor:morrey_bridge} holds strictly for every
$d\ge3$. By part (iii), \cite[Thm.~3.1]{suleiman2023existence} therefore
applies to the (truncated) Newtonian kernel for \emph{every} admissible
choice of $\lambda\in[0,n-1)$ and compatible $q$---there is no delicate
window in $\lambda$ to locate, because the Morrey parameters play no role in
condition (i) at all. This is exactly the phenomenon anticipated in
Theorem~\ref{thm:admissibility}(iii) and Remark~\ref{rem:RLCT_role}: the
exact divisorial threshold, not the RLCT (which here only reaches the
borderline value $\overline{p}=1$), is what certifies the classical theory.

\emph{A genuinely anisotropic example.} For contrast, consider
$f=x_1^4+x_2^6$ in $\mathbb R^2$, for which Section~\ref{sec:examples}
computes $p^*(\kappa)=5/(12\kappa+3)$ (isolated real zero, so no
strict-transform contribution). Condition (i) of Corollary~\ref{cor:morrey_bridge}
requires $1<5/(12\kappa+3)$, i.e.\
\[
\kappa \;<\; \frac16.
\]
For $\kappa=1$ (the value used for the table in Section~\ref{sec:examples}),
this fails, confirming the failure recorded in
Remark~\ref{rem:regime_C_examples}; for, say, $\kappa=1/10<1/6$, one has
$p^*(1/10)=5/4.2\approx1.19>1$, so Regime A holds and
\cite[Thm.~3.1]{suleiman2023existence} applies to the (truncated) kernel
$|f|^{-1/10}$ for every admissible $\lambda\in[0,n-1)$. This illustrates
concretely that $p^*(\kappa)$ determines a threshold exponent $\kappa$ below
which a given power of a singular divisor enters the scope of
\cite[Thm.~3.1]{suleiman2023existence}, entirely independently of the
Morrey scaling parameter $\lambda$ of the solution space.

Methods from resolution of singularities have long been used to analyze and
control analytic behavior. A classical example is Varchenko's work on Newton
polyhedra \cite{varchenko1976newton}, which shows how resolution data can be
used to describe decay properties of oscillatory integrals. This perspective
was later refined by Greenblatt \cite{greenblatt2005sharp} and
Kamimoto--Nose \cite{kamimoto2007toric}, among others. Related ideas also
appear in motivic integration \cite{denef1998motivic} and in the study of
zeta functions \cite{phong2012algebraic}, where resolution data again plays a
central role.

The log canonical threshold arises naturally in several areas, including
birational geometry \cite{defernex2010bounds}, the theory of plurisubharmonic
functions \cite{demailly2001semicontinuity}, and the study of multiplier ideals
\cite{blickle2004informal,ein2004jumping}. In particular, Musta\c{t}\u{a}'s
valuative characterization \cite{mustata2002singularities}, together with the
asymptotic invariants introduced by Jonsson--Musta\c{t}\u{a}
\cite{jonsson2012valuations}, provides a bridge between these geometric ideas
and the analytic integrability conditions considered here.

More recently, Collins--Greenleaf--Pramanik \cite{collins2016multidimensional}
developed multidimensional resolution techniques, while Phong--Stein--Sturm
\cite{phong1999growth} studied growth properties of real-analytic functions.
Our approach follows this general geometric perspective, adapted to the setting
of nonlocal evolution equations.

\section{Examples: a proof of concept for the admissibility criterion}
\label{sec:examples}

A key advantage of the resolution--based admissibility criterion developed in
Section~\ref{sec:rlct_morrey} is its sensitivity to the anisotropic geometry of
the interaction potential. While many classical kernels studied in the PDE
literature are isotropic and admit radial majorants of the form
$K(x)\sim |x|^{-\alpha}$, aggregation models arising in heterogeneous or
structured environments naturally lead to interaction potentials with strongly
anisotropic singularities.

From a purely analytic viewpoint, such anisotropy is difficult to capture using
isotropic Morrey or Hardy--Littlewood--Sobolev estimates. From a geometric
viewpoint, however, anisotropic concentration phenomena are precisely those
encoded by the Newton polyhedron and, more generally, by the resolution data
of the defining function $f$.

Theorem~\ref{thm:admissibility} shows that the exact $L^p_{\mathrm{loc}}$
threshold of $\nabla K$---and hence, via Corollary~\ref{cor:morrey_bridge},
Morrey admissibility---is organized by resolution--theoretic data associated
with the singularity of the interaction
kernel, rather than by isotropic upper bounds, so that directional
interaction kernels are treated by the same criterion as isotropic ones.

\medskip
We now present a collection of examples illustrating
Theorem~\ref{thm:admissibility} for Morrey
admissibility. Rather than testing kernels against a range of Morrey norms,
each example is treated as a case study in which the Newton polyhedron identifies
the dominant resolution data and, consequently, the admissible singularity regime:

\begin{quote}
\emph{Once the weights and the weighted degree of the dominant quasi-homogeneous
part of $f$ are identified, the admissible integrability range for $\nabla K$
is determined by explicit resolution--theoretic quantities: the exceptional
contribution $|w|/(\kappa d+w_{\max})$ of Proposition~\ref{prop:quasihomog},
together with the contribution $1/(\kappa+1)$ of the smooth part of the real
zero set whenever the latter is not reduced to the origin. The Newton
polyhedron computes the RLCT and hence the computable lower bound
$\overline{p}(\kappa)$, which in all the singular examples below is strictly
smaller than the exact threshold.}
\end{quote}

\begin{remark}[Scope relative to Regime A]
\label{rem:section3_scope}
We record here, once and for all, a point already anticipated in
Remark~\ref{rem:regime_C_examples}: at $\kappa=1$, most of the exact
thresholds $p^*(1)$ computed below are smaller than $1$ (see the tables in
Sections~\ref{subsec:model_family}--\ref{subsec:mixed_example}), and hence,
by Corollary~\ref{cor:morrey_bridge}, the corresponding kernels $K=1/f$ do
\emph{not} satisfy the global $L^1(\mathbb R^n)$ hypothesis of
\cite[Thm.~3.1]{suleiman2023existence} at $\kappa=1$---and this failure has
nothing to do with the choice of Morrey parameters $\lambda$, since
condition (i) of Corollary~\ref{cor:morrey_bridge} does not involve
$\lambda$ at all. This is not a defect of the examples: they are included to
exhibit the geometric mechanism---the interplay of $\nu_i$, $M_i$, $a_i$,
and the Newton polyhedron---in its purest anisotropic form, and, as shown in
Section~\ref{subsec:morrey_bridge_check}, the same families do enter Regime
A once $\kappa$ is taken small enough (equivalently, for a sufficiently
mild power $|f|^{-\kappa}$ of the same singular divisor). The one example
below with a built-in physical normalization---the classical Newtonian
kernel of Section~\ref{subsec:newtonian}, where $\kappa$ is fixed by the
dimension via $\kappa=(d-2)/2$---does satisfy Regime A for every $d\ge3$,
and hence \cite[Thm.~3.1]{suleiman2023existence} applies to it
\emph{simultaneously for the entire admissible range} $0\le\lambda<n-1$, as
verified in Section~\ref{subsec:morrey_bridge_check}.
\end{remark}

Throughout this section we take $\kappa=1$, i.e.\ $K=1/f$, unless stated
otherwise; the general-$\kappa$ formulas follow from
Proposition~\ref{prop:quasihomog} by the same data.

\subsection{A model family: $f(x,y)=x^m+y^n$}
\label{subsec:model_family}

We begin with the prototypical family
\[
f(x,y)=x^m+y^n, \qquad m,n\ge2,
\]
which already exhibits a wide range of anisotropic behaviors.

The Newton polyhedron $\NP(f)$ has vertices at $(m,0)$ and $(0,n)$.
The dominant face is the line segment connecting these two points.
Intersecting this face with the diagonal $(t,t)$ yields
\[
\frac{t}{m}+\frac{t}{n}=1,
\qquad\text{hence}\qquad
t=\frac{mn}{m+n}.
\]
Therefore the Newton distance is
\[
d_{\mathrm{NP}}(f)=\frac{mn}{m+n}.
\]

\begin{figure}[h]
\centering
\begin{tikzpicture}[scale=1.2]
    \begin{scope}
        \clip (0,0) rectangle (4.5,4.5);
        \fill[blue!10] (0,3.5) -- (2.5,0) -- (5,0) -- (5,5) -- (0,5) -- cycle;
    \end{scope}

    \draw[->] (0,0) -- (4.5,0) node[right] {$i$};
    \draw[->] (0,0) -- (0,4.5) node[above] {$j$};

    \draw[dashed, gray] (0,0) -- (4,4) node[right, black] {$i=j$};

    \draw[thick, red] (0,4.5) -- (0,3.5) node[left, black] {$n$}
          -- (2.5,0) node[below, black] {$m$} -- (4.5,0);

    \fill (0,3.5) circle (1.5pt);
    \fill (2.5,0) circle (1.5pt);

    \coordinate (T) at (1.458, 1.458);
    \fill[black] (T) circle (2pt);
    \draw[dotted] (T) -- (1.458,0) node[below] {$d_{\mathrm{NP}}$};
    \draw[dotted] (T) -- (0,1.458) node[left] {$d_{\mathrm{NP}}$};

    \node at (3,3) {$\NP(x^m+y^n)$};
\end{tikzpicture}
\caption{Newton polyhedron and distance for the family $x^m+y^n$.}
\end{figure}

In real dimension two, Theorem~\ref{thm:real-newton} implies that
\[
\mathrm{rlct}_0(f)=\frac{1}{d_{\mathrm{NP}}(f)}
=\frac{m+n}{mn},
\]
(the coordinates are already adapted since $d_{\mathrm{NP}}\ge1$ for
$m,n\ge2$), giving the computable RLCT lower bound
$\overline{p}=\tfrac12\,\mathrm{rlct}_0(f)=\tfrac{m+n}{2mn}$.

\begin{remark}[Independent check]
This value of $\mathrm{rlct}_0$ can be cross-checked directly against
Collins' own computation in the proof of his Theorem~1.3
\cite[Sect.~4]{collins2017real}, where---in the complex normalization,
which carries an extra factor of $2$---he computes, for $f_n=y^n-x^m$ and
$mn/(m+n)\ge1$, the value $c_0(f_n)=2(m+n)/(mn)$; halving to match the real
convention used throughout this paper recovers exactly
$\mathrm{rlct}_0(f)=(m+n)/(mn)$ above.
\end{remark}

Let $K=1/f$. The family is quasi-homogeneous with weights
$w=(n',m')=(n,m)/\gcd(m,n)$ and degree $d=mn/\gcd(m,n)$, so
Proposition~\ref{prop:quasihomog} yields the exceptional contribution
\[
\frac{|w|}{d+w_{\max}}
=\frac{m+n}{mn+\max(m,n)} .
\]
Moreover the real zero set of $f$ is $\{0\}$ precisely when $m$ and $n$ are
both even; otherwise it is a curve through the origin, whose smooth part
contributes the additional value $1/2$. Hence
$\nabla K\in L^p_{\mathrm{loc}}(\mathbb R^2)$ if and only if $p<p^*$, where
\[
p^*=
\begin{cases}
\dfrac{m+n}{mn+\max(m,n)} & \text{if $m,n$ even},\\[1em]
\min\!\Bigl(\dfrac{m+n}{mn+\max(m,n)},\dfrac12\Bigr) & \text{otherwise}.
\end{cases}
\]
In every case $p^*>\overline{p}$ strictly, in accordance with
Theorem~\ref{thm:admissibility}\,(iii): the divisors over the origin carry
$M_i\ge1$, so the RLCT systematically underestimates the exact threshold.

\medskip
\noindent\textbf{Representative cases.} Each exact value below is confirmed
by the anisotropic dyadic scaling of Proposition~\ref{prop:quasihomog} and,
where the real zero set is a curve, by the transversal estimate
$|\nabla K|\asymp d^{-2}$ near its smooth points.
\[
\begin{array}{c|c|c|c|c|c}
f(x,y)
& Z_{\mathbb R}(f)
& \mathrm{rlct}_0(f)
& \overline{p}=\tfrac12\,\mathrm{rlct}_0(f)
& \tfrac{m+n}{mn+\max(m,n)}
& p^*
\\ \hline
x^{2}+y^{3}
& \text{cusp curve}
& \dfrac{5}{6}
& \dfrac{5}{12}
& \dfrac{5}{9}
& \dfrac{1}{2}
\\[0.5em]
x^{4}+y^{6}
& \{0\}
& \dfrac{5}{12}
& \dfrac{5}{24}
& \dfrac{1}{3}
& \dfrac{1}{3}
\\[0.5em]
x^{3}+y^{5}
& \text{curve}
& \dfrac{8}{15}
& \dfrac{4}{15}
& \dfrac{2}{5}
& \dfrac{2}{5}
\end{array}
\]
The first line reproduces Example~\ref{ex:cusp}: the smooth part of the cusp
curve ($1/2$) dominates the exceptional divisor ($5/9$). In the third line
the situation is reversed: the exceptional contribution $2/5$ lies below the
curve contribution $1/2$ and determines the threshold. The second line has
an isolated real zero and the exceptional divisor alone decides. In all
three cases the RLCT bound $\overline{p}$ is strictly smaller than $p^*$.

Larger exponents $(m,n)$ correspond to stronger singularities of $f$ at the
origin and therefore, once the exceptional contribution dominates, to a
smaller admissible range of Morrey exponents for the velocity field
$\nabla K$.

\subsection{A mixed anisotropic example}
\label{subsec:mixed_example}

We next consider a genuinely mixed case,
\[
f(x,y)=x^2+xy^3+y^5,
\]
which illustrates how lower--order mixed terms may alter the anisotropy of level
sets without changing the dominant Newton geometry.

The Newton polyhedron has vertices at $(2,0)$, $(1,3)$, and $(0,5)$.
Note that $(1,3)$ lies strictly above the segment connecting $(2,0)$ and $(0,5)$,
meaning it does not belong to the boundary of the convex hull that intersects
the diagonal. The dominant face is thus the segment joining $(2,0)$ and $(0,5)$.
Intersecting this face with the diagonal $(t,t)$ yields
\[
\frac{t}{2}+\frac{t}{5}=1,
\qquad\text{so that}\qquad
t=\frac{10}{7} \approx 1.43.
\]
Hence
\[
d_{\mathrm{NP}}(f)=\frac{10}{7},
\qquad
\mathrm{rlct}_0(f)=\frac{7}{10}.
\]

\begin{figure}[h]
\centering
\begin{tikzpicture}[scale=1.0]
    \begin{scope}
        \clip (0,0) rectangle (5,6);
        \fill[blue!10] (0,5) -- (2,0) -- (5,0) -- (5,6) -- (0,6) -- cycle;
    \end{scope}

    \draw[->] (0,0) -- (5,0) node[right] {$i$};
    \draw[->] (0,0) -- (0,6) node[above] {$j$};

    \draw[dashed, gray] (0,0) -- (4.5,4.5) node[right, black] {$i=j$};

    \draw[thick, red] (0,6) -- (0,5) node[left, black] {$(0,5)$}
          -- (2,0) node[below, black] {$(2,0)$} -- (5,0);

    \fill (0,5) circle (2pt);
    \fill (2,0) circle (2pt);
    \fill (1,3) circle (2pt) node[above right] {$(1,3)$};

    \coordinate (D) at (1.428, 1.428);
    \fill[black] (D) circle (2pt);
    \draw[dotted] (D) -- (1.428,0) node[below] {$\frac{10}{7}$};
    \node[above right, scale=0.9] at (D) {$(d_{\mathrm{NP}}, d_{\mathrm{NP}})$};

    \node at (3.5,4.5) {$\NP(f)$};
\end{tikzpicture}
\caption{Newton polyhedron for $f(x,y)=x^2+xy^3+y^5$. The mixed term $(1,3)$ is redundant for the calculation of $d_{\mathrm{NP}}$.}
\end{figure}

As in the previous family, the given coordinates are already adapted in the
sense of Theorem~\ref{thm:real-newton} (consistently with $d_{\mathrm{NP}}=10/7\ge1$),
so the RLCT bound gives the sufficient condition
$p<\overline{p}=\tfrac12\,\mathrm{rlct}_0(f)=\tfrac{7}{20}$.

The exact threshold is larger, and certifying it rigorously requires two
separate mechanisms, since the mixed monomial $xy^3$ leaves the dominant
exceptional contribution unchanged but must be treated separately near the
real zero curve.

\emph{Mechanism 1: away from the zero set of the principal part.} The
function is semi-quasi-homogeneous with principal part $x^2+y^5$ for the
weights $w=(5,2)$ and degree $d=10$: the mixed monomial $xy^3$ has weighted
degree $11>10$, so it is subordinate to the grading wherever the principal
part $x^2+y^5$ itself does not vanish to higher order, and there the
exceptional contribution of the weighted blow-up coincides with that of the
principal part alone, computed via Proposition~\ref{prop:quasihomog}. The
exceptional data are
\[
\nu_E=10,\qquad a_E=|w|-1=6,\qquad
M_E=d-w_{\max}=5,
\]
the latter realized by $\partial_x f=2x+y^3$, of weighted order
$5$; hence this mechanism gives the contribution
\[
\frac{a_E+1}{2\nu_E-M_E}=\frac{7}{20-5}=\frac{7}{15}.
\]

\emph{Mechanism 2: near the real zero curve $\{f=0\}$.} On the strict
transform of the real curve $\{f=0\}$ itself (solving the quadratic in $x$
shows real branches for $y\le0$, reduced and smooth away from the origin),
the principal part $x^2+y^5$ vanishes and the mixed term $xy^3$ is no
longer subordinate to it, so this contribution is \emph{not} read off the
weighted blow-up of the principal part; it is instead governed by the
transverse analysis of Lemma~\ref{lem:chain_rule} along the (reduced,
$\nu=1$, $M=0$) strict-transform component, giving the smooth-part ratio
$1/(\kappa+1)=1/2$.

Since $7/15<1/2$, Mechanism~1 realizes the divisorial minimum, and
\[
\nabla K\in L^p_{\mathrm{loc}}(\mathbb R^2)
\quad\Longleftrightarrow\quad
p<p^*=\min\Bigl(\tfrac{7}{15},\tfrac12\Bigr)=\frac{7}{15}.
\]

We caution that the estimate $|f|\sim\varepsilon^{10}$ under the weighted
scaling $x=\varepsilon^5X$, $y=\varepsilon^2Y$ cannot be asserted uniformly
on the full annulus $\{\varepsilon\lesssim|(x,y)|_w\lesssim2\varepsilon\}$:
on the zero set of the principal part, $X^2+Y^5=0$, the leading term
cancels and $f$ reduces to the subordinate monomial $\varepsilon^{11}XY^3$,
so $|f|$ is of strictly higher order than $\varepsilon^{10}$ near that
locus. The scaling estimate $|f|\sim\varepsilon^{10}$---and the resulting
contribution $7/15$---is uniformly valid only away from a shrinking
neighbourhood of $\{X^2+Y^5=0\}$; the complementary region near that locus
is exactly Mechanism~2, and since it produces the strictly larger ratio
$1/2$, it does not affect the minimum. It is this two-mechanism argument,
rather than a single uniform scaling estimate over the whole annulus, that
rigorously certifies $p^*=7/15$.

This example shows that mixed monomials can substantially modify the
anisotropy of the singularity while leaving the dominant Newton face
unchanged, and again exhibits the strict gap
$\overline{p}=\tfrac{7}{20}<\tfrac{7}{15}=p^*$ between the RLCT prediction
and the exact divisorial threshold.

\subsection{Isotropic and near--isotropic families}
\label{subsec:isotropic_families}

We briefly summarize several instructive families, emphasizing how the admissible
range interpolates between isotropic and strongly anisotropic regimes.

\begin{itemize}
\item \textbf{Isotropic case:} $f=x^a+y^a$, $a\ge2$.
The polyhedron is symmetric with respect to the diagonal, and the distance is
simply half the degree:
\[
d_{\mathrm{NP}}(f)=\frac{a}{2}, \qquad \mathrm{rlct}_0(f)=\frac{2}{a},
\qquad \overline{p}=\frac{1}{a}.
\]
The exact threshold follows from Proposition~\ref{prop:quasihomog} with
$w=(1,1)$, $d=a$, $w_{\max}=1$:
\[
p^*=\frac{2}{a+1},
\]
both for $a$ even (isolated real zero) and for odd $a\ge3$, where the line
$x=-y$ contributes $1/2\ge2/(a+1)$. The simplest instance $a=2$,
$f=x^2+y^2$, already exhibits the strict gap: $|\nabla K|\asymp r^{-3}$ in
polar coordinates gives $p^*=2/3$, while $\overline{p}=1/2$; the discrepancy
is due to $M_E=1\ge1$ on the exceptional divisor of the blow-up
(Theorem~\ref{thm:admissibility}\,(iii)).

\begin{figure}[h!]
\centering
\begin{tikzpicture}[scale=0.8]
    \begin{scope}
        \clip (0,0) rectangle (4,4);
        \fill[blue!10] (0,3) -- (3,0) -- (4,0) -- (4,4) -- (0,4) -- cycle;
    \end{scope}
    \draw[->] (0,0) -- (4,0) node[right] {$i$};
    \draw[->] (0,0) -- (0,4) node[above] {$j$};
    \draw[dashed, gray] (0,0) -- (3.5,3.5) node[right, black] {$i=j$};
    \draw[thick, red] (0,4) -- (0,3) node[left, black] {$a$} -- (3,0) node[below, black] {$a$} -- (4,0);
    \coordinate (D) at (1.5,1.5);
    \fill (D) circle (2pt);
    \draw[dotted] (D) -- (1.5,0) node[below] {$\frac{a}{2}$};
\end{tikzpicture}
\caption{Isotropic case: the diagonal hits the midpoint of the face.}
\end{figure}

\item \textbf{Degenerate-polyhedron case:} $f=x+y^n$ (with $n$ large).
The face is very ``steep'', pulling the intersection point
$d_{\mathrm{NP}}=n/(n+1)$ below $1$---a signal that the coordinates are
\emph{not} adapted in the sense of Theorem~\ref{thm:real-newton}
(Remark~\ref{rem:adapted_coords}). Indeed, $\nabla f=(1,ny^{n-1})$ never
vanishes: $f$ is smooth, its zero set is a smooth curve, and
\[
\mathrm{rlct}_0(f)=1,
\qquad
p^*=\overline{p}=\frac12
\quad(\kappa=1),
\]
by Theorem~\ref{thm:admissibility}\,(iii); the naive substitution of
$d_{\mathrm{NP}}<1$ into the RLCT formula would give the incorrect value
$(n+1)/n>1$. This family is the equality case of the RLCT bound: the
threshold is governed entirely by the smooth reduced zero set
($\nu=1$, $M=0$, $a=0$), and no exceptional divisor intervenes.

\begin{figure}[h!]
\centering
\begin{tikzpicture}[scale=0.8]
    \begin{scope}
        \clip (0,0) rectangle (4,4);
        \fill[blue!10] (0,3.5) -- (1,0) -- (4,0) -- (4,4) -- (0,4) -- cycle;
    \end{scope}
    \draw[->] (0,0) -- (4,0) node[right] {$i$};
    \draw[->] (0,0) -- (0,4) node[above] {$j$};
    \draw[dashed, gray] (0,0) -- (3.5,3.5) node[right, black] {$i=j$};
    \draw[thick, red] (0,4) -- (0,3.5) node[left, black] {$n$} -- (1,0) node[below, black] {$1$} -- (4,0);
    \coordinate (D) at (0.77,0.77);
    \fill (D) circle (2pt);
    \node[below right, scale=0.8] at (D) {$d_{\mathrm{NP}}$};
\end{tikzpicture}
\caption{Degenerate-polyhedron case $f=x+y^n$: the diagonal meets $\NP(f)$ at $d_{\mathrm{NP}}=n/(n+1)<1$, indicating non-adapted coordinates; the germ is in fact smooth.}
\end{figure}

\item \textbf{Near--unidirectional case:} $f=x^m+y$.
This is the dual of the previous case, where the polyhedron is stretched along
the $i$-axis; here $d_{\mathrm{NP}}=m/(m+1)<1$, again signalling non-adapted
coordinates. As before, $\nabla f=(mx^{m-1},1)$ never vanishes, $f$ is
smooth, and
\[
\mathrm{rlct}_0(f)=1,
\qquad
p^*=\overline{p}=\frac12
\quad(\kappa=1).
\]

\begin{figure}[h!]
\centering
\begin{tikzpicture}[scale=0.8]
    \begin{scope}
        \clip (0,0) rectangle (4,4);
        \fill[blue!10] (0,1) -- (3.5,0) -- (4,0) -- (4,4) -- (0,4) -- cycle;
    \end{scope}
    \draw[->] (0,0) -- (4,0) node[right] {$i$};
    \draw[->] (0,0) -- (0,4) node[above] {$j$};
    \draw[dashed, gray] (0,0) -- (3.5,3.5) node[right, black] {$i=j$};
    \draw[thick, red] (0,4) -- (0,1) node[left, black] {$1$} -- (3.5,0) node[below, black] {$m$} -- (4,0);
    \coordinate (D) at (0.77,0.77);
    \fill (D) circle (2pt);
    \node[above left, scale=0.8] at (D) {$d_{\mathrm{NP}}$};
\end{tikzpicture}
\caption{Near-unidirectional case $f=x^m+y$: the polyhedron flattens against the axis, and again $d_{\mathrm{NP}}<1$ reflects non-adapted coordinates for a smooth germ.}
\end{figure}
\end{itemize}

These families illustrate how Morrey admissibility varies with the
degree of anisotropy encoded in the Newton polyhedron in the genuinely
singular regime, and how the framework degenerates gracefully to the value
$1/(\kappa+1)$---governed by the smooth part of the zero set---as the
singularity disappears.
\subsection{Comparison with the classical Newtonian kernel}
\label{subsec:newtonian}

As a benchmark, consider the classical Newtonian interaction kernel
\[
K_{\mathrm{Newt}}(x)\sim |x|^{2-d}, \qquad d\ge3,
\]
for which
\[
|\nabla K_{\mathrm{Newt}}(x)|\sim |x|^{1-d}.
\]
It is well known that
\[
\nabla K_{\mathrm{Newt}}\in L^p_{\mathrm{loc}}
\quad\Longleftrightarrow\quad
p<\frac{d}{d-1}.
\]

This classical threshold is recovered \emph{exactly} by
Theorem~\ref{thm:admissibility}. Indeed,
$K_{\mathrm{Newt}}=|f|^{-\kappa}$ for
\[
f(x)=|x|^2,
\qquad
\kappa=\frac{d-2}{2},
\]
which satisfies Hypothesis~\ref{hyp:dominant} with this value of $\kappa$:
$|\nabla f|\,|f|^{-(\kappa+1)}=2|x|\cdot|x|^{-d}=2|x|^{1-d}$. The blow-up
of the origin in $\mathbb R^d$ log-resolves $f$ and its Jacobian ideal
$\mathcal J_f=(x_1,\dots,x_d)$, with a single exceptional divisor $E$
carrying
\[
\nu_E=2,\qquad a_E=d-1,\qquad M_E=1 .
\]
Hence
\[
p^*(\kappa)=\frac{a_E+1}{(\kappa+1)\nu_E-M_E}
=\frac{d}{2\kappa+1}
=\frac{d}{d-1},
\]
in perfect agreement with the classical computation. Note the essential role
of the corrected gradient order: $M_E=1$ (the Jacobian ideal is the maximal
ideal, of order one along $E$), consistent with
$0\le M_E\le\nu_E-1=1$ from Lemma~\ref{lem:chain_rule}. The RLCT here is
$\mathrm{rlct}_0(|x|^2)=(a_E+1)/\nu_E=d/2$ (the real zero set is the origin
alone, so no strict-transform component caps the threshold at $1$;
cf.\ Remark~\ref{rem:rlct_divisorial}), and, since $\kappa+1=d/2$, the RLCT
bound reads
\[
\overline{p}(\kappa)
=\frac{\mathrm{rlct}_0(f)}{\kappa+1}
=\frac{d/2}{d/2}
=1
\;<\;\frac{d}{d-1}
=p^*(\kappa),
\]
strictly smaller than the exact threshold, as always in the isolated-zero
regime.

The strength of the resolution--based approach thus lies both in this exact
consistency with the classical isotropic theory and in its robustness under
anisotropic and heterogeneous perturbations: in the latter settings, where
no radial structure is available, Newton geometry and divisorial data
provide an immediate and systematic identification of the admissible
singularity regime without requiring case-by-case analysis.

\section*{Conclusion}

This work establishes a complete geometric characterisation of the exact
local integrability threshold of $\nabla K$ for singular interaction
kernels: under the two-sided comparability assumption of
Hypothesis~\ref{hyp:dominant}, $\nabla K\in
L^p_{\mathrm{loc}}$ if and only if
$p < p^*(\kappa):=\min_i (a_i+1)/\bigl((\kappa+1)\nu_i - M_i\bigr)$, where the
divisorial data $(\nu_i, M_i, a_i)$ arise from a real log-resolution of $f$
together with its Jacobian ideal, and the minimum runs over exceptional
divisors and strict-transform components alike. The real log-canonical
threshold of $f$ provides the computable lower bound
$\overline p(\kappa):=\mathrm{rlct}_0(f)/(\kappa+1)$ for this exact
threshold, with equality $p^*(\kappa)=\overline p(\kappa)$ if and only if
some divisor computing $\mathrm{rlct}_0(f)$ has $M_i=0$---necessarily a
reduced, generically smooth component of the strict transform of the real
zero set---and strict inequality throughout the genuinely singular
regime---in particular for every isolated zero in dimension at least two,
including the (quasi-homogeneous and semi-quasi-homogeneous) examples
treated in Section~3. For
quasi-homogeneous singularities the exact threshold takes the closed form
$|w|/(\kappa d+w_{\max})$, possibly capped by the smooth-part value
$1/(\kappa+1)$, and the classical Newtonian kernel is recovered exactly as
the isotropic instance $f=|x|^2$, $\kappa=(d-2)/2$.

We have further made explicit, in Section~\ref{sec:morrey_bridge}, the
precise bridge between this purely geometric $L^p_{\mathrm{loc}}$ statement
and the well-posedness theorem of \cite[Thm.~3.1]{suleiman2023existence}.
As stated there, the only requirement on the
kernel is the \emph{global} condition $\nabla K\in L^1(\mathbb R^n)$,
entirely independent of the Morrey scaling parameter $\lambda$ of the
solution space; we showed that $p^*(\kappa)>1$ is the exact local
condition for the singular part of $K$ to contribute $\nabla K\in
L^1_{\mathrm{loc}}$, and that this local condition, together with a mild
global truncation of $K$ away from its singularity
(Corollary~\ref{cor:morrey_bridge}), yields the global hypothesis
$\nabla K\in L^1(\mathbb R^n)$ needed for
\cite[Thm.~3.1]{suleiman2023existence}, with the computable sufficient
condition
$\kappa+1<\mathrm{rlct}_0(f)$. We verified this bridge explicitly for the
Newtonian kernel, where the RLCT bound is exactly borderline while the
exact threshold certifies \cite[Thm.~3.1]{suleiman2023existence} for every
$d\ge3$ and, once certified, simultaneously for the \emph{entire} admissible
range of $\lambda$---and we noted (Remark~\ref{rem:regime_C_examples}) that
several of the anisotropic examples of Section~\ref{sec:examples} only
enter this regime for sufficiently mild exponents $\kappa$, again
independently of $\lambda$. The full nonlinear well-posedness conclusion for
the aggregation equation~\eqref{eq:agg} then follows by invoking
\cite[Thm.~3.1]{suleiman2023existence} as a black box, which we do not
reprove here; the contribution of the present paper is the geometric input
to that step, made precise in Corollary~\ref{cor:morrey_bridge}.

The exact $L^p_{\mathrm{loc}}$ threshold is thus resolution-theoretic in
nature: it is controlled not by the RLCT alone but by the divisorial geometry
of the pair $(f,\mathcal J_f)$, with the gradient orders $M_i$ measuring
precisely how far the sharp threshold exceeds the RLCT prediction. The RLCT
remains a robust, computationally accessible proxy that captures the leading
concentration mechanism near the singular set. This perspective clarifies
why admissible regimes are stable under anisotropic perturbations, and why
isotropic and genuinely anisotropic kernels fit within a common geometric
framework.

The resolution-based approach developed here suggests several directions for
further study. One concerns aggregation dynamics on singular or stratified
spaces, where resolution invariants may encode the geometry of concentration
sets. Another concerns gradient-flow models in optimal transport
\cite{figalli2010new}, where anisotropic scaling properties may be analyzed
through Newton polyhedra. Related questions also arise in flocking and alignment
models with directionally dependent interactions \cite{shvydkoy2021global}.

From an algebraic perspective, valuative methods \cite{jonsson2012valuations}
may help refine admissibility criteria, while the ACC property for log-canonical
thresholds \cite{hacon2014acc} suggests the possibility of uniform geometric
bounds in families of models. The connection with multiplier ideals
\cite{blickle2004informal} raises a related question on the analytic side,
namely whether analogous functional structures can be associated with Morrey
admissibility classes.

Beyond the examples considered here, it would be natural to extend this
framework to matrix-valued kernels, measure-driven dynamics, and more general
singular configurations. Kernels whose singular behavior is not modeled by a
single analytic divisor remain open, and will likely require new analytical
tools together with a more refined interaction between resolution data and
stratified geometry.


\subsection*{Acknowledgment}
The first author would like to thank the S\~ao Paulo Research Foundation
(FAPESP) for support under grant 2019/21181-0 and Coordena\c{c}\~ao de
Aperfei\c{c}oamento de Pessoal de N\'ivel Superior -- Brasil (CAPES) for
support through the MATH-AmSud program, Grant No.\ 88881.179491/2025-01.

\subsection*{Author contributions}
Both authors contributed equally to this manuscript.

\subsection*{Data availability}
No datasets were generated or analyzed during the current study.

\subsection*{Declarations}
\textbf{Competing interests.}
The authors declare that they have no competing interests.

\end{document}